\documentclass{elsarticle}
\usepackage[english]{babel}
\usepackage{a4wide}
\usepackage{mathtools}      
\usepackage{amstext, amsmath, amssymb}
\usepackage{nccmath}
\usepackage{graphicx}
\usepackage{latexsym}
\usepackage{booktabs}
\usepackage{url}
\usepackage{import}
\usepackage{fancyvrb}
\usepackage{stmaryrd}
\usepackage{pdfsync}
\usepackage{algorithm}
\usepackage{tikz}
\usepackage{pgfplots}
\usepackage{pgfplotstable}
\pgfplotsset{compat=newest}
\usepackage{subcaption}
\usepackage[nice]{nicefrac}
\usepackage{tabularx}
\usepackage{todonotes}
\usepackage{enumitem}
\biboptions{numbers,square,sort&compress}

\usepackage{array}
\usepackage{verbatim}

\usepackage[plainpages=false]{hyperref}
\hypersetup{
		bookmarksopen=true,
		bookmarksnumbered=true,
		pdfborder={0 0 0},
		colorlinks=true,
		linkcolor=blue,
		urlcolor=blue	
              }

\newcommand{\RR}{\mathbb{R}}

\newcommand{\PP}{\mathbb{P}}
\newcommand{\st}{\;|\;}

\newcommand{\bfx}{\boldsymbol{x}}

\newcommand{\bfbeta}{\boldsymbol{\bfbeta}}

\newcommand{\bfzero}{\boldsymbol{0}}

\newcommand{\mcP}{\mathcal{P}}

\newcommand{\mcF}{\mathcal{F}}

\newcommand{\mcA}{\mathcal{A}}

\newcommand{\mcT}{\mathcal{T}}

\newcommand{\mcO}{\mathcal{O}}

\newcommand{\tn}{|\mspace{-1mu}|\mspace{-1mu}|}

\newcommand{\jump}[1]{[#1]}
\newcommand{\mean}[1]{\{{#1} \}}
\newcommand{\avg}[1]{\{{#1} \}}

\newcommand{\wavg}[1]{\{{#1} \}_{\omega}}
\newcommand{\wavgd}[1]{\langle {#1} \rangle_{\omega}}

\newcommand{\foralls}{\forall\,}

\newcommand{\dx}{\,\mathrm{d}x}

\newcommand{\mcTone}{\mcT_{h,1}}

\newcommand{\nablan}{\dfrac{\partial}{\partial n}}
\newcommand{\partialbar}{{\partial}}

\newcommand{\onehalf}{\nicefrac{1}{2}}

\DeclareMathOperator{\dist}{dist}
\DeclareMathOperator{\diam}{diam}

\DeclareMathOperator{\atantwo}{atan_2}

\DefineVerbatimEnvironment{code}{Verbatim}{frame=single,rulecolor=\color{blue}}

\numberwithin{equation}{section}
\numberwithin{figure}{section}
\numberwithin{table}{section}

\newtheorem{theorem}{Theorem}[section]
\newtheorem{proposition}[theorem]{Proposition}
\newtheorem{lemma}[theorem]{Lemma}
\newtheorem{assumption}{Assumption}
\newtheorem{corollary}[theorem]{Corollary}

\newdefinition{remark}[theorem]{Remark}
\newdefinition{definition}[theorem]{Definition}
\newproof{proof}{Proof}

\numberwithin{equation}{section}

\journal{arXiv}

\begin{document}
\begin{frontmatter}
  
  \title{A stabilized cut discontinuous Galerkin framework: I. Elliptic boundary value and interface problems.}

\author[umu]{Ceren G\"urkan}
\ead{ceren.gurkan@umu.se}

\author[umu]{Andr\'e Massing\corref{cor1}}
\ead{andre.massing@umu.se}
\cortext[cor1]{Corresponding author}

\address[umu]{Department of Mathematics and Mathematical Statistics, Ume{\aa} University, SE-90187 Ume{\aa}, Sweden}

\begin{abstract}
  We develop a stabilized cut discontinuous Galerkin framework for the
  numerical solution of elliptic boundary value and interface problems
  on complicated domains.  The domain of interest is embedded in a
  structured, unfitted background mesh in $\mathbb{R}^d$, so that the
  boundary or interface can cut through it in an arbitrary
  fashion.  The method is based on an unfitted variant of the
  classical symmetric interior penalty method using
  piecewise discontinuous polynomials defined on the background
  mesh. Instead of the cell agglomeration technique commonly used in
  previously introduced unfitted discontinuous Galerkin methods, we
  employ and extend ghost penalty techniques from recently
  developed continuous cut finite element
  methods, which allows for a minimal
  extension of existing fitted discontinuous Galerkin software to
  handle unfitted geometries.
  Identifying four abstract assumptions on the ghost penalty, we
  derive geometrically robust a priori error and condition number
  estimates for the Poisson boundary value problem which hold
  irrespective of the particular cut configuration. Possible
  realizations of suitable ghost penalties are discussed.
  We also demonstrate how the framework can be elegantly applied to
  discretize high contrast interface problems.
  The theoretical results are illustrated by
  a number of numerical experiments for various approximation orders
  and for two and three-dimensional test problems.
  
\end{abstract}

\begin{keyword}
  Elliptic problems \sep discontinuous Galerkin
  \sep cut finite element method \sep stabilization \sep condition number \sep a priori error estimates
\end{keyword}
\end{frontmatter}

\section{Introduction}
\subsection{Background}
A fundamental prerequisite for the finite element based numerical
solution of partial differential equations (PDEs) is the generation of
high quality meshes to resolve geometric domain features and to ensure
a sufficiently accurate approximation of the unknown solution.  But
despite continuously growing computer power, the generation of meshes
for realistic, complex three-dimensional domains can still be a
challenging task that can easily account for large portions of the
time, human and computing resources in the overall simulation work
flow.  In applications where the domain geometry is main subject of
interest, e.g., in shape optimization
problems~\cite{AllaireDapognyFrey2014,BurmanElfversonHansboEtAl2018,BernlandWadbroBerggren2017},
the need of frequent remeshing can be the major computational cost.
When the domain boundary is exposed to large or even topological
changes, for instances in large deformation fluid-structure
interaction problems \cite{TezduyarSatheKeedyEtAl2006} or multiphase
flows
\cite{AlbertOneil1986,GrosReicheltReusken2006,MarchandiseRemacle2006,GanesanTobiska2009},
even modern mesh moving algorithms may break down and then a costly
remeshing is the only resort.  Even if the domain of interest is
stationary but rather complex, creating a 3D high quality mesh is a
computationally demanding task.  For instance, the simulation of
geological flow and transport problems requires a series of highly
non-trivial preprocessing steps to transform geological image data
into conforming domain discretizations which respect intricate geometric
structures such as faults and large-scale networks of
fractures~\cite{DassiPerottoFormaggiaEtAl2014}.  Similar non-trivial
preprocessing steps are necessary when mesh-based domain descriptions
are generated from biomedical image data, e.g., when creating flow
models~\cite{AntigaPeiroSteinman2009}, bone
models~\cite{UralZiouposBuchananetal2011} or tissue
models~\cite{CattaneoZunino2014a}.  As a possible remedy to the mesh
generation challenges, so-called \emph{unfitted} finite elements
methods have gained much attention in recent years.  The fundamental
idea is to avoid creating meshes fitted to the domain boundary by simply
embedding the domain inside an easy-to-create background
mesh. This way, the geometry description is decoupled from the
numerical approximation and hence, complex static or evolving
geometry can be handled.

\subsection{Earlier work}
Starting with~\citep{DinhGlowinskiHeEtAl1992}, Glowinski et al.\
presented several fictitious domain formulations for the finite
element method
\citep{GlowinskiPanPeriaux1994,GlowinskiPanPeriaux1995,GlowinskiKuznetsov2007}
with applications to electromagnetics~\citep{DinhGlowinskiHeEtAl1992},
elliptic equations~\citep{GlowinskiKuznetsov2007}, and mainly fluid
related equations
\citep{DinhGlowinskiHeEtAl1992,Glowinski1997,GlowinskiPanPeriaux1999,GlowinskiPanHeslaEtAl1999,GlowinskiPanHeslaEtAl2001},
focusing on flows around moving rigid bodies.

In \cite{MoesDolbowBelytschko1999}, Mo\"es et al.\ 
introduced the so-called eXtended
finite element method (XFEM) to avoid remeshing during crack propagation
by representing discontinuities within an single mesh element,
Based on the partition of unity method (PUM)~\cite{MelenkBabuska1996},
the polynomial spaces in the elements cut by the crack are enriched.
The idea was later picked up by many authors and extended to a variety
of applications, including two-phase
flows~\cite{ChessaBelytschko2003}, dendritic
solidification~\cite{ZabarasGanapathysubramanianTan2006}, shock
capturing problems~\cite{AbbasAlizadaFries2009}, fluid-structure
interaction problems~\cite{GerstenbergerWall2008}, and flow and
transport in fractured porous media~\cite{Fumagalli2012,
  FumagalliScotti2014}.  For an early overview over XFEM and its
applications, the reader is referred to~\cite{FriesBelytschko2000} and
the references therein.

Parallel to the XFEM methodology, ~\citet{HansboHansbo2002} proposed
an alternative unfitted finite element formulation to treat elliptic
interface problems by imposing weak discontinuities within the
elements using a variant of Nitsche's method~\cite{Nitsche1971}.
Optimal a priori error and a posteriori error estimates were derived,
independent of the interface position.  Soon after, the idea was
extended to composite grids~\cite{HansboHansboLarson2003} and to
linear elasticity problems with strong and weak
discontinuities~\cite{HansboHansbo2004}.
Later~\citet{AreiasBelytschko2006} showed that the approach
in~\cite{HansboHansbo2004} can be recast into a XFEM formulation with
a Heaviside enrichment.  To deal with incompressible elasticity, the
approach from~\cite{HansboHansbo2004} was extended in
\cite{BeckerBurmanHansbo2009} by considering a stabilized mixed
formulation using $\PP_1$ continuous displacements and $\PP_0$
discontinuous pressures. A critical ingredient in the analysis was the
extension of the jump penalty based pressure stabilization from the
``physical'' part of the faces to the entire faces, leading to the
first ``ghost penalty'' stabilized unfitted finite element
formulation.  The key idea of employing ghost penalties to extend the
control of the relevant norms from the physical domain to the entire
active mesh then crystallized in a series of papers
\cite{BurmanHansbo2010,Burman2010,BurmanHansbo2012} proposing Lagrange
multiplier and Nitsche-based, optimally convergent fictitious domain
methods for the Poisson problem.  Additionally, the condition numbers
of the associated system matrices turned out to be insensitive to the
particular cut configuration and scaled similiar with respect to the
mesh size as their fitted mesh counterparts.

Building upon and extending these ideas, the cut finite element method
(CutFEM) as a particular unfitted finite element framework has gained
rapidly increasing attention in the science and engineering community,
see~\cite{BurmanClausHansboEtAl2014,BordasBurmanLarsonEtAl2018} for
some recent overviews.
A distinctive feature of the
CutFEM approach is that it provides a general, theoretically founded
stabilization framework which, roughly speaking, transfers stability
and approximation properties from a finite element scheme posed on a
standard mesh to its cut finite element counterpart.
As a result, a wide range of problem classes
has been treated 
including, e.g.,
elliptic interface problems~\cite{BurmanZunino2012,GuzmanSanchezSarkis2015,BurmanGuzmanSanchezEtAl2017},
Stokes and Navier-Stokes type problems
\cite{BurmanHansbo2013,MassingLarsonLoggEtAl2013,BurmanClausMassing2015,
CattaneoFormaggiaIoriEtAl2014,MassingSchottWall2017,WinterSchottMassingEtAl2017,KirchhartGrosReusken2016,GrosLudescherOlshanskiiEtAl2016,GuzmanOlshanskii2016},
two-phase and fluid-structure
interaction
problems~\cite{SchottRasthoferGravemeierEtAl2015,GrosReicheltReusken2006,GrossReusken2011,MassingLarsonLoggEtAl2015}.
Building up on the seminal work by
~\citet{OlshanskiiReuskenGrande2009,OlshanskiiReusken2010},
stabilized CutFEMs and so-called TraceFEMs were also developed for 
surface and surface-bulk
PDEs~\cite{BurmanHansboLarson2015,
BurmanHansboLarsonEtAl2016,
BurmanHansboLarsonEtAl2016c,
HansboLarsonMassing2017a,
GrandeLehrenfeldReusken2016,
HansboLarsonZahedi2016,Reusken2014,OlshanskiiReusken2014}.
As a natural application area,
unfitted finite element methods have also been proposed
for problems in fractured porous media~\cite{FlemischFumagalliScotti2016,Fumagalli2012,
  DAngeloScotti2012,FormaggiaFumagalliScottiEtAl2013}.
Finally, we also mention the finite cell method as another important instance of an unfitted finite element
framework with
applications to flow and mechanics problems~\cite{ParvizianDuesterRank2007,
XuSchillingerKamenskyEtAl2015,VarduhnHsuRuessEtAl2016},
see~\cite{SchillingerRuess2014} for a review and references therein.

In addition to the aforementioned unfitted \emph{continuous} finite
element methods, unfitted \emph{discontinuous} Galerkin methods have
successfully been devised to treat boundary and interface problems on
complex and evolving
domains~\cite{BastianEngwer2009,BastianEngwerFahlkeEtAl2011,Saye2015},
including flow problems with moving boundaries and
interfaces~\cite{SollieBokhoveVegt2011,HeimannEngwerIppischEtAl2013,Saye2017,MuellerKraemer-EisKummerEtAl2016,KrauseKummer2017}.
In contrast to stabilized continuous cut finite element methods, in
unfitted discontinuous Galerkin methods, troublesome small cut
elements can be merged with neighbor elements with a large
intersection support by simply extending the local shape functions
from the large element to the small cut element.  As inter-element
continuity is enforced only weakly, no additional measures need to be
taken to force the modified basis functions to be globally continuous.
Consequently, cell merging in unfitted discontinuous Galerkin methods
provides an alternative stabilization mechanism to ensure that the
discrete systems are well-posed and well-conditioned.  For a very
recent extension of the cell merging approach to continuous finite
elements, we refer to~\cite{BadiaVerdugoMartin2017,BadiaVerdugo2017a}.
Thanks to their favorable conservation and stability properties,
unfitted discontinuous Galerkin methods remain an attractive
alternative to continuous CutFEMs, but some drawbacks are the almost
complete absence of numerical analysis except
for~\cite{Massjung2012,JohanssonLarson2013}, the implementational
labor to reorganize the matrix sparsity patterns when agglomerating
cut elements, and the lack of natural discretization approaches for
PDEs defined on surfaces.

Finally, we also point out that alternative discontinuous Galerkin method for PDEs on complicated domains
have been devised by exploiting the possibility to use
non-standard element geometries which allows for a more flexible
meshing of complicated geometries, see, e.g.,~\cite{AntoniettiCangianiCollisEtAl2016,
AntoniettiFacciolaRussoEtAl2016a,
GianiHouston2014a}.

\subsection{Novel contributions and outline of this paper}
In this work, we initiate the development of a novel \emph{stabilized}
cut discontinuous Galerkin (cutDG) framework by extending the
stabilization techniques from the continuous CutFEM approach to
discontinuous Galerkin based discretizations.
Such an approach allows for a minimally invasive
extension of existing fitted discontinuous Galerkin software to
handle unfitted geometries.
Only additional quadrature routines need to be added
to handle the  numerical integration on cut geometries, and while not being a completely trivial implementation
task, we refer to the 
numerous quadrature algorithms capable of higher order geometry approximation
~\cite{MuellerKummerOberlack2013,Saye2015,Lehrenfeld2016,FriesOmerovic2015,FriesOmerovicSchoellhammerEtAl2017}
which have been proposed in the last 5 years.
Additionally, with a suitable choice of the ghost penalty,
the sparsity pattern associated with the final system matrix does not require any
manipulation and is identical to its fitted DG counterpart.

To lay out the
theoretical foundations in the most simple setting, we here introduce
and analyze cutDGMs for elliptic boundary and interface problems in
Section~\ref{sec:bound-value-probl} and
Section~\ref{sec:interface-problems}, respectively.
Boundary and
interface conditions are imposed weakly using Nitsche's method, and
the discrete bilinear forms are augmented with an abstract ghost
penalty stabilization.
Hyperbolic and advection dominant advection-diffusion-reaction problems
are considered in~\cite{GuerkanMassing2018a},
while in the upcoming work~\cite{GuerkanMassing2018b}, we will combine the presented framework
with extension of~\cite{BurmanHansboLarsonEtAl2016a,Massing2017}
to introduce cutDGM for mixed-dimensional, coupled problems.

We start with the Poisson boundary problem as
model problem in Section~\ref{ssec:model-problem-bvp} followed by the
presentation of a symmetric interior penalty based cutDGM in
Section~\ref{ssec:cutDGM-bvp}.  In the course of our stability and a
priori error analysis of the cutDGM for the Poisson boundary problem
in Section~\ref{ssec::stab-prop}--\ref{ssec::apriori}, we identify two
abstract assumptions on the ghost penalty to derive geometrically
robust and optimal approximation properties.  In contrast to
continuous cutFEMs, those do not automatically guarantee that the condition number
of the resulting linear system is insensitive to the particular cut
configuration. We find two additional abstract assumptions, allowing
us to prove geometrically robust condition number estimates in
Section~\ref{ssec::condition-number-est}.  Afterwards in
Section~\ref{ssec:ghost-penalty-realizations}, we discuss a number of
possible ghost penalty realizations which satisfy our abstract
assumptions for the considered piecewise polynomial space of order
$p$. The discussion of cutDGM for the Poisson boundary problem
concludes with a series of numerical results in two and three
dimensions to corroborate our theoretical findings and to examine the
effects and properties of the ghost penalties in detail, see
Section~\ref{ssec:ghost-penalty-realizations}. Finally, in
Section~\ref{sec:interface-problems}, we demonstrate how the framework
can easily be used to devise cutDGM for high contrast interface
problems. After the presentation of the interface model
and the corresponding cutDGM in Section~\ref{ssec:ifp:model-problem}
and Section~\ref{ssec:cutDGM-if}, respectively,
we derive optimal a priori
estimates in a concise manner in Section~\ref{sec:ip-stabilityprop},
followed by a number of convergence rate experiments for low and high
contrast interface problems presented in
Section~\ref{ssec:ifp:numerical-examples-ifp}.

\section{Elliptic boundary value problems} 
\label{sec:bound-value-probl} 
\subsection{Basic notation}
Throughout this work, $\Omega \subset \RR^d$, $d = 2,3$ denotes an
open and bounded domain\footnote{The precise
  regularity assumptions on $\Omega$ will be stated at the end of
  Subsection~\ref{ssec:cutDGM-bvp}}
with piecewise smooth boundary
$\partial \Omega$, while $\Gamma$ denotes a piecewise smooth manifold
of codimension $1$ embedded into $\RR^d$.
For $U \in \{\Omega, \Gamma \}$
and $ 0 \leqslant m < \infty$, $1 \leqslant q \leqslant \infty$, let
$W^{m,q}(U)$ be the standard Sobolev spaces consisting of those
$\RR$-valued functions defined on $U$ which possess $L^q$-integrable
weak derivatives up to order $m$. Their associated norms are denoted
by $\|\cdot \|_{m,q,U}$.  As usual, we write $H^m(U) = W^{m,2}(U)$ and
$(\cdot,\cdot)_{m,U}$ and $\|\cdot\|_{m,U}$ for the associated inner
product and norm. If unmistakable, we occasionally write
$(\cdot,\cdot)_{U}$ and $\|\cdot \|_{U}$ for the inner products and
norms associated with $L^2(U)$, with $U$ being a measurable subset of
$\RR^d$.
Any norm $\|\cdot\|_{\mcP_h}$ used in this work which
involves a collection of geometric entities $\mcP_h$ should be
understood as broken norm defined by
$\|\cdot\|_{\mcP_h}^2 = \sum_{P\in\mcP_h} \|\cdot\|_P^2$ whenever
$\|\cdot\|_P$ is well-defined, with a similar convention for scalar
products $(\cdot,\cdot)_{\mcP_h}$.  Any set operations
involving $\mcP_h$ are also understood as element-wise operations,
e.g., $ \mcP_h \cap U = \{ P \cap U \st P \in \mcP_h \} $ and $
\partial \mcP_h = \{ \partial P \st P \in \mcP_h \} $ allowing for
compact short-hand notation such as
$ (v,w)_{\mcP_h \cap U} = \sum_{P\in\mcP_h} (v,w)_{P \cap U} $ and
$ \|\cdot\|_{\mcP_h\cap U} = \sqrt{\sum_{P\in\mcP_h} \|\cdot\|_{P\cap
    U}^2}$.
Finally, throughout this work, we use the notation $a \lesssim b$ for
$a\leqslant C b$ for some generic constant $C$ (even for $C=1$) which varies with the context
but is always independent of the mesh size $h$ and the position of
$\Gamma$ relative to the background $\mcT_h$.

\subsection{Poisson problem}
\label{ssec:model-problem-bvp}
We consider the following model boundary value problem: given $f\in H^1(\Omega)$ and $g\in H^{1/2}(\Gamma)$, find
$u : \Omega \to \RR$ such that
\begin{subequations}
  \label{eq:laplace-strong}
  \begin{align}
    -\Delta u &= f \quad \mbox{in } \Omega,
                                   \label{eq:laplace-strong-pde}
    \\
    u & = g  \quad \mbox{on } \Gamma.
        \label{eq:laplace-strong-dirichlet}
  \end{align}
\end{subequations}
Setting 
$V_{g} = \{v \in H^1(\Omega) : v|_{\Gamma}=g \}$
and defining the bilinear and linear forms
\begin{align}
  a(u,v)=(\nabla u, \nabla v)_{\Omega},
          \qquad
  l(v) = (f,v)_{\Omega},
  \label{eq:cont-forms}
\end{align}
the weak or variational formulation of the strong problem~\eqref{eq:laplace-strong-pde} 
is to seek $u \in V_{g}$ such that 
\begin{equation}
  a(u,v)=l(v) \quad \foralls v \in V_0.
  \label{eq:variational_BVP}
\end{equation} 
\subsection{A cut discontinuous Galerkin method for the Poisson problem}
\label{ssec:cutDGM-bvp}

Let $\widetilde{\mcT}_h$ be a quasi-uniform background mesh consisting of 
$d$-dimensional, shape-regular (closed) simplices $\{T\}$
covering $\overline{\Omega}$. As usual, we introduce the
local mesh size $h_T = \diam(T)$ and the
global mesh size $h = \max_{T \in \widetilde{\mcT}_h} \{h_T\}$.
For $\widetilde{\mcT}_h$ we define the so-called \emph{active} (background) mesh
\begin{align} 
  \mcT_h &= \{ T \in \widetilde{\mcT}_{h} \st \cap \Omega^{\circ} \neq \emptyset \},
  \label{eq:active-mesh-bvp}
\end{align}
and its submesh $\mcT_{\Gamma}$ consisting of all cut elements,
\begin{align} 
  \mcT_{\Gamma} &= \{ T \in \widetilde{\mcT}_{h} \st \cap \Gamma \neq \emptyset \}.
  \label{eq:cut-elements-mesh-bvp}
\end{align}
Note that since the elements $\{T\}$ are
closed by definition, the active mesh $\mcT_h$ still covers ${\Omega}$.
The set of interior faces in the active background mesh is given by
 \begin{align} 
   \mcF_h &= \{ F =  T^+ \cap T^- \st T^+, T^- \in \mcT_h \}.
  \label{eq:faces-interior-bvp}
 \end{align}
\begin{figure}[htb]
  \begin{center}
    \begin{minipage}[t]{0.45\textwidth}
    \vspace{0pt}
    \includegraphics[width=1.0\textwidth]{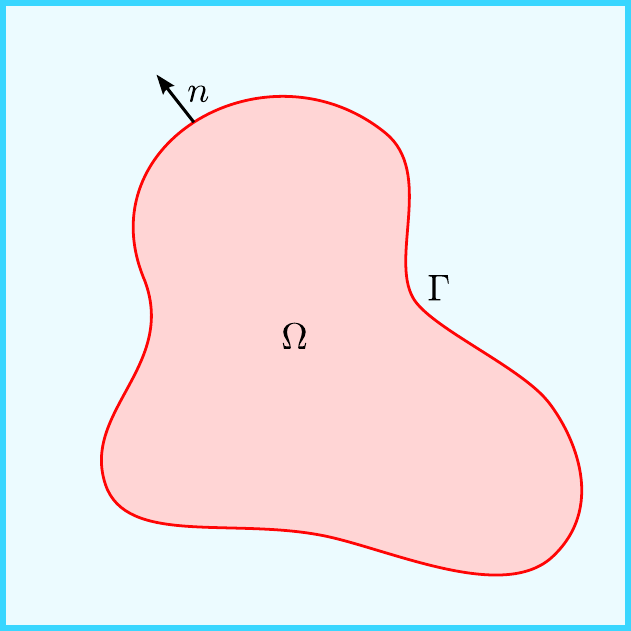}
  \end{minipage}
  \hspace{0.02\textwidth}
  \begin{minipage}[t]{0.45\textwidth}
    \vspace{0pt}
    \includegraphics[width=1.0\textwidth]{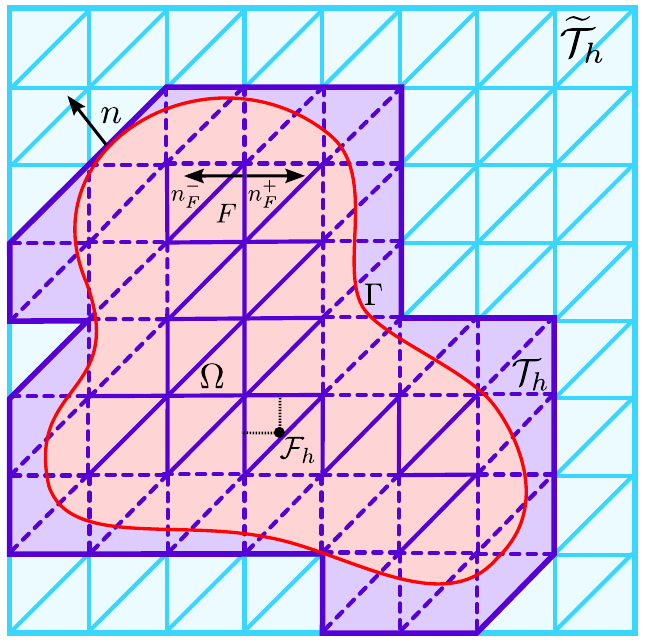}
  \end{minipage}
\end{center}
\caption{Computational domains for the boundary value problem~(\ref{eq:laplace-strong}).
  (Left) Physical domain $\Omega$ with boundary $\Gamma$ and outer normal $n$.
  (Right) Background mesh and active
  mesh used to define the approximation space.
  Faces on which the face based ghost penalty~(\ref{eq:jh-def-face-based}) is defined are
  plotted as dashed faces.}
  \label{fig:domain-set-up}
\end{figure}
On the active mesh $\mcT_h$, we define the discrete function space $V_h$ as the broken polynomial space
of order~$k$,
\begin{align}
  V_h
  \coloneqq \PP_k(\mcT_h)
  \coloneqq \bigoplus_{T \in \mcT_h} \PP_k(T).
\label{eq:Vsh-def-bvp}
\end{align}
To formulate our cut discontinuous Galerkin method for the boundary
value problem~\eqref{eq:laplace-strong}, we recall the definition of 
the averages
\begin{align}
\avg{\sigma}|_F &= \dfrac{1}{2} (\sigma_F^{+} + \sigma_F^{-}),
  \label{eq:mean-std-def-F}
  \\
  \avg{n \cdot \sigma }|_F &=
                             \dfrac{1}{2} n \cdot (\sigma_F^{+} + \sigma_F^{-}),
  \label{eq:mean-def-F}
\end{align}
and the jump across an interior face $F \in
\mcF_h$, 
\begin{align}
  \jump{w}|_F &= w_F^{+} - w_F^{-}.
\end{align}
Here,  $w(x)^\pm = \lim_{t\rightarrow 0^+} w(x \pm t n)$ for some chosen unit facet normal $n$ on $F$.
\begin{remark}
  To keep the notation at a moderate level, we usually do not use subscripts to indicate
  whether a normal belongs to a $F$ or to the boundary $\Gamma$ 
  as it will be clear from context.
\end{remark}
With these definitions in place, we can now define the discrete
counterparts to the continuous bilinear and linear form~\eqref{eq:variational_BVP}
and set
\begin{align}
  a_h(v,w) &= (\nabla v, \nabla w)_{\mcT_h\cap\Omega} 
             - (\partial_n v, w)_{\Gamma}
             - (v, \partial_n w)_{\Gamma}
             + \beta (h^{-1} v,w)_{\Gamma} 
             \nonumber
  \\                    
           &\qquad 
             - (\mean{\partial_n v }, \jump{w})_{\mcF_h \cap \Omega}
             - (\jump{v}, \mean{\partial_n w })_{\mcF_h \cap \Omega}
             + \beta (h^{-1}\jump{v},\jump{w})_{\mcF_h \cap \Omega},
             \label{eq:ah}
  \\           
l_h(v) &= (f,v)_{\mcT_h\cap\Omega} - (\partial_n v, g)_{\Gamma}  
  +\beta (h^{-1} g, v)_{\Gamma},
             \label{eq:lh}
\end{align} 
for $v, w \in V_h$ where we used the short-hand notation $\partial_n v = n \cdot \nabla v$.
The symmetric interior penalty based  cut discontinuous Galerkin method
for the Poisson problem~(\ref{eq:laplace-strong}) then reads: find $u_h \in V_h $ such that
$\foralls v \in V_h$
\begin{align}
  A_h(u_h, v) \coloneqq a_h(u_h,v) + g_h(u_h, v) = l_h(v).
  \label{eq-cutdg-formulation}
\end{align}
In contrast to the classical symmetric interior penalty method
formulated on \emph{fitted} meshes, we here augment the bilinear form
$a_h$ with an additional stabilization form $g_h$, which is typically only active on
elements in the vicinity of the embedded boundary $\Gamma$.
The role
of this stabilization is to ensure that, irrespective of the
particular cut configuration, the bilinear form $A_h$ defined
in~\eqref{eq-cutdg-formulation} is coercive and bounded with
respect to certain discrete energy-norms, and that the system matrix
associated with $A_h$ is well-conditioned.  To obtain these properties
while maintaining the approximation qualities of original
symmetric interior penalty method, the stabilization has to satisfy
certain assumptions which we will extract from the forthcoming
numerical analysis. Concrete realizations of $g_h$ are presented
and discussed in Section~\ref{ssec:ghost-penalty-realizations}.
\begin{remark}
  The idea of augmenting an unfitted finite element scheme by
  certain stabilization forms acting in the vicinity of the boundary $\Gamma$
  was first formulated in~\cite{BurmanHansbo2012,Burman2010}
  in the context of Nitsche-based fictitious domain methods for the Poisson problem
  employing continuous, piecewise polynomial ansatz functions.
  As realizations of $g_h$ typically require
  the evaluation of discrete functions $v \in V_h$ \emph{outside}
  the physical domain $\Omega$, the term \emph{ghost penalty}
  was coined in~\cite{BurmanHansbo2012,Burman2010,BeckerBurmanHansbo2009}.
\end{remark}

We conclude this section by formulating a few reasonable geometric
assumptions on $\Gamma$ and $\mcT_h$, which allow us to keep the
technical details the forthcoming numerical analysis at a moderate
level.
\def\theassumption{G\arabic{assumption}}
\begin{assumption}
  \label{ass:Gamma-C2}
  The boundary $\Gamma$ is of class $C^2$.
\end{assumption}
\begin{assumption}
  The mesh $\mcT_h$ is quasi-uniform.
  \label{ass:mesh-quasi-uniformity}
\end{assumption}
Finally,
we require that
$\Gamma$ is reasonably resolved by the active mesh $\mcT_h$.
More
specifically, for a boundary $\Gamma$ of class $C^2$ and its tubular
neighborhood
$U_{\delta}(\Gamma) = \{ x \in \RR^d : \dist(\Gamma, x) < \delta\}$ of
radius $\delta$, it is well-known, that there is a $\delta_0 > 0$ such
that $\foralls x \in U_{\delta}(\Gamma)$, there is a unique closest
point $p(x)$ on $\Gamma$ satisfying $|x - p(x)| = \dist(\Gamma, x)$,
see, e.g,~\cite{GilbargTrudinger2001}[Section 14.6].  We assume that
$h < \delta_0$ such that $\mcT_{\Gamma}$ is contained in a tubular
neighborhood for which such a closest point projection $p(x)$ is
defined. In~\cite{BurmanGuzmanSanchezEtAl2017}[Proposition 3.1] it was
then shown that the following geometric assumption on the active mesh $\mcT_h$ is
satisfied:
\begin{assumption}
  \label{ass:fat-intersection-property}
  For $T\in \mcT_{\Gamma}$ there is an element $T'$ in $\omega(T)$
  with a ``fat'' intersection\footnote{The constant $c_s$ is typically
    user-defined.} such that
  \begin{align} | T' \cap \Omega |_{d}
    \geqslant c_s |T'|_d
  \label{eq:fat-intersect-prop}
 \end{align}
for some mesh independent $c_s > 0$.
Here, $\omega(T)$ denotes the set of elements sharing at least one
node with $T$.
\end{assumption}
\begin{remark}
  The assumed quasi-uniformity of $\mcT_h$ is mostly for notional
  convenience. Except for the condition number estimates, most given estimates
  can be easily localized to element or patch-wise estimates.
\end{remark}
\subsection{Stability properties}
\label{ssec::stab-prop}
We start our theoretical investigation of the proposed cutDG
method~(\ref{eq-cutdg-formulation}) by introducing various natural
discrete norms and semi-norms. For $v \in V_h$ we define
\begin{align}
  \label{eq:sp:discrete-energy-norm}
\tn  v  \tn^2_{a_h}
&= \| \nabla v \|^2_{\mcT_h \cap \Omega}
  + \|h^{-1/2} [v] \|^2_{\mcF_h \cap \Omega},
  \\
  | v |^2_{g_h}
  &= g_h(v, v),
   \\
  \tn  v  \tn^2_{A_h} &= \tn  v  \tn^2_{a_h} + |v|^2_{g_h},
\end{align}
while for $v \in H^2(\mcT_h) + V_h$, we will also consider the norm
\begin{align}
  \tn v \tn_{a_h, \ast}^2
  = \tn  v  \tn_{a_h}^2
  +  \| h^{\onehalf} \avg{\partial_n v}\|_{\mcF_h \cap \Omega}^2
  + \| h^{\onehalf} \partial_n v\|_{\Gamma}^2.
\end{align}
Next, we show that the bilinear form $A_h$ is coercive and bounded
with respect to the discrete energy norm $\tn \cdot \tn_{A_h}$. Recall
that a pivotal ingredient in the numerical analysis of classical symmetric
interior penalty method is the inverse inequality
\begin{align}
  \| \partial_n v \|_F \leqslant C_I h_T^{-\onehalf}\| \nabla v \|_T,
  \label{eq:inverse-est-normal-flux}
\end{align}
which holds for discrete functions $v \in \PP_k(T)$ and
$F\in\mcF_T \coloneqq \{ F_h \in \mcF : F \subset \partial T\}$. Here, the inverse constant
$C_I$ depends on the dimension $d$, the degree $k$ and the shape regularity of $T$.
Unfortunately, a
corresponding inverse inequality for the cut faces $F\cap\Omega \neq F$ of the form
\begin{align*}
  \| \partial_n v \|_{F\cap\Omega} \leqslant C_I h_T^{-\onehalf}\| \nabla v \|_{T \cap \Omega}
\end{align*}
\emph{does not} hold as the ratio $\tfrac{|F \cap \Omega|_{d-1}}{|T\cap\Omega|_{d}}$
between the $d-1$ dimensional surface area of the cut face $F \cap \Omega$ and
the $d$-dimensional volume of the cut element $T\cap\Omega$
is highly dependent on the cut configuration; in fact, it
can become arbitrarily large
as illustrated by the ``sliver case'' in Figure~\ref{fig:critical-cut-cases}.
\begin{figure}[htb]
  \begin{center}
  \begin{minipage}[c]{0.49\textwidth}
    \vspace{8.5pt}
    \includegraphics[width=1.0\textwidth]{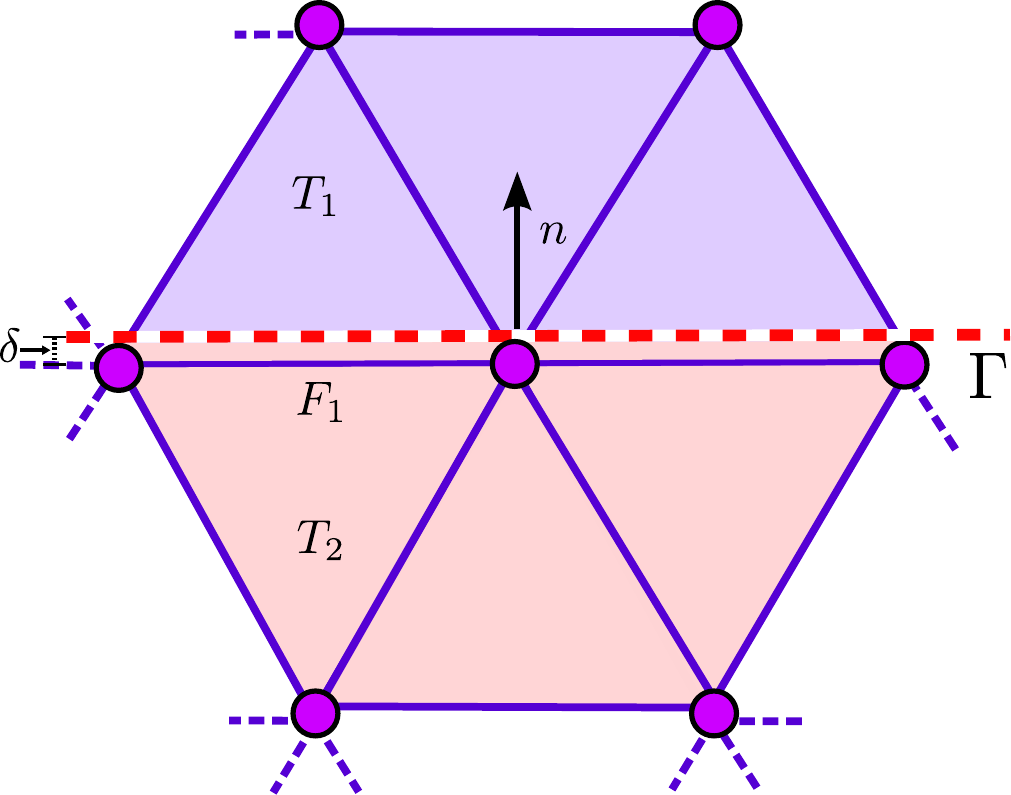}
  \end{minipage}
  \hspace{0.01\textwidth}
  \begin{minipage}[c]{0.44\textwidth}
    \vspace{0pt}
    \includegraphics[width=1.0\textwidth]{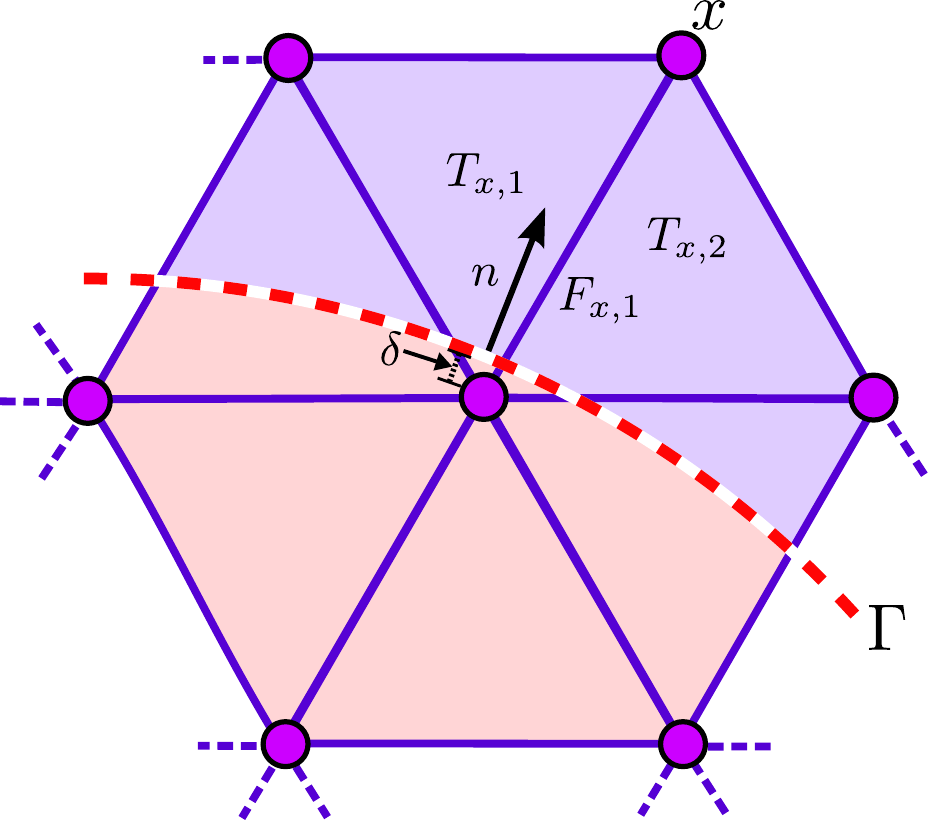}
  \end{minipage}
\end{center}
\caption{Critical cut configurations. (Left): Sliver case.  For
  $\delta \to 0$, the ratio
  $\tfrac{|\Gamma \cap T_1 |_{d-1} }{|T_1\cap \Omega|_d} \sim
  \tfrac{h^d}{\delta h^{d-1}}$ can become arbitrarily large and thus
  the corresponding local inverse constant $C$
  in~\eqref{eq:inverse-est-invalid} is practically unbounded.  A
  similar observation holds for the ratio
  $\tfrac{| F_1 |_{d-1} }{|T_1\cap \Omega|_d}$.  (Right) Dotting
  case. Observe that for the faces $F_x$ and elements $T_x$ associated
  with node $x$, the corresponding face and element related
  contributions to the stiffness matrix associated with $a_h$ become
  arbitrarily small as $\delta \to 0$. Consequently, the stiffness matrix
  is almost singular and ill-conditioned if no proper ghost penalty is added.}
  \label{fig:critical-cut-cases}
\end{figure}
As a partial replacement, one might be tempted to use simply the estimate 
\begin{align}
  \| \partial_n v \|_{F\cap \Omega}
  \leqslant 
  \| \partial_n v \|_{F}
  \leqslant C_I h_T^{-\onehalf}\| \nabla v \|_T
  \label{eq:inverse-est-normal-flux-cut-F}
\end{align}
instead. A similar issue arises when one wishes to control the normal flux on
the boundary~$\Gamma$ since an inequality of the form
  \begin{equation}
  \|\partial_n v \|_{\Gamma \cap T} \leqslant C h_T^{-\onehalf}\| \nabla v\|_{T \cap \Omega}
  \label{eq:inverse-est-invalid}
\end{equation}
cannot hold with a constant $C$ which is independent of the cut configuration.
Instead, we have only the inverse inequality
\begin{align}
  \|\partial_n v \|_{\Gamma \cap T} \leqslant C h_T^{-\onehalf}\| \nabla v\|_T
  \label{eq:inverse-est-normal-flux-Gamma}
\end{align}
at our disposal, see~\cite{HansboHansboLarson2003} for a proof.
Note that compared to constant $C_I$ in \eqref{eq:inverse-est-normal-flux-cut-F}, the constant in~\eqref{eq:inverse-est-normal-flux-Gamma} depends also on the local
curvature of $\Gamma$.
To fully exploit~(\ref{eq:inverse-est-normal-flux-cut-F}) and
(\ref{eq:inverse-est-normal-flux-Gamma}),
it is necessary to extend the control
of the $\|\nabla v\|^2_{\mcT_h\cap\Omega}$-part in natural energy norm $\tn \cdot \tn_{a_h}$
from the physical domain $\Omega$ to the entire active mesh $\mcT_h$.
This is precisely one important role of the ghost penalty term~$g_h$ which we formulate
as our first assumption on $g_h$:
\setcounter{assumption}{0}
\def\theassumption{EP\arabic{assumption}}
\begin{assumption}
  \label{ass:ghost-penalty-coerc}
  The ghost penalty $g_h$ extends the $H^1$ semi-norm
  from the physical domain $\Omega$ to the entire active mesh $\mcT_h$ in the sense that
  \begin{gather}
    \| \nabla v \|_{\mcT_h}^2
    \lesssim
    \| \nabla v \|_{\Omega}^2  + |v|_{g_h}^2
    \end{gather}
    holds for $v \in V_h$,
    with the hidden constants depending only on the dimension $d$, the polynomial order
  $k$ and the shape-regularity of $\mcT_h$.
\end{assumption}
An immediate result of our discussion is the following important corollary.
\begin{corollary}
  \label{cor:ghost-penalty-bulk}
  Let $g_h$ satisfy~\ref{ass:ghost-penalty-coerc}, then it holds that
  \begin{gather}
    \| h^{\onehalf} \partial_n  v \|_{\Gamma\cap\Omega}^2
    + \| h^{\onehalf} \partial_n v \|_{\mcF_h\cap\Omega}^2
    \lesssim
    \| \nabla v \|_{\Omega}^2  + |v|_{g_h}^2
    \lesssim \tn  v  \tn^2_{A_h} \quad \foralls v \in V_h,
    \label{eq:normal-flux-estimate}
  \end{gather}
    with the hidden constants depending only on the dimension $d$, the polynomial order
  $k$, the shape-regularity of $\mcT_h$, and the curvature of $\Gamma$. In particular, we observe that
\begin{align}
  \| v \tn_{a_h, \ast} \lesssim \| v \tn_{A_h} \quad \foralls v \in V_h.
    \label{eq:ahast-norm-est}
\end{align}
\end{corollary}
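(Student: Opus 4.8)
The plan is to establish the two inequalities in~\eqref{eq:normal-flux-estimate} separately and then combine them with the definitions of the norms.

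\textbf{Step 1: The normal flux estimate.} First I would bound $\|h^{\onehalf}\partial_n v\|_{\mcF_h\cap\Omega}^2$. On each face $F \in \mcF_h$ with $F \subset \partial T^\pm$, I use the crude inclusion $F \cap \Omega \subseteq F$ followed by the standard (fitted) inverse inequality~\eqref{eq:inverse-est-normal-flux-cut-F}, which gives $\|\partial_n v\|_{F\cap\Omega} \leqslant \|\partial_n v\|_F \leqslant C_I h_T^{-\onehalf}\|\nabla v\|_T$ on each side. Multiplying by $h^{\onehalf}$, squaring, and summing over all faces — noting that each element $T \in \mcT_h$ is counted only a bounded number of times (bounded by the maximal number of faces per simplex, hence depending only on $d$) and invoking the quasi-uniformity $h \lesssim h_T$ — yields $\|h^{\onehalf}\partial_n v\|_{\mcF_h\cap\Omega}^2 \lesssim \|\nabla v\|_{\mcT_h}^2$. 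For the boundary term $\|h^{\onehalf}\partial_n v\|_{\Gamma\cap\Omega}^2 \leqslant \|h^{\onehalf}\partial_n v\|_{\Gamma}^2$, I argue analogously, but now using the curved-boundary inverse inequality~\eqref{eq:inverse-est-normal-flux-Gamma} instead of~\eqref{eq:inverse-est-normal-flux-cut-F}; this is where the dependence on the curvature of $\Gamma$ enters the hidden constant. Since $\Gamma \cap T \neq \emptyset$ only for $T \in \mcT_\Gamma$, summing over those elements and again using quasi-uniformity gives $\|h^{\onehalf}\partial_n v\|_{\Gamma\cap\Omega}^2 \lesssim \|\nabla v\|_{\mcT_h}^2$. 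Adding the two contributions produces the bound by $\|\nabla v\|_{\mcT_h}^2$. Finally I invoke Assumption~\ref{ass:ghost-penalty-coerc} to replace $\|\nabla v\|_{\mcT_h}^2$ by $\|\nabla v\|_\Omega^2 + |v|_{g_h}^2$, which establishes the first inequality in~\eqref{eq:normal-flux-estimate}.

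\textbf{Step 2: Bounding by the energy norm.} The second inequality in~\eqref{eq:normal-flux-estimate} is immediate from the definitions: $\|\nabla v\|_\Omega^2 = \|\nabla v\|_{\mcT_h\cap\Omega}^2 \leqslant \tn v\tn_{a_h}^2$ and $|v|_{g_h}^2 \leqslant \tn v\tn_{A_h}^2$ with $\tn v\tn_{a_h}^2 \leqslant \tn v\tn_{A_h}^2$, so summing gives $\|\nabla v\|_\Omega^2 + |v|_{g_h}^2 \lesssim \tn v\tn_{A_h}^2$ (in fact with constant $1$).

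\textbf{Step 3: The $\tn\cdot\tn_{a_h,\ast}$ estimate.} To obtain~\eqref{eq:ahast-norm-est}, I expand $\tn v\tn_{a_h,\ast}^2 = \tn v\tn_{a_h}^2 + \|h^{\onehalf}\avg{\partial_n v}\|_{\mcF_h\cap\Omega}^2 + \|h^{\onehalf}\partial_n v\|_\Gamma^2$. The first term is at most $\tn v\tn_{A_h}^2$. For the averaged flux term, I bound the average pointwise by the arithmetic mean of the two one-sided traces, $|\avg{\partial_n v}| \leqslant \onehalf(|\partial_n v^+| + |\partial_n v^-|)$, so $\|h^{\onehalf}\avg{\partial_n v}\|_{\mcF_h\cap\Omega}^2 \lesssim \|h^{\onehalf}\partial_n v\|_{\mcF_h\cap\Omega}^2$ (understood as the sum of the squared one-sided contributions), which Step 1 already controls by $\tn v\tn_{A_h}^2$. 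The term $\|h^{\onehalf}\partial_n v\|_\Gamma^2$ is likewise controlled by Step 1. Summing the three pieces gives~\eqref{eq:ahast-norm-est}.

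\textbf{Main obstacle.} There is no deep difficulty here; the statement is essentially a bookkeeping consequence of the fitted inverse inequalities together with Assumption~\ref{ass:ghost-penalty-coerc}. The one point requiring care is the extension from the physical domain to the full active mesh: the naive cut inverse inequality fails (as emphasized in the preceding discussion and Figure~\ref{fig:critical-cut-cases}), so it is essential to first pass to the uncut faces and elements via the trivial inclusions $F\cap\Omega\subseteq F$ and $\Gamma\cap T\subseteq \partial(T\cap\text{tubular nbhd})$-type bounds, apply the \emph{fitted} inverse inequalities~\eqref{eq:inverse-est-normal-flux-cut-F} and~\eqref{eq:inverse-est-normal-flux-Gamma}, and only then reabsorb the resulting $\|\nabla v\|_{\mcT_h}^2$ using the ghost penalty. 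Keeping track of which constants acquire curvature dependence (only those coming from the $\Gamma$-terms) is the sole subtlety.
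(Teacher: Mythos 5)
Your proof is correct and follows exactly the route the paper intends: the paper presents this corollary as an immediate consequence of the preceding discussion, i.e.\ of the inclusions $F\cap\Omega\subseteq F$, $\Gamma\cap T\subseteq\Gamma$ together with the fitted inverse inequalities~\eqref{eq:inverse-est-normal-flux-cut-F} and~\eqref{eq:inverse-est-normal-flux-Gamma}, followed by reabsorption of $\|\nabla v\|_{\mcT_h}^2$ via Assumption~\ref{ass:ghost-penalty-coerc} and the trivial bound by $\tn\cdot\tn_{A_h}$. You have simply written out the bookkeeping (face-counting, quasi-uniformity, the average bounded by one-sided traces) that the paper leaves implicit, so there is nothing to add.
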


Having managed to control the normal flux on the cut geometries
$\mcF_h \cap \Omega$ and $\Gamma \cap \Omega$, we can now prove the
main result of this section.
\begin{proposition}
  \label{prop:Ah-coercivity}
  The discrete form $A_h$ is coercive and stable with respect
  to the discrete energy norm $\tn  \cdot  \tn_{A_h}$; that is,
  \begin{alignat}{3}
    \tn  v  \tn_{A_h}^2 &\lesssim A_h(v, v) && \quad \foralls v \in V_h,
  \label{eq:coercivity-Ah}
                   \\
                   A_h(v, w) &\lesssim
                   \tn  v \|_{A_h} \| v  \tn_{A_h} &&\quad \foralls v,w \in V_h.
                                 \label{eq:continuity-Ah}
  \end{alignat}
  {Moreover, for $v\in H^2(\mcT_h)+ V_h$ and $w\in V_h$, the discrete form $a_h$ satisfies }
  \begin{align}
    \label{eq:continuity-ah}
    a_h(v, w) &\lesssim
                \tn  v \|_{a_h,\ast} \| v  \tn_{A_h}.
  \end{align}
\end{proposition}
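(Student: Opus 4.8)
The plan is to treat the three estimates in turn, reducing coercivity and the two continuity bounds to the normal-flux control already secured in Corollary~\ref{cor:ghost-penalty-bulk}, together with the standard arsenal of DG/Nitsche analysis (Cauchy--Schwarz, Young's inequality, the inverse estimates~\eqref{eq:inverse-est-normal-flux-cut-F} and~\eqref{eq:inverse-est-normal-flux-Gamma}, and Assumption~\ref{ass:ghost-penalty-coerc}).

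First I would prove coercivity~\eqref{eq:coercivity-Ah}. Writing out $A_h(v,v) = \|\nabla v\|_{\mcT_h\cap\Omega}^2 + \beta\|h^{-1/2}v\|_{\Gamma}^2 + \beta\|h^{-1/2}\jump{v}\|_{\mcF_h\cap\Omega}^2 + |v|_{g_h}^2 - 2(\partial_n v, v)_{\Gamma} - 2(\mean{\partial_n v},\jump{v})_{\mcF_h\cap\Omega}$, the cross terms are the only obstruction. For each I apply Cauchy--Schwarz followed by Young's inequality with a parameter $\epsilon$, e.g. $2|(\partial_n v,v)_{\Gamma}| \leqslant \epsilon \|h^{1/2}\partial_n v\|_{\Gamma}^2 + \epsilon^{-1}\|h^{-1/2}v\|_{\Gamma}^2$, and similarly on the faces with $\mean{\partial_n v}$ against $\jump{v}$. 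The troublesome $\|h^{1/2}\partial_n v\|_{\Gamma}^2$ and $\|h^{1/2}\mean{\partial_n v}\|_{\mcF_h\cap\Omega}^2$ terms are absorbed using~\eqref{eq:normal-flux-estimate}, which bounds them by $C(\|\nabla v\|_{\Omega}^2 + |v|_{g_h}^2)$. Choosing $\epsilon$ small enough so that $C\epsilon < \tfrac12$, and then $\beta$ large enough relative to $\epsilon^{-1}$, leaves a positive multiple of $\|\nabla v\|_{\Omega}^2 + \|h^{-1/2}v\|_{\Gamma}^2 + \|h^{-1/2}\jump{v}\|_{\mcF_h\cap\Omega}^2 + |v|_{g_h}^2$. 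One final invocation of Assumption~\ref{ass:ghost-penalty-coerc} upgrades $\|\nabla v\|_{\Omega}^2 + |v|_{g_h}^2$ to control $\|\nabla v\|_{\mcT_h}^2 \geqslant \|\nabla v\|_{\mcT_h\cap\Omega}^2$, giving $\tn v\tn_{a_h}^2$, and adding back $|v|_{g_h}^2$ yields $\tn v\tn_{A_h}^2 \lesssim A_h(v,v)$.

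For continuity~\eqref{eq:continuity-Ah}, I bound each of the seven terms of $A_h(v,w)$ by products of factors each dominated by $\tn\cdot\tn_{A_h}$. The symmetric bulk, boundary-penalty and face-penalty terms and the $g_h$ term are immediate by Cauchy--Schwarz. For the consistency-type terms I pair $\|h^{1/2}\mean{\partial_n v}\|_{\mcF_h\cap\Omega}$ with $\|h^{-1/2}\jump{w}\|_{\mcF_h\cap\Omega}$ (and the analogous boundary pairing), then control the first factor by~\eqref{eq:normal-flux-estimate}; the mixed term with roles of $v,w$ swapped is handled identically. Here I would just cite~\eqref{eq:ahast-norm-est} to say $\tn v\tn_{a_h,\ast}\lesssim\tn v\tn_{A_h}$ so that the same argument in fact proves the sharper~\eqref{eq:continuity-ah} for $v\in H^2(\mcT_h)+V_h$, $w\in V_h$: note that in~\eqref{eq:continuity-ah} the rough argument $v$ may lie outside $V_h$, so Assumption~\ref{ass:ghost-penalty-coerc} and~\eqref{eq:normal-flux-estimate} are \emph{not} available for $v$; this is why the norm $\tn v\tn_{a_h,\ast}$ is designed to include the terms $\|h^{1/2}\mean{\partial_n v}\|_{\mcF_h\cap\Omega}^2$ and $\|h^{1/2}\partial_n v\|_{\Gamma}^2$ explicitly — one simply reads them off the definition rather than estimating them. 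The test function $w\in V_h$ still has all its flux terms controlled via~\eqref{eq:normal-flux-estimate} by $\tn w\tn_{A_h}$, and since $g_h(v,\cdot)$ does not appear in $a_h$, there is no $g_h$ contribution to worry about on the rough argument.

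The main obstacle is purely bookkeeping of the constants: one must verify that the absorption in the coercivity proof can be arranged so that the penalty parameter $\beta$ required is independent of the cut position — which it is, because the only cut-dependent quantity entering the Young's-inequality balance, namely $\|h^{1/2}\partial_n v\|_{\Gamma}^2 + \|h^{1/2}\mean{\partial_n v}\|_{\mcF_h\cap\Omega}^2$, has already been tamed in~\eqref{eq:normal-flux-estimate} with a constant depending only on $d$, $k$, shape-regularity and the curvature of $\Gamma$. Everything else is the classical fitted-DG computation.
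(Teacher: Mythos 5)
Your proposal is correct and follows essentially the same route as the paper: coercivity by expanding $A_h(v,v)$, applying Cauchy--Schwarz/Young with parameter $\epsilon$ to the consistency terms, absorbing the flux terms via Corollary~\ref{cor:ghost-penalty-bulk}, and choosing $\epsilon$ small and $\beta\gtrsim\epsilon^{-1}$; continuity by Cauchy--Schwarz term by term together with the norm bound~\eqref{eq:ahast-norm-est}, with the same observation that for $v\in H^2(\mcT_h)+V_h$ the flux terms are read off from $\tn\cdot\tn_{a_h,\ast}$ rather than estimated. The only cosmetic difference is your extra invocation of Assumption~\ref{ass:ghost-penalty-coerc} at the end of the coercivity argument, which is harmless since $\|\nabla v\|_{\mcT_h\cap\Omega}=\|\nabla v\|_{\Omega}$ already appears in the retained terms.
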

\begin{proof}
  Thanks to Corollary~\ref{cor:ghost-penalty-bulk}, the proof follows the standard arguments
  in the analysis of the classical symmetric interior penalty method.
  We start with~(\ref{eq:coercivity-Ah}).
  Setting $w = v$ in~(\ref{eq-cutdg-formulation}) and
  combining an
  $\epsilon$-Young inequality of the form $2 ab \leqslant
  \epsilon a^2 + \epsilon^{-1} b^2$ with the inverse
  estimates~(\ref{eq:inverse-est-normal-flux-cut-F}),(\ref{eq:inverse-est-normal-flux-Gamma}),
  and Corollary~\ref{cor:ghost-penalty-bulk}, we see that
  \begin{align}
    A_h(v, v)
    &= \| \nabla v \|_{\Omega}^2
      + |v|_{g_h}^2
      + \beta \|h^{-\onehalf}\jump{v}\|_{\mcF_h}^2
      + \beta \|h^{-\onehalf}v\|_{\Gamma}^2
      \nonumber
      \\
    &\quad
      - 2 (\mean{\partial_n v}, \jump{v})_{\mcF_h \cap \Omega}
      - 2 ({\partial_n v}, {v})_{\Gamma \cap \Omega}
    \\
    &\gtrsim
     \| \nabla v \|_{\Omega}^2
      + |v|_{g_h}^2
      + \beta \|h^{-\onehalf}\jump{v}\|_{\mcF_h}^2
      + \beta \|h^{-\onehalf}v\|_{\Gamma}^2
      \nonumber
      \\
    &\quad
      - \epsilon \|h^{\onehalf}\mean{\partial_n v}\|_{\mcF_h \cap \Omega}
      - \epsilon^{-1} \| h^{-\onehalf}\jump{v}\|_{\mcF_h \cap \Omega}
      - \epsilon \|h^{\onehalf}{\partial_n v}\|_{\Gamma \cap \Omega}
      - \epsilon^{-1} \| h^{-\onehalf}{v}\|_{\Gamma \cap \Omega}
    \\
    &\gtrsim
      (1-2\epsilon)
      \bigl(\| \nabla v \|_{\Omega}^2
      + |v|_{g_h}^2
      \bigr)
      + \bigl(\beta - \epsilon^{-1}\bigr)
      \bigl(
      \| h^{-\onehalf}\jump{v}\|_{\mcF_h}^2
      + \| h^{-\onehalf}{v}\|_{\Gamma}^2
      \bigr)
    \\
    &
      \gtrsim \tn  v  \tn_{A_h}^2
  \end{align}
  if we choose $\epsilon > 0$ small enough and $\beta \gtrsim \epsilon^{-1}$.
  To prove~(\ref{eq:continuity-Ah}) and~\eqref{eq:continuity-ah},
  simply apply a standard Cauchy-Schwarz inequality 
  to see that terms involving the normal fluxes are bounded by
  \begin{align}
      (\mean{\partial_n v}, \jump{w})_{\mcF_h \cap \Omega} +
    ({\partial_n v}, {w})_{\Gamma \cap \Omega}
    \lesssim
    \tn  v  \tn_{a_h,\ast}
    \tn  w  \tn_{a_h,\ast}.
  \end{align}
  Then a further application of Corollary~\ref{cor:ghost-penalty-bulk}, Eq.\eqref{eq:ahast-norm-est}
  gives the desired estimates.
  \qed
\end{proof}
\begin{remark}
  If the Dirichlet boundary
  condition~(\ref{eq:laplace-strong-dirichlet}) is replaced by a
  natural boundary condition of Neuman or Robin type, the
  \emph{continuous} cut finite element method version
  of~(\ref{eq-cutdg-formulation}) does not need any additional ghost
  penalty to guarantee discrete coercivity and optimal convergence of
  the method, but one can add a (more weakly scaled) ghost penalty to
  ensure robust condition numbers. In contrast, in the case of our cutDG formulation, a
  proper ghost penalty has to be added irrespective of the imposed
  boundary condition as the normal flux terms on cut faces also require the
  additional control formulated as Assumption~\ref{ass:ghost-penalty-coerc}.
\end{remark}

\subsection{A priori error analysis}
\label{ssec::apriori}
We turn the error analysis of
the unfitted discretization scheme~(\ref{eq-cutdg-formulation}).
  To keep the technical details at a moderate level, we assume for a
  priori error analysis that the contributions from the cut elements
  $\mcT_h \cap \Omega$, the cut faces $\mcF_h \cap \Omega$ and the
  boundary parts $\Gamma \cap \mcT_h$ can be computed exactly. For a
  thorough treatment of variational crimes arising from the
  discretization of a curved boundary 
  element methods, we refer the reader to
  \cite{LiMelenkWohlmuthEtAl2010,BurmanHansboLarsonEtAl2014,GrossOlshanskiiReusken2014}.
  
Let us review some useful inequalities needed later
and explain how to construct a suitable approximation operator.
Recall that for $v \in H^1(\mcT_h)$, the local trace inequalities of
the form
  \begin{align}
    \label{eq:trace-inequality}
    \|v\|_{\partial T}
    &\lesssim
    h_T^{-1/2} \|v\|_{T} +
    h_T^{1/2}  \|\nabla v\|_{T}
      \quad \foralls T\in \mcT_h,
    \\
    \|v\|_{\Gamma \cap T}
    &\lesssim
      h_T^{-1/2} \|v\|_{T}
      + h_T^{1/2} \|\nabla v\|_{T}
    \quad \foralls T \in \mcT_h,
    \label{eq:trace-inequality-cut}
  \end{align}
  hold, see~\cite{HansboHansboLarson2003} for a proof of the second one.
  To construct a suitable approximation operator, we depart from the $L^2$-orthogonal projection
  $\pi_h : L^2(\mcT_h) \to V_h$
  which for  $T \in \mcT_h$ and $F \in \mcF_T$ satisfies the error estimates
  \begin{alignat}{3}
    | v - \pi_h v |_{T,r}  &\lesssim h_T^{s-r} | v |_{s,T},
    && \quad
     0 \leqslant r \leqslant s,
    \\
    | v - \pi_h v |_{F,r}  &\lesssim h_T^{s-r-\onehalf} | v |_{s,T},
    && \quad 0  \leqslant r \leqslant s - 1/2,
  \end{alignat}
 whenever $v \in H^s(T)$. Now to lift a function $v \in H^s(\Omega)$ to
 $H^s(\Omega_{h}^e)$, where we for the moment use the notation 
 $\Omega_h^e = \bigcup_{T \in \mcT_h} T$,
  we recall that for Sobolev spaces $W^{m,q}(\Omega)$, $0 < m \leqslant \infty, 1 \leqslant q \leqslant \infty$,
  there is a bounded extension operator satisfying
  \begin{align}
    (\cdot)^e: W^{m,q}(\Omega) \to W^{m,q}(\Omega^e), \quad \|v^e \|_{m,q, \Omega^e} \lesssim \| v \|_{m,q,\Omega}
  \end{align}
  for $u \in W^{m,q}(\Omega)$, see~\cite{Stein1970} for a proof.
  After choosing some $\Omega^e$ such that $\Omega_{h,e} \subset \Omega^e \;\foralls h \lesssim 1$,
  we can define an ``unfitted'' $L^2$ projection variant $\pi_h^e: H^r(\Omega_h^e) \to V_h$ by setting
  \begin{align}
   \pi_h^e v \coloneqq \pi_{h} v^e.
  \end{align}
  Note that this $L^2$-projection is slightly ``perturbed'' in the
  sense that it is not orthogonal on $L^2(\Omega)$ but rather on
  $L^2(\Omega_{h}^e)$. Combining the local approximation properties of $\pi_h$ with the stability
  of the extension operator $(\cdot)^e$, we see immediately that $\pi_{h,\ast}$
  satisfies the global error estimates
  \begin{alignat}{3}
    \| v - \pi_h^e v \|_{\mcT_h,r}  &\lesssim h^{s-r} \| v \|_{s,\Omega},
    && \quad  0 \leqslant r \leqslant s,
    \label{eq:interpol-est-cut-T}
   \\
    \| v - \pi_h^e v \|_{\mcF_h,r}  &\lesssim h^{s-r-\onehalf} \| v \|_{s,\Omega},
    && \quad  0 \leqslant r \leqslant s-1/2,
    \label{eq:interpol-est-cut-F}
    \\
    \| v - \pi_h^e v \|_{\Gamma,r}  &\lesssim h^{s-r-\onehalf} \| v \|_{s,\Omega},
    && \quad  0 \leqslant r \leqslant s-1/2.
    \label{eq:interpol-est-cut-Gamma}
  \end{alignat}
  As a direct consequence, we can easily estimate the approximation error in
  the $\tn  \cdot \tn_{a_h,\ast}$-norm.
  \begin{corollary}
    \label{cor:projection-error}
    Let $u \in H^{s}(\Omega)$ and assume that $V_h = P^k(\mcT_h)$. Then
    for $r = \min\{s, k+1\}$, the approximation error of $\pi_h^e$ satisfies
    \begin{align}
    \label{eq:projection-error}
      \tn u - \pi_h^eu \tn_{a_h,\ast} \lesssim h^{r-1} \|u\|_{r,\Omega}.
    \end{align}
  \end{corollary}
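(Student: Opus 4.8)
The plan is to expand the squared norm $\tn u - \pi_h^e u\tn_{a_h,\ast}^2$ into the four contributions that define it — the broken gradient $\|\nabla(u-\pi_h^e u)\|_{\mcT_h\cap\Omega}^2$, the jump $\|h^{-\onehalf}[u-\pi_h^e u]\|_{\mcF_h\cap\Omega}^2$, and the two weighted normal-flux terms $\|h^{\onehalf}\avg{\partial_n(u-\pi_h^e u)}\|_{\mcF_h\cap\Omega}^2$ and $\|h^{\onehalf}\partial_n(u-\pi_h^e u)\|_{\Gamma}^2$ — and to bound each of them by $h^{2(r-1)}\|u\|_{r,\Omega}^2$ using the global interpolation estimates \eqref{eq:interpol-est-cut-T}--\eqref{eq:interpol-est-cut-Gamma} of $\pi_h^e$. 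These are stated with the Sobolev exponent $s$, but may be invoked with $r=\min\{s,k+1\}$ in place of $s$, since $\pi_h^e$ reproduces $\PP_k$ locally and $\|u\|_{r,\Omega}\leqslant\|u\|_{s,\Omega}$.

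Concretely, for the gradient term one first passes from $\mcT_h\cap\Omega$ to the full active mesh $\mcT_h$ by monotonicity of the $L^2$-norm and then applies \eqref{eq:interpol-est-cut-T} with derivative order $1$. For the jump term, $[u-\pi_h^e u]|_F$ is a difference of the two one-sided traces, hence controlled by $h^{-\onehalf}$ times the (two-sided) broken face $L^2$-norm over $\mcF_h$, to which \eqref{eq:interpol-est-cut-F} with derivative order $0$ applies, producing $h^{-\onehalf}\cdot h^{r-\onehalf}=h^{r-1}$. The two normal-flux terms involve only first derivatives of $u-\pi_h^e u$ restricted to $\mcF_h$ and to $\Gamma$; here \eqref{eq:interpol-est-cut-F} and \eqref{eq:interpol-est-cut-Gamma} with derivative order $1$ give $h^{\onehalf}\cdot h^{r-\threehalf}=h^{r-1}$ in each case. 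Summing the squared bounds and taking square roots yields \eqref{eq:projection-error}.

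There is no genuine obstacle here: the estimate is pure bookkeeping on top of the already established interpolation bounds. The only points that require a little care are that the weighted normal-flux terms be well-defined, which implicitly needs $u\in H^2$ in a neighbourhood of $\Gamma$, i.e.\ $r\geqslant 2$, consistent with the ambient space $H^2(\mcT_h)+V_h$ on which $\tn\cdot\tn_{a_h,\ast}$ is considered; that the jump and average quantities involve two-sided traces of the discontinuous function $\pi_h^e u$, so the broken face norms in \eqref{eq:interpol-est-cut-F} must be read as summing both one-sided contributions $\sum_{T}\sum_{F\in\mcF_T}\|\cdot\|_F$; and that invoking the interpolation estimates with the truncated index $r=\min\{s,k+1\}$ is justified by the $\PP_k$-reproduction property of $\pi_h^e$ together with $H^s(\Omega)\hookrightarrow H^r(\Omega)$.
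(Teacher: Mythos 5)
Your proposal is correct and follows essentially the same route as the paper's own proof: expand $\tn u-\pi_h^e u\tn_{a_h,\ast}^2$ into its four constituent terms and bound each by the corresponding global estimate \eqref{eq:interpol-est-cut-T}--\eqref{eq:interpol-est-cut-Gamma} with the appropriate derivative order, yielding $h^{r-1}\|u\|_{r,\Omega}$ in every case. The paper states this very tersely, so your extra remarks (truncation to $r=\min\{s,k+1\}$, two-sided traces in the broken face norms, the implicit need for $r\geqslant 2$ so the flux terms make sense) are sound clarifications rather than deviations.
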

  \begin{proof}
    Set $e^{\pi} = u - \pi_h^e u$ and recall that
    \begin{align}
      \tn u - \pi_h^eu \tn_{a_h,\ast}^2
      &= \| \nabla e^{\pi} \|^2_{\mcT_h \cap \Omega}
  + \|h^{-1/2} [e^{\pi}] \|^2_{\mcF_h \cap \Omega},
  +  \| h^{\onehalf} \avg{\partial_n e^{\pi}}\|_{\mcF_h \cap \Omega}^2
        + \| h^{\onehalf} \partial_n e^{\pi}\|_{\Gamma}^2
    \end{align}
    The first term can be simply estimated using~\eqref{eq:interpol-est-cut-T},
    while estimate~\eqref{eq:interpol-est-cut-F} gives the desired bounds for
    second. Finally, the last term can be treated by applying~\eqref{eq:interpol-est-cut-Gamma}.
  \qed
  \end{proof}
  Before we formulate the main a priori error estimate, we need to quantify how the
  additional stabilization term $g_h$ affects the consistency of our method.
  First note that we have the following weak Galerkin orthogonality.
\begin{lemma}[Weak Galerkin orthogonality]
  \label{lem:weak-galerkin-orth}
  Let $u \in H^2(\Omega)$ be the solution to~(\ref{eq:laplace-strong})
  and let $u_h$ be the solution to the discrete formulation~(\ref{eq-cutdg-formulation}).
  Then
  \begin{align}
    a_h(u - u_h, v) = g_h(u_h, v) \quad \foralls v \in V_h.
    \label{eq:weak-galerkin-orth}
  \end{align}
\end{lemma}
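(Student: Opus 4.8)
The plan is to first establish the full Galerkin consistency of the unstabilized form, namely $a_h(u,v) = l_h(v)$ for all $v \in V_h$, and then subtract the discrete equation~\eqref{eq-cutdg-formulation}. This is the standard route for symmetric interior penalty schemes; the only new bookkeeping concerns the cut geometry.

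The first step is elementwise integration by parts. Since $u \in H^2(\Omega)$ and, by Assumptions~\ref{ass:Gamma-C2}--\ref{ass:fat-intersection-property}, each cut element $T\cap\Omega$ with $T\in\mcT_h$ is a Lipschitz domain on whose boundary $\nabla u$ admits an $L^2$-trace, we may integrate by parts against $v|_T\in\PP_k(T)$ to get
\[
  (\nabla u,\nabla v)_{T\cap\Omega}
  = -(\Delta u, v)_{T\cap\Omega} + (\partial_n u, v)_{\partial(T\cap\Omega)} .
\]
The boundary $\partial(T\cap\Omega)$ splits into the face part $\bigcup_{F\in\mcF_T}(F\cap\Omega)$ and the part $\Gamma\cap T$ lying on the physical boundary. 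Summing over $T\in\mcT_h$, using that $u\in H^2(\Omega)$ forces $\jump{u}=0$ and $\jump{\partial_n u}=0$ on every interior face (so, after accounting for normal orientations, the per-element face terms collapse to $(\avg{\partial_n u},\jump{v})_{\mcF_h\cap\Omega}$) and inserting $-\Delta u = f$, we obtain
\[
  (\nabla u,\nabla v)_{\mcT_h\cap\Omega}
  = (f,v)_{\mcT_h\cap\Omega}
  + (\avg{\partial_n u},\jump{v})_{\mcF_h\cap\Omega}
  + (\partial_n u, v)_{\Gamma} .
\]

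Next, substituting this identity into the definition~\eqref{eq:ah} of $a_h(u,v)$: the term $(\avg{\partial_n u},\jump{v})_{\mcF_h\cap\Omega}$ cancels $-(\avg{\partial_n u},\jump{v})_{\mcF_h\cap\Omega}$, the term $(\partial_n u,v)_{\Gamma}$ cancels $-(\partial_n u, v)_{\Gamma}$, every remaining term containing $\jump{u}$ vanishes, and every term containing $u|_{\Gamma}$ may be replaced by $g$ by~\eqref{eq:laplace-strong-dirichlet}. What survives is exactly $a_h(u,v) = (f,v)_{\mcT_h\cap\Omega} - (\partial_n v, g)_{\Gamma} + \beta(h^{-1}g, v)_{\Gamma} = l_h(v)$ for all $v\in V_h$. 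Finally, since $u_h\in V_h$ satisfies $a_h(u_h,v) + g_h(u_h,v) = l_h(v)$, subtracting the two identities yields $a_h(u - u_h, v) = l_h(v) - \bigl(l_h(v) - g_h(u_h,v)\bigr) = g_h(u_h,v)$, which is~\eqref{eq:weak-galerkin-orth}.

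The only genuinely delicate point is the integration by parts in the first step: one must ensure the cut elements $T\cap\Omega$ are regular enough (Lipschitz, which the $C^2$ assumption on $\Gamma$ together with $h<\delta_0$ guarantees) for the divergence theorem and the trace of $\nabla u$ to be valid, and then carefully track the decomposition of $\partial(T\cap\Omega)$ and the sign conventions so that summing the per-element face contributions reproduces exactly $(\avg{\partial_n u},\jump{v})_{\mcF_h\cap\Omega}$. Once this identity is in hand, the remainder is pure cancellation in the definition of $a_h$, and the stabilization term $g_h$ never enters the consistency computation — it only reappears when subtracting the discrete equation, which is precisely the source of the ``weak'' (as opposed to exact) Galerkin orthogonality.
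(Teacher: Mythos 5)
Your proof is correct and takes the same route as the paper: the paper's proof is precisely the one-line observation that $u$ satisfies $a_h(u,v)=l_h(v)$ for all $v\in V_h$, followed by subtracting the discrete equation~\eqref{eq-cutdg-formulation}. You simply spell out the elementwise integration-by-parts argument behind that consistency observation, which is accurate and consistent with the paper's intent.
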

\begin{proof}
  Follows directly from the observation that $u$ satisfies $a_h(u, v) = l_h(v)\; \foralls v \in V_h$. 
  \qed
\end{proof}
Next, to assure that the remainder $g_h$ does not deteriorate the convergence order,
we formulate our second assumption on the ghost penalty $g_h$.
\begin{assumption}[Weak consistency estimate]
  \label{ass:ghost-penalty-consist}
  For $v \in H^s(\Omega)$ and $r = \min\{s,k+1\}$, the semi-norm $|\cdot|_{g_h}$
  satisfies the estimate
  \begin{align}
    |\pi_h^e v |_{g_h} \lesssim h^{r-1} \| v\|_{r,\Omega}.
    \label{eq:ghost-penalty-consist}
  \end{align}
\end{assumption}
With these preliminaries in place, we can state and prove the main a priori error estimates.
\begin{theorem}[A prior error estimates]
  Let $u \in H^s(\Omega)$, $s \geqslant 2$ be the solution to~(\ref{eq:laplace-strong})
  and let $u_h \in \PP_k(\mcT_h)$ be the solution to the discrete formulation~(\ref{eq-cutdg-formulation}).
  Then with $r = \min\{s, k+1\}$, the error $u - u_h$ satisfies
  \begin{align}
    \tn  u - u_h  \tn_{a_h,\ast} &\lesssim h^{r-1} \|u\|_{r,\Omega},
                                  \label{eq:apriori-est-energy} 
    \\
    \| u - u_h \|_{\Omega} &\lesssim h^{r} \|u\|_{r,\Omega}.
                                  \label{eq:apriori-est-l2} 
  \end{align}
\end{theorem}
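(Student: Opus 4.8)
The plan is a Strang-type argument for the energy estimate~\eqref{eq:apriori-est-energy}, followed by an Aubin--Nitsche duality argument for the $L^2$ estimate~\eqref{eq:apriori-est-l2}; the only non-classical ingredient is the control of the ghost-penalty inconsistency through Assumption~\ref{ass:ghost-penalty-consist}.

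\textbf{Energy estimate.} First I would split the error as $u - u_h = e^\pi + e_h$ with the interpolation error $e^\pi = u - \pi_h^e u \in H^2(\mcT_h) + V_h$ and the discrete error $e_h = \pi_h^e u - u_h \in V_h$. By the triangle inequality, Corollary~\ref{cor:projection-error}, and the norm equivalence~\eqref{eq:ahast-norm-est}, it then suffices to show $\tn e_h \tn_{A_h} \lesssim h^{r-1}\|u\|_{r,\Omega}$. Starting from coercivity~\eqref{eq:coercivity-Ah} and evaluating $A_h(e_h,e_h)$ by using that the exact solution satisfies $a_h(u,v) = l_h(v)$ for all $v \in V_h$ together with $A_h(u_h,v) = l_h(v)$ --- equivalently, invoking the weak Galerkin orthogonality of Lemma~\ref{lem:weak-galerkin-orth} --- one obtains $A_h(e_h,e_h) = -a_h(e^\pi,e_h) + g_h(\pi_h^e u,e_h)$. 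The first term is bounded by the continuity estimate~\eqref{eq:continuity-ah} by $\tn e^\pi\tn_{a_h,\ast}\tn e_h\tn_{A_h}$, and the second, via the Cauchy--Schwarz inequality for the positive semi-definite form $g_h$ followed by Assumption~\ref{ass:ghost-penalty-consist}, by $|\pi_h^e u|_{g_h}\,|e_h|_{g_h} \lesssim h^{r-1}\|u\|_{r,\Omega}\,\tn e_h\tn_{A_h}$. Dividing by $\tn e_h\tn_{A_h}$ and bounding $\tn e^\pi\tn_{a_h,\ast}$ with Corollary~\ref{cor:projection-error} proves the claim; as a by-product this also gives $|e_h|_{g_h} \lesssim h^{r-1}\|u\|_{r,\Omega}$, which is needed below.

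\textbf{$L^2$ estimate.} Next I would set up the dual problem $-\Delta z = u - u_h$ in $\Omega$, $z = 0$ on $\Gamma$; by Assumption~\ref{ass:Gamma-C2} and $H^2$ elliptic regularity, $z \in H^2(\Omega)$ with $\|z\|_{2,\Omega} \lesssim \|u - u_h\|_{\Omega}$. The key identity is adjoint consistency: for $v \in H^2(\mcT_h) + V_h$, an elementwise integration by parts combined with $\jump{z} = 0$ on $\mcF_h$ and $z|_\Gamma = 0$ makes every jump and boundary contribution in $a_h(v,z)$ collapse, leaving $a_h(v,z) = (v,-\Delta z)_\Omega = (v, u - u_h)_\Omega$; taking $v = u - u_h$ yields $\|u - u_h\|_\Omega^2 = a_h(u - u_h, z)$. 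Splitting $z = (z - \pi_h^e z) + \pi_h^e z$, the term $a_h(u - u_h, z - \pi_h^e z)$ is handled by the plain Cauchy--Schwarz continuity bound $\lesssim \tn u - u_h\tn_{a_h,\ast}\,\tn z - \pi_h^e z\tn_{a_h,\ast}$ (the same estimate as in the proof of Proposition~\ref{prop:Ah-coercivity}), which by~\eqref{eq:apriori-est-energy} and Corollary~\ref{cor:projection-error} applied to $z \in H^2(\Omega)$ is $\lesssim h^{r-1}\|u\|_{r,\Omega}\cdot h\|z\|_{2,\Omega}$; and $a_h(u - u_h, \pi_h^e z)$ is handled by Lemma~\ref{lem:weak-galerkin-orth}, giving $a_h(u - u_h, \pi_h^e z) = g_h(u_h, \pi_h^e z) \leqslant |u_h|_{g_h}\,|\pi_h^e z|_{g_h}$, where $|u_h|_{g_h} \leqslant |\pi_h^e u|_{g_h} + |e_h|_{g_h} \lesssim h^{r-1}\|u\|_{r,\Omega}$ and $|\pi_h^e z|_{g_h} \lesssim h\|z\|_{2,\Omega}$ by Assumption~\ref{ass:ghost-penalty-consist}. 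Summing these contributions gives $\|u - u_h\|_\Omega^2 \lesssim h^{r}\|u\|_{r,\Omega}\|z\|_{2,\Omega}$, and the elliptic regularity bound concludes the proof.

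\textbf{Main obstacle.} I expect the delicate point to be the adjoint-consistency identity: one has to verify that the curved boundary $\Gamma$ and the cut interior faces $\mcF_h\cap\Omega$ produce no residual terms beyond those annihilated by $\jump{z}=0$ and $z|_\Gamma=0$, which is precisely where the standing assumption of exact integration on the cut geometries enters. The second point that requires care is ensuring that the ghost-penalty inconsistency contributes only at order $h^{r}$ in both estimates, which hinges on treating $|\cdot|_{g_h}$ as a genuine semi-norm (so that Cauchy--Schwarz and the triangle inequality apply) and on the bound for $|e_h|_{g_h}$ established in the energy estimate. Everything else is a routine combination of the interpolation bounds~\eqref{eq:interpol-est-cut-T}--\eqref{eq:interpol-est-cut-Gamma}, Corollary~\ref{cor:projection-error}, and Proposition~\ref{prop:Ah-coercivity}.
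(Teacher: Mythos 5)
Your proposal is correct and follows essentially the same route as the paper: the same splitting $u-u_h=e^{\pi}+e_h$ with coercivity, weak Galerkin orthogonality, continuity~\eqref{eq:continuity-ah} and Assumption~\ref{ass:ghost-penalty-consist} for the energy estimate, and the same Aubin--Nitsche argument with the ghost-penalty terms controlled via Cauchy--Schwarz and~\eqref{eq:ghost-penalty-consist} for the $L^2$ estimate. The only cosmetic differences are that you pose the dual problem with right-hand side $u-u_h$ instead of taking the supremum over $\psi\in L^2(\Omega)$, and bound $|u_h|_{g_h}$ by a triangle inequality rather than splitting $g_h(u_h,\cdot)$ as the paper does; both are equivalent.
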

\begin{proof}
  With the ``extended'' $L^2$ projection $\pi_h^e$ and the proper
  cut variants of trace inequalities in place, the proof follows
  closely the standard arguments and is included only for completeness.
  
  {\bf Estimate \eqref{eq:apriori-est-energy}.}
  First, we decompose the total error $e = u - u_h$ into
  a discrete error $e_h = \pi_h^eu - u_h$ and a projection
  error $e_{\pi} = u - \pi_h^eu$.
  Observe that 
  $\tn  u - u_h\tn _{a_h} \leqslant \| e_{\pi} \tn_{a_h,\ast}  + \| e_h \tn_{A_h}$
  and thanks to Corollary~\ref{cor:projection-error}, it is enough to estimate
  the discrete error. Combining the coercivity result~(\ref{eq:coercivity-Ah})
  with the weak Galerkin orthogonality~(\ref{eq:weak-galerkin-orth}) and
  the boundedness~(\ref{eq:continuity-ah}) yields
  \begin{align}
    \tn  e_h  \tn_{A_h}^2
    \label{eq:discrete-error-est-step-first}
    &\lesssim 
      a_h(\pi_h^eu - u_h, e_h) + g_h(\pi_h^eu - u_h, e_h)
    \\
    &=
      a_h(\pi_h^eu - u, e_h) + g_h(\pi_h^eu, e_h)
    \\
    &\lesssim
      \bigl(
      \tn \pi_h^eu - u \tn_{a_h,\ast} + |\pi_h^eu|_{g_h}
      \bigr)
      \bigl(
      \tn e_h \tn_{a_h,\ast}
      +|e_h|_{g_h}
      \bigr)
    \\
    &\lesssim
      h^{r-1}\tn u\|_{r,\Omega} \tn e_h \tn_{A_h},
    \label{eq:discrete-error-est-step-last}
  \end{align}
  where in the last step, the projection error
  estimate~(\ref{eq:projection-error}) was used again together
  with the consistency error
  assumption~\ref{ass:ghost-penalty-consist} and the norm equivalence
  $ \tn e_h \tn_{a_h,\ast}
      +|e_h|_{g_h} \sim \tn e_h \tn_{A_h}$ valid for $e_h \in V_h$.
  Now dividing
  \eqref{eq:discrete-error-est-step-last} by $\tn e_h \tn_{A_h}$ gives the
  desired estimate for the discrete error.
  
  {\bf Estimate \eqref{eq:apriori-est-l2}.}
  As usual, we employ the Aubin-Nitsche duality trick, but we need to keep track
  of the weakly consistent ghost penalty $g_h$.
  Let $\psi \in L^2(\Omega)$, then thanks to assumption~\ref{ass:Gamma-C2},
  there is a $\phi \in H^2(\Omega) \cap H_0^1(\Omega)$
  satisfying $-\Delta \phi = \psi$ and the regularity estimate
  $\| \phi \|_{2,\Omega} \lesssim \| \psi \|_{\Omega}$. Since $\phi \in H^2(\Omega)\cap H_0^1(\Omega)$,
  an integration by parts argument shows that $(e, -\Delta \phi)_{\Omega} = a_h(e,\phi)$. 
  Hence, recalling the weak Galerkin orthogonality~(\ref{eq:weak-galerkin-orth}), we see that
  \begin{align}
    (e, \psi)_{\Omega}
      &=
    a_h(e, \phi)
    \\ 
      &=
    a_h(e, \phi - \pi_h \phi)
        + g_h(u_h, \pi_h \phi)
    \\
      &=
        a_h(e, \phi - \pi_h \phi)
        + g_h(u_h - \pi_h^e u  , \pi_h \phi)
    + g_h(\pi_h^e u , \pi_h \phi)
    \\
      &\lesssim
    \tn u - u_h \tn_{a_h,*}
        \tn \phi - \pi_h \phi \tn_{a_h,*}
        +
        \tn \pi_h^e u - u_h \tn_{a_h,*}
        | \pi_h \phi |_{g_h}
        +
        | \pi_h u |_{g_h}
        | \pi_h \phi |_{g_h}
    \\
      &\lesssim
        h^{r-1} \|u\|_{r,\Omega}
        h \| \phi \|_{2,\Omega}
      \lesssim
        h^{r} \|u\|_{r,\Omega}
        \| \psi \|_{\Omega},
  \end{align}
  where in the two last steps,  a Cauchy-Schwarz inequality
  for the symmetric bilinear forms $a_h$ and $g_h$ was combined with
  the weak consistency
  assumption~\ref{ass:ghost-penalty-consist},
  estimate~(\ref{eq:apriori-est-energy})
  and the energy-norm estimate
  for the discrete error
  $\pi_h^e u - u_h$ derived in~
    \eqref{eq:discrete-error-est-step-first}--\eqref{eq:discrete-error-est-step-last}.
  Consequently, we
  found that
  \begin{align}
     \|u-u_h\|_{\Omega}
     &= \sup_{\psi \in L^2(\Omega), \|\psi\|_{\Omega}=1}
       (e, \psi)_{\Omega}
      \lesssim
        h^{r} \|u\|_{r,\Omega}.
  \end{align}\qed
\end{proof}
\begin{remark}
  Note that since the proof of the $L^2$ error estimate is based on a
  duality argument, the employed ghost penalty is not required to
  extend the $L^2$ norm to actually guarantee optimal error estimates in the $L^2$ norm.
  \label{rem:L2convergence}
\end{remark}

\subsection{Condition number estimates}
\label{ssec::condition-number-est}
We conclude the theoretical analysis of the stabilized cutDGM for the
Poisson problem by demonstrating how the addition of a suitably
designed ghost penalty $g_h$ ensures that the condition
number of the resulting stiffness matrix can be bounded by
$\mcO(h^{-2})$, irrespective of how the boundary $\Gamma$ cuts the
background mesh $\mcT_h$.

Let $\{\phi_i\}_{i=1}^N$ be the standard piecewise polynomial basis
functions associated with $V_h = \PP_k(\mcT_h)$ so that any $v \in V_h$
can be writtes as 
$v = \sum_{i=1}^N V_i \phi_i$ with coefficients $V = \{V_i\}_{i=1}^N \in \RR^N$.
The stiffness matrix $\mcA$ is defined by the relation
\begin{align}
  ( \mcA V, W )_{\RR^N}  = A_h(v, w) \quad \foralls v, w \in
  V_h.
  \label{eq:stiffness-matrix}
\end{align}
Thanks to the coercivity of $A_h$,
the stiffness matrix $\mcA$ is a bijective linear mapping 
$\mcA:\RR^N \to \RR^N$ with its operator norm and condition number defined by
\begin{align}
  \| \mcA \|_{\RR^N}
  = \sup_{V \in \widehat{\RR}^N\setminus\bfzero}
  \dfrac{\| \mcA V \|_{\RR^N}}{\|V\|_{\RR^N}}
\quad \text{and}
\quad
  \kappa(\mcA) = \| \mcA \|_{\RR^N} \| \mcA^{-1} \|_{\RR^N},
  \label{eq:operator-norm-and-condition-number-def}
\end{align}
respectively.
Following the approach in~\cite{ErnGuermond2006}, 
a bound for the condition number can be derived
by combining three ingredients. The first one consists of
the well-known estimate
\begin{align}
  h^{d/2} \| V \|_{\RR^N}
  \lesssim
   \| v \|_{L^2(\mcT_h)}
  \lesssim
   h^{d/2} \| V \|_{\RR^N},
  \label{eq:mass-matrix-scaling}
\end{align}
which holds for any quasi-uniform mesh $\mcT_h$ and $v\in V_h$, and
allows us to pass between the discrete $l^2$ norm of 
coefficient vectors $V$ and the continuous $L^2$ norm of
finite element functions $v_h$.
Second, a discrete Poincar\'e-type estimate is
needed to pass from the $L^2$ norm to the discrete energy-norm.
Finally, we need an inverse inequality
which enables us to bound the discrete energy norm 
by the $L^2$ norm.

\subsubsection{Discrete Poincar\'e estimate}
Recall that for the standard symmetric interior penalty method on fitted, quasi-uniform meshes,
the discrete Poincar\'e inequality
\begin{align}
  \| v \|_{\mcT_h}
  \lesssim
  \| \nabla v \|_{\mcT_h} + \|h^{-\onehalf} \jump{u}\|_{\mcF_h} + \|h^{-\onehalf} u \|_{\Gamma}
  \label{eq:poinc-est-fitted}
\end{align}
holds for $v \in V_h$, see~\cite{Arnold1982,DiPietroErn2012}.
Proving such an inequality in the unfitted case
is a slightly more subtle undertaking.
First, to
apply~\eqref{eq:mass-matrix-scaling} when passing between discrete
$l^2$ and continuous $L^2$-norms, we have to work again on the
entire active mesh and not only on the physical part~$\Omega$.
In particular, $\Gamma$ does not longer constitutes the boundary of
the domain under consideration as in~\eqref{eq:poinc-est-fitted}.
Second, note that the fictitious domain
$\Omega_{h}^e$ associated with the active mesh $\mcT_h$ 
changes with decreasing mesh size.
To gain control over the $L^2(\mcT_h)$ and to be able to derive a
suitable discrete Poincar\'e inequality leads us to the next
assumption on $g_h$:
\begin{assumption}
  \label{ass:assumption-jh-l2norm-est}
  The ghost penalty $g_h$ extends the $L^2$ norm
  from the physical domain to the entire active mesh~$\mcT_h$
  in the sense that
  \begin{align}
  \label{eq:assumption-jh-l2norm-est}
    \| v \|_{\mcT_h}^2
    &\lesssim
    \| v \|_{\Omega}^2 + | v |_{g_h}^2,
  \end{align}
    holds for $v \in V_h$,
    with the hidden constants depending only on the dimension $d$, the polynomial order
    $k$ and the shape-regularity of $\mcT_h$.
  \end{assumption}
\begin{proposition}[Discrete Poincar\'e inequality]
  \label{prop:poincare-disc}
  For $v \in V_h$, it holds that
 \begin{align}
 \label{eq:poincare-disc} 
  \| v \|_{\mcT_h}
  \lesssim 
  \tn  v  \tn_{A_h}.
 \end{align}
\end{proposition}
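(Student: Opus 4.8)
The plan is to run the classical three-ingredient argument of Ern--Guermond, with the twist that the active mesh sticks out of $\Omega$. The crucial first move is to invoke Assumption~\ref{ass:assumption-jh-l2norm-est}: by~\eqref{eq:assumption-jh-l2norm-est} we have $\|v\|_{\mcT_h}^2 \lesssim \|v\|_{\Omega}^2 + |v|_{g_h}^2$, and since $|v|_{g_h}^2 \leqslant \tn v\tn_{A_h}^2$ it suffices to prove the \emph{physical} discrete Poincar\'e estimate $\|v\|_{\Omega} \lesssim \tn v\tn_{a_h}$; combining the two then yields $\|v\|_{\mcT_h}^2 \lesssim \tn v\tn_{a_h}^2 + |v|_{g_h}^2 = \tn v\tn_{A_h}^2$, which is the claim. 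This reduction is exactly the point where the $h$-dependence of the fictitious domain $\Omega_h^e$ is sidestepped, since afterwards we only ever work on the fixed domain $\Omega$.

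For the physical Poincar\'e estimate I would use an Aubin--Nitsche duality argument posed on $\Omega$, whose boundary is $\Gamma$. Let $z \in H^2(\Omega)\cap H_0^1(\Omega)$ solve $-\Delta z = v|_{\Omega}$ in $\Omega$ with $z = 0$ on $\Gamma$; by Assumption~\ref{ass:Gamma-C2} elliptic regularity gives $\|z\|_{2,\Omega} \lesssim \|v\|_{\Omega}$. Since $\mcT_h$ covers $\Omega$ we may write $\|v\|_{\Omega}^2 = (v,-\Delta z)_{\mcT_h\cap\Omega}$ and integrate by parts elementwise. Using that $z\in H^2(\Omega)$ (so $\jump{\partial_n z}=0$ on interior faces) and $z = 0$ on $\Gamma$, the interior-face and boundary contributions collapse to
\[
  \|v\|_{\Omega}^2 = (\nabla v,\nabla z)_{\mcT_h\cap\Omega} - (\jump{v},\partial_n z)_{\mcF_h\cap\Omega} - (v,\partial_n z)_{\Gamma}.
\]
Note the $\Gamma$-term survives because it carries $v$ (nonzero on $\Gamma$), not $z$. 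A Cauchy--Schwarz inequality on each term matches the three pieces $\|\nabla v\|_{\mcT_h\cap\Omega}$, $\|h^{-\onehalf}\jump{v}\|_{\mcF_h\cap\Omega}$ and $\|h^{-\onehalf}v\|_{\Gamma}$ of $\tn v\tn_{a_h}$ against $\|\nabla z\|_{\Omega}$, $\|h^{\onehalf}\partial_n z\|_{\mcF_h\cap\Omega}$ and $\|h^{\onehalf}\partial_n z\|_{\Gamma}$ respectively.

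The last point is to bound these $z$-factors by $\|z\|_{2,\Omega}$. Since $\nabla z$ is evaluated on whole background faces and on $\Gamma$, I would first extend $z$ to $z^e\in H^2(\Omega^e)$ with $\|z^e\|_{2,\Omega^e}\lesssim\|z\|_{2,\Omega}$ via the operator $(\cdot)^e$, and then apply the trace inequality~\eqref{eq:trace-inequality} to $\nabla z^e$ to get $\|h^{\onehalf}\partial_n z\|_{\mcF_h\cap\Omega}\leqslant\|h^{\onehalf}\nabla z^e\|_{\mcF_h}\lesssim\|\nabla z^e\|_{\mcT_h}+h\|\nabla^2 z^e\|_{\mcT_h}\lesssim\|z\|_{2,\Omega}$, and likewise the cut trace inequality~\eqref{eq:trace-inequality-cut} for the $\Gamma$-term. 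Collecting everything, $\|v\|_{\Omega}^2\lesssim\tn v\tn_{a_h}\,\|z\|_{2,\Omega}\lesssim\tn v\tn_{a_h}\,\|v\|_{\Omega}$, hence $\|v\|_{\Omega}\lesssim\tn v\tn_{a_h}\leqslant\tn v\tn_{A_h}$, and the opening reduction finishes the proof. I do not expect a single deep obstacle here: the reasoning is the standard DG Poincar\'e proof, and the only genuine subtleties are (i) using Assumption~\ref{ass:assumption-jh-l2norm-est} at the very start so that only a Poincar\'e estimate on $\Omega$ is needed, and (ii) pushing the dual solution through the Stein extension before invoking the (cut) trace inequalities.
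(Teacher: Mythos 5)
Your proposal is correct and follows essentially the same route as the paper: reduce to $\|v\|_{\Omega}$ via Assumption~\ref{ass:assumption-jh-l2norm-est}, then run an Aubin--Nitsche duality argument with elementwise integration by parts, Cauchy--Schwarz against the pieces of $\tn\cdot\tn_{a_h}$, and bound the dual-solution terms through the Stein extension combined with the trace inequalities~\eqref{eq:trace-inequality} and~\eqref{eq:trace-inequality-cut}. The only cosmetic difference is that the paper writes the face term with the average $\avg{\partial_n \psi}$, which coincides with your single-valued $\partial_n z$ since the dual solution is in $H^2(\Omega)$.
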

\begin{proof}
  Since $ \| v \|_{\mcT_h} \lesssim \| v \|_{\Omega} + |v|_{g_h} $ by
  Assumption~\ref{ass:assumption-jh-l2norm-est}, the main task is
  to estimate $\|v\|_{\Omega}$, which can be done by combining the
  proof given~\cite{Arnold1982} with the trace inequalities~(\ref{eq:trace-inequality}),
  (\ref{eq:trace-inequality-cut})
  and the boundedness of the extension operator
  $(\cdot)^e: W^{m,p}(\Omega) \to W^{m,q}(\RR^d)$. More specifically, 
  thanks to Assumption~\ref{ass:Gamma-C2} and the implied 
  elliptic regularity, there is a $\psi \in H^2(\Omega) \cap H^1_0(\Omega)$
  satisfying $-\Delta \psi = v$ and $\|\psi^e \|_{2,\RR^d} \lesssim \| v \|_{\Omega}$.
  Consequently,
  \begin{align}
    \|v\|_{\Omega}^2
    &= (v, -\Delta \psi)_{\Omega}
    \\
    &= (\nabla v, \nabla \psi )_{\Omega}
      -(v, \partial_n \psi )_{\Gamma}
      -(\jump{v}, \avg{\partial_n \psi} )_{\mcF_h\cap\Omega}
    \\
    &\lesssim
      \bigl(
      \| \nabla v \|_{\Omega}
      +\| h^{-\onehalf} v \|_{\Gamma}
      +\| h^{-\onehalf} \jump {v} \|_{\mcF_h \cap \Omega}
      \bigr)
      \nonumber
      \\
      &\quad\cdot
      \bigl(
      \|\nabla\psi\|_{\Omega}
      +
      \| h^{\onehalf} \partial_n \psi \|_{\Gamma}
      +
      \| h^{\onehalf} \avg{\partial_n \psi}\|_{\mcF_h \cap \Omega}
      \bigr)
    \\
    &\lesssim
      \tn v \tn_{a_h}
      \bigl(
      \|\nabla\psi^e\|_{\mcT_h} +  h \| D^2 \psi^e \|_{\mcT_h}
      \bigr)
      \\
    &\lesssim \tn v \tn_{a_h} \|v\|_{\Omega}
  \end{align}
  and thus $ \| v \|_{\mcT_h} \lesssim \| v \|_{\Omega} + |v|_{g_h} \lesssim
      \tn v \tn_{a_h} + |v|_{g_h} \sim \tn v \tn_{A_h}$ which gives the desired bound.
  \qed
\end{proof}
\begin{remark}
  We point out that in \emph{continuous} cut finite element
  formulations for the Poisson problem as proposed in~\cite{BurmanHansbo2012,Burman2010},
  Assumption~\ref{ass:assumption-jh-l2norm-est} and thus
  Proposition~\ref{prop:poincare-disc} are automatically
  satisfied when Assumption~\ref{ass:ghost-penalty-coerc}
  holds,
  thanks to a standard Poincar\'e inequality of the form
  $\| v \|_{\Omega} \lesssim \|\nabla v \|_{\Omega} + \| v \|_{\Gamma}$
  valid for $H^1$ conform elements.
  \label{rem:difference-cutDG-cutFEM-I}
\end{remark}

\subsubsection{Inverse inequalities}
We note first that, similar to~\eqref{eq:inverse-est-normal-flux-Gamma}
and~\eqref{eq:inverse-est-normal-flux-cut-F}, we have the
following inverse estimates for $v\in \PP_k(T)$ and $F \in \mcF_T$,
\begin{gather}
  \| \nabla  v \|_{T \cap \Omega}
  \lesssim
  \| h^{-1}v \|_{T},
  \qquad
  \| v \|_{\Gamma \cap T}
  \lesssim
  \| h^{-\onehalf}u \|_{T},
  \qquad
  \| u \|_{F \cap \Omega}
  \lesssim
  \| h^{-\onehalf}u \|_{T}.
  \label{eq:inverse-est-cut-grad-and-trace}
\end{gather}
To prove the desired inverse estimate for the energy norm $\tn \cdot \tn_{A_h}$,
it is natural to require that the ghost penalty itself satisfies the same type of inverse inequality:
\begin{assumption}
  \label{ass:inverse-estimate-jh}
  For $v\in V_h$ it holds that
\begin{align}
  |v|_{g_h} \lesssim h^{-1}\| v \|_{\mcT_h},
  \label{eq:inverse-estimate-jh}
\end{align}
with the hidden constant independent of the particular configuration.
\end{assumption}
Then combining the inverse estimates~\eqref{eq:inverse-est-cut-grad-and-trace}
with Assumption~\ref{ass:inverse-estimate-jh}, it is straightforward to prove the next lemma.
\begin{lemma}[Inverse estimate for $\tn \cdot \tn_{A_h}$]
  \label{leminverse-estimate-Ah}
  For $v \in V_h$, it holds
  \begin{align}
    \tn  v  \tn_{A_h} \lesssim h^{-1} \| v \|_{\mcT_h}
  \label{eq:inverse-estimate-Ah}
  \end{align}
  with the hidden constant only depending on the dimension $d$, the polynomial degree $k$
  and the shape regularity of $\mcT$, but not on the particular cut configuration.
\end{lemma}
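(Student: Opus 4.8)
The plan is to bound each of the four contributions to $\tn v \tn_{A_h}^2$ separately by $h^{-2}\|v\|_{\mcT_h}^2$ and then take square roots. Recall that
\begin{align*}
  \tn v \tn_{A_h}^2
  = \|\nabla v\|_{\mcT_h\cap\Omega}^2
  + \|h^{-\onehalf}\jump{v}\|_{\mcF_h\cap\Omega}^2
  + |v|_{g_h}^2 .
\end{align*}
First I would handle the gradient term: since $\mcT_h\cap\Omega \subseteq \mcT_h$ and $v|_T \in \PP_k(T)$, the first inverse estimate in~\eqref{eq:inverse-est-cut-grad-and-trace} (summed over $T\in\mcT_h$, using quasi-uniformity to replace $h_T$ by $h$) gives $\|\nabla v\|_{\mcT_h\cap\Omega}^2 \lesssim \|\nabla v\|_{\mcT_h}^2 \lesssim h^{-2}\|v\|_{\mcT_h}^2$.

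Next I would treat the face-jump term. On each interior face $F\subset\partial T^+\cap\partial T^-$, write $\jump{v} = v^+ - v^-$ and use $\|v^\pm\|_{F\cap\Omega} \leqslant \|v^\pm\|_F \lesssim h^{-\onehalf}\|v^\pm\|_{T^\pm}$ by the third estimate in~\eqref{eq:inverse-est-cut-grad-and-trace} (or the standard discrete trace inequality on the uncut face, which is simpler). Hence $\|h^{-\onehalf}\jump{v}\|_{F\cap\Omega}^2 \lesssim h^{-1}(\|v^+\|_F^2 + \|v^-\|_F^2) \lesssim h^{-2}(\|v\|_{T^+}^2 + \|v\|_{T^-}^2)$, and summing over all $F\in\mcF_h$ — each element $T$ appearing a bounded (shape-regularity dependent) number of times — yields $\|h^{-\onehalf}\jump{v}\|_{\mcF_h\cap\Omega}^2 \lesssim h^{-2}\|v\|_{\mcT_h}^2$. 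Finally, the ghost-penalty term is bounded directly: Assumption~\ref{ass:inverse-estimate-jh} gives $|v|_{g_h}^2 \lesssim h^{-2}\|v\|_{\mcT_h}^2$ with a constant independent of the cut configuration.

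Adding the three bounds gives $\tn v \tn_{A_h}^2 \lesssim h^{-2}\|v\|_{\mcT_h}^2$, and taking square roots yields~\eqref{eq:inverse-estimate-Ah}. The argument is essentially routine once the cut-independent inverse inequalities~\eqref{eq:inverse-est-cut-grad-and-trace} and Assumption~\ref{ass:inverse-estimate-jh} are available; the only place any subtlety enters is that all estimates must be carried out on the \emph{uncut} elements and faces (via the trivial bounds $\|\cdot\|_{T\cap\Omega}\leqslant\|\cdot\|_T$ and $\|\cdot\|_{F\cap\Omega}\leqslant\|\cdot\|_F$) so that the constants do not degenerate with the cut position — exactly the point for which Assumption~\ref{ass:inverse-estimate-jh} was imposed on $g_h$. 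So I do not anticipate a genuine obstacle here; the main ``work'' is just the bookkeeping of the face-to-element summation using quasi-uniformity and shape-regularity.
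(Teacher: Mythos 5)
Your proof is correct and follows exactly the route the paper intends: the lemma is stated there as a direct consequence of the cut-independent inverse estimates~\eqref{eq:inverse-est-cut-grad-and-trace} (applied termwise to the gradient and jump contributions, with the trivial bounds $\|\cdot\|_{T\cap\Omega}\leqslant\|\cdot\|_T$, $\|\cdot\|_{F\cap\Omega}\leqslant\|\cdot\|_F$ and a bounded-overlap face-to-element summation) together with Assumption~\ref{ass:inverse-estimate-jh} for the ghost-penalty part, which is precisely what you carry out. The only blemish is the stray mention of ``four contributions'' where your displayed (and the paper's) definition of $\tn\cdot\tn_{A_h}$ has three terms; this does not affect the argument.
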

\begin{remark}
  Note again, that for a cut face with $F\cap T \subsetneq F$ (and
  similar for $\Gamma \cap T$), an inverse estimate of the form
  $\| v \|_{F \cap \Omega} \lesssim \| h^{-\onehalf}v
  \|_{T\cap\Omega}$ \emph{cannot} hold for arbitrary cut
  configurations, thus the application of the inverse
  estimates~\eqref{eq:inverse-est-cut-grad-and-trace}
  forces us to pass from the
  physical domain~$\Omega$ to the entire active mesh $\mcT_h$.
\end{remark}

\subsubsection{The condition number estimate}
Finally, we combine the mass-matrix scaling~\eqref{eq:mass-matrix-scaling},
the discrete Poincar\'e inequality~\eqref{eq:poincare-disc} and the
inverse estimate~\eqref{eq:inverse-estimate-Ah}
to derive a geometrically robust conditon number bound.
\begin{theorem} 
  \label{thm:condition-number-estimate}
  The condition number of the stiffness matrix satisfies
  the estimate
\begin{equation}
\kappa( \mcA )\lesssim h^{-2},
\end{equation}
where the hidden constant depends only on the
dimension $d$, the polynomial order $k$ and the quasi-uniformity of
$\mcT_h$, but not on the particular cut configuration.
\end{theorem}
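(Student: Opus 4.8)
The plan is to follow the classical three-step argument of~\cite{ErnGuermond2006}, assembling the mass-matrix scaling~\eqref{eq:mass-matrix-scaling}, the discrete Poincar\'e inequality of Proposition~\ref{prop:poincare-disc}, and the inverse estimate of Lemma~\ref{leminverse-estimate-Ah}, together with the coercivity~\eqref{eq:coercivity-Ah} and continuity~\eqref{eq:continuity-Ah} of $A_h$ from Proposition~\ref{prop:Ah-coercivity}. Since all the geometrically robust ingredients are already established, the theorem reduces to bounding $\|\mcA\|_{\RR^N}$ and $\|\mcA^{-1}\|_{\RR^N}$ separately and multiplying the results.

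First I would bound the operator norm. For $V, W \in \RR^N$ with associated finite element functions $v, w \in V_h$, the defining relation~\eqref{eq:stiffness-matrix}, the continuity~\eqref{eq:continuity-Ah}, the inverse estimate~\eqref{eq:inverse-estimate-Ah}, and the upper bound in~\eqref{eq:mass-matrix-scaling} give
\[
 (\mcA V, W)_{\RR^N} = A_h(v,w) \lesssim \tn v \tn_{A_h}\tn w \tn_{A_h} \lesssim h^{-2}\|v\|_{\mcT_h}\|w\|_{\mcT_h} \lesssim h^{d-2}\|V\|_{\RR^N}\|W\|_{\RR^N},
\]
and dividing by $\|W\|_{\RR^N}$ and taking suprema over $W$ and $V$ yields $\|\mcA\|_{\RR^N}\lesssim h^{d-2}$.

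Next I would bound $\|\mcA^{-1}\|_{\RR^N}$, equivalently establish $h^{d}\|V\|_{\RR^N}\lesssim \|\mcA V\|_{\RR^N}$ for all $V\in\RR^N$. Starting from the lower bound in~\eqref{eq:mass-matrix-scaling}, then the discrete Poincar\'e inequality~\eqref{eq:poincare-disc}, then the coercivity~\eqref{eq:coercivity-Ah}, and finally Cauchy--Schwarz in $\RR^N$,
\[
 h^{d}\|V\|_{\RR^N}^2 \lesssim \|v\|_{\mcT_h}^2 \lesssim \tn v \tn_{A_h}^2 \lesssim A_h(v,v) = (\mcA V, V)_{\RR^N}\leqslant \|\mcA V\|_{\RR^N}\|V\|_{\RR^N},
\]
so that $\|\mcA^{-1}\|_{\RR^N}\lesssim h^{-d}$. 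Multiplying the two bounds gives $\kappa(\mcA)=\|\mcA\|_{\RR^N}\|\mcA^{-1}\|_{\RR^N}\lesssim h^{-2}$, with a constant inheriting only the dependencies of the three ingredients, i.e.\ on $d$, $k$ and the quasi-uniformity of $\mcT_h$, and not on the cut configuration.

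The main obstacle is not in this final assembly but was already overcome in the preceding subsections: the crucial point is that the unfitted discrete Poincar\'e inequality (Proposition~\ref{prop:poincare-disc}) and the inverse estimate (Lemma~\ref{leminverse-estimate-Ah}) are formulated on the \emph{entire} active mesh $\mcT_h$ rather than on the physical domain $\Omega$, which is exactly why Assumptions~\ref{ass:assumption-jh-l2norm-est} and~\ref{ass:inverse-estimate-jh} on the ghost penalty are needed; without the $L^2$-extension property~\eqref{eq:assumption-jh-l2norm-est} one could not pass between $\|V\|_{\RR^N}$ and a quantity controlled by $\tn v \tn_{A_h}$ in a cut-independent fashion, and the whole argument would degrade with the cut position.
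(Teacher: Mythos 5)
Your proposal is correct and follows essentially the same three-step argument as the paper: both bound $\|\mcA\|_{\RR^N}\lesssim h^{d-2}$ via continuity, the inverse estimate~\eqref{eq:inverse-estimate-Ah} and the mass-matrix scaling~\eqref{eq:mass-matrix-scaling}, and both bound $\|\mcA^{-1}\|_{\RR^N}\lesssim h^{-d}$ via the scaling, the discrete Poincar\'e inequality~\eqref{eq:poincare-disc}, coercivity and Cauchy--Schwarz, before multiplying the two estimates. The only cosmetic difference is that the paper writes the operator-norm bound through an explicit supremum over $W$, while you bound the bilinear form symmetrically in both arguments; the substance is identical.
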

\begin{proof} We need to bound $\| \mcA \|_{\RR^N}$ and $\| \mcA^{-1} \|_{\RR^N}$. 
 First observe that for $w \in V_h$,
\begin{equation}
  \tn  w  \tn_{A_h}
  \lesssim h^{-1} \| w \|_{\mcT_h}
  \lesssim h^{(d-2)/2}\|W\|_{\RR^N},
\end{equation}
where the inverse estimate~\eqref{eq:inverse-estimate-Ah}
and equivalence~\eqref{eq:mass-matrix-scaling}
were successively used.
Thus
\begin{align}
  \| \mcA V\|_{\RR^N} &= \sup_{W \in \RR^N \setminus \{0\}} 
  \frac{( \mcA V, W)_{\RR^N}}{\| W \|_{\RR^N}}
  = \sup_{w \in V_h \setminus\{0\}}  \frac{A_h(v,w)}{\tn  w  \tn_{A_h}} 
  \frac{\tn  w  \tn_{A_h}}{\| W \|_{\RR^N}}
\\
&\lesssim h^{(d-2)/2} \tn  v  \tn_{A_h} 
\lesssim h^{d-2}\|V\|_{\RR^N},
\end{align}
and thus 
$
\| \mcA \|_{\RR^N} \lesssim h^{d-2}
$
by the definition of the operator norm.
To estimate $\| \mcA^{-1}\|_{\RR^N}$,
start from \eqref{eq:mass-matrix-scaling} and combine the Poincar\'e
inequality~\eqref{eq:poincare-disc} with 
a Cauchy-Schwarz inequality to arrive at the following chain of
estimates:
\begin{align}
  \| V \|^2_{\RR^N} 
  \lesssim h^{-d} \| v \|^2_{\mcT_h} 
  \lesssim h^{-d} A_h(v,v) 
  = h^{-d} (V, \mcA V)_{\RR^N}
  \lesssim h^{-d} \| V \|_{\RR^N} \| \mcA V \|_{\RR^N}
\end{align}
and hence $\| V \|_{\RR^N} \lesssim h^{-d}\| \mcA V\|_{\RR^N}$. 
Now setting $ V = \mcA^{-1} W$ we conclude that
 $
 \| \mcA^{-1}\|_{\RR^N} \lesssim h^{-d}
 $
and combining the estimates for $\| \mcA\|_{\RR^N}$ and $\| \mcA^{-1}\|_{\RR^N}$ the theorem follows.
\qed
\end{proof}

\subsection{Ghost penalty realizations}
\label{ssec:ghost-penalty-realizations}
The goal of this section to discuss possible realizations of the ghost
penalty operator $g_h$ which meet the assumptions we made while
developing the theoretical properties of the cutDG
formulation~(\ref{eq-cutdg-formulation}):
\begin{itemize}
\item  {\bf EP1} $H^1$ semi-norm extension property for $v \in V_h$,
  \begin{align}
    \| \nabla v \|_{\mcT_h}
    \lesssim
    \| \nabla v \|_{\Omega}  + |v|_{g_h}
    \label{eq:ass-ep1}
  \end{align}
\item  {\bf EP2} Weak consistency for $v \in H^s(\Omega)$ and  $r = \min\{s,k+1\}$,
  \begin{align}
    |\pi_h^e v |_{g_h} \lesssim h^{r-1} \| v\|_{r,\Omega}
    \label{eq:ass-ep2}
  \end{align}
\item  {\bf EP3} $L^2$ norm extension property for $v \in V_h$,
  \begin{align}
    \| v \|_{\mcT_h} 
    &\lesssim
      \| v \|_{\Omega} + | v |_{g_h}
      \label{eq:ass-ep3b}
  \end{align}
\item {\bf EP4} Inverse inequality for $v \in V_h$,
  \begin{align}
    |v|_{g_h} \lesssim h^{-1} \| v \|_{\mcT_h}
  \end{align}
\end{itemize}
\begin{remark}
  Note that only the first two assumptions are needed to guarantee optimal convergence properties,
  while the last two ones are required to ensure that the condition number of the system matrix
  scales as in the fitted mesh case.
\end{remark}
 \begin{remark}
  \label{rem:difference-cutDG-cutFEM-II}
  Thus the necessity of
  Assumption~\ref{ass:assumption-jh-l2norm-est} reflects a subtle difference between
  continuous and discontinuous cut finite element methods, see
  also Remark~\ref{rem:difference-cutDG-cutFEM-II}.
\end{remark}
    
We start by discussing cutDG variants of the face based ghost
penalties introduced in~\cite{BeckerBurmanHansbo2009,BurmanHansbo2012}
and their higher-order generalizations proposed and analyzed
in~\cite{Burman2010,BurmanHansbo2012,MassingLarsonLoggEtAl2013a}.
Afterwards, we briefly review variants of the ghost penalty proposed
in~\cite{Burman2010,BurmanHansbo2013} which are based on a local
projection stabilization.

\subsubsection{Face-based ghost penalties}
\begin{figure}
 \begin{center}
    \begin{minipage}[t]{0.45\textwidth}
    \vspace{0pt}
    \includegraphics[width=1.0\textwidth]{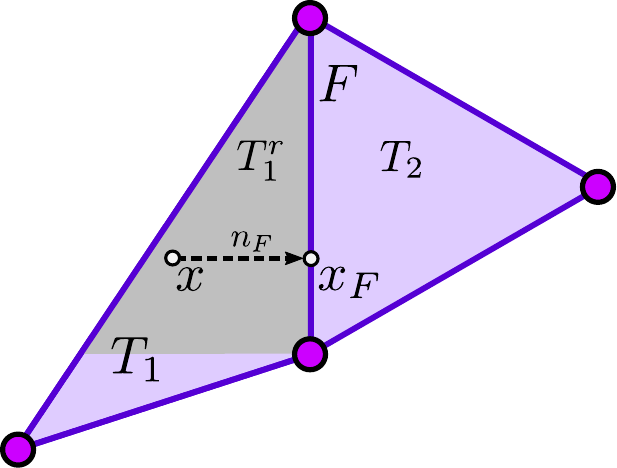}
  \end{minipage}
  \hspace{0.02\textwidth}
  \begin{minipage}[t]{0.45\textwidth}
    \vspace{0pt}
    \includegraphics[width=1.0\textwidth]{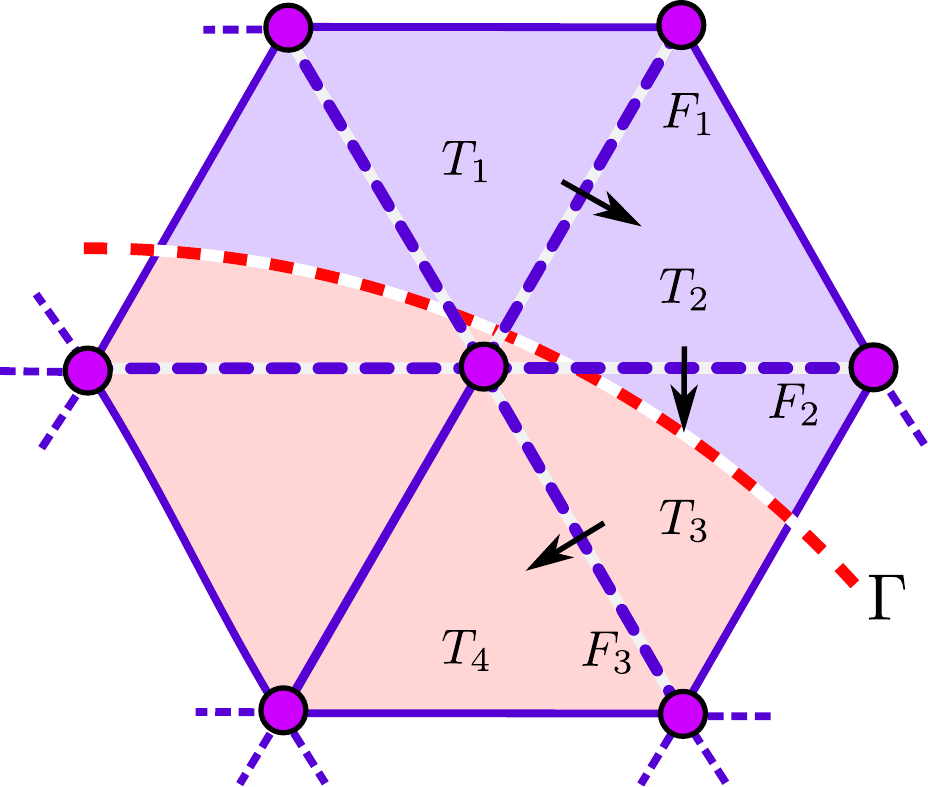}
  \end{minipage}
\end{center}
  \caption{Controlling the $L^2$-norm $\| v \|_{T_1}$
  of a finite element function $v$
  on a barely intersected, ``fictitious'' element $T_0$
  by $\| v \|_{T_4}$ and boundary zone jump-penalties.
  Starting from $T_0$, each term $\|v\|_{T_i}^2$
  can be estimated by the neighboring term
  $\| v \|_{T_{i+1}^2}$ when a sum of
  jump-terms of the form $h_{F_{i+1}}^{2j+i} \|\nablan v \|_{F_{i+1}}^2$
  is added.
}
\label{fig:boundary-zone-zoom}
\end{figure}

As a first step, we recall from~\cite{MassingLarsonLoggEtAl2013a} how
the local $L^2$ control of $v \in V_h$ can be passed between elements
by adding a penalty of the jumps of all higher order normal
derivatives.
\begin{lemma}
  \label{lem:l2norm-control-via-jumps}
  Let $T_1,\, T_2 \in \mcT_h$ be two elements sharing a common face $F$.
  Then for $v_h \in V_h$ the inequality
  \begin{equation}
    \| v \|_{T_1}^2 \lesssim
    \|v\|_{T_2}^2
    + \sum_{0 \leqslant j \leqslant k} h^{2 j + 1}
    (\jump{\partialbar_n^{j}
      v},\jump{\partialbar_n^{j} v})_F
    \label{p2:eq:l2norm-control-via-jumps}
  \end{equation}
  holds
  with the hidden constant depending only on the
  shape-regularity of $\mcT_h$, the polynomial order $k$,
  and the  dimension d. Here, we used the notation
  $\partialbar_n^j v \coloneqq \sum_{| \alpha | = j}\tfrac{D^{\alpha} v(x)n^{\alpha}}{\alpha !}$
   for multi-index $\alpha = (\alpha_1, \ldots,
   \alpha_d)$, $|\alpha| = \sum_{i} \alpha_i$ and $n^{\alpha} =
   n_1^{\alpha_1} n_2^{\alpha_2} \cdots n_d^{\alpha_d}$.
 \end{lemma}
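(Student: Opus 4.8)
The plan is to reduce the estimate to the reference element via a standard scaling argument and then exploit the finite dimensionality of polynomial spaces. First I would fix the two elements $T_1, T_2$ sharing the face $F$ and map the patch $T_1 \cup T_2$ to a fixed reference configuration $\widehat{T}_1 \cup \widehat{T}_2$ using the affine maps; by shape-regularity of $\mcT_h$, the geometry of this reference patch varies only within a compact family, so all constants obtained on it are uniform. Under this map, $h$-powers track the scaling of $L^2$ norms on $d$-dimensional sets (factor $h^{d/2}$) versus $(d-1)$-dimensional faces (factor $h^{(d-1)/2}$) versus derivatives (each derivative costs a factor $h^{-1}$); this is exactly why the $j$-th order normal-derivative jump term on $F$ carries the weight $h^{2j+1}$ — two copies of the trace ($h^{d-1}$ relative to $h^d$, i.e. one power of $h^{-1}$ less) plus $2j$ powers from the $j$ derivatives on each side.

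The core is therefore a statement purely about polynomials on the fixed reference patch: for $\widehat{v} \in \PP_k(\widehat{T}_1) \oplus \PP_k(\widehat{T}_2)$ one has
\begin{align*}
  \| \widehat{v} \|_{\widehat{T}_1}^2
  \lesssim
  \| \widehat{v} \|_{\widehat{T}_2}^2
  + \sum_{0 \leqslant j \leqslant k}
  \| \jump{\partialbar_n^{j} \widehat{v}} \|_{\widehat{F}}^2 .
\end{align*}
To see this, note that the right-hand side defines a seminorm on the finite-dimensional space $\PP_k(\widehat{T}_1) \oplus \PP_k(\widehat{T}_2)$; I would check it is in fact a norm, i.e. that it vanishes only for $\widehat{v} = 0$. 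If the right-hand side is zero then $\widehat{v}|_{\widehat{T}_2} = 0$, and all normal-derivative jumps of order $\leqslant k$ across $\widehat{F}$ vanish; since $\widehat{v}|_{\widehat{T}_2} \equiv 0$ this forces $\partialbar_n^{j}\widehat{v}|_{\widehat{T}_1} = 0$ on $\widehat{F}$ for $j = 0, \ldots, k$. A polynomial of degree $k$ in the normal direction that vanishes together with its first $k$ normal derivatives on the hyperplane containing $\widehat{F}$ must be identically zero on $\widehat{T}_1$ (here I use that the $\{\partialbar_n^j\}_{j\le k}$, being the scaled Taylor coefficients in the normal direction, span enough information to recover a degree-$k$ polynomial along each normal line, together with the fact that $\widehat{F}$ has positive $(d-1)$-measure so the tangential behaviour is determined as well). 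Hence the right-hand side is a genuine norm; since $\| \cdot \|_{\widehat{T}_1}$ is another norm on the same finite-dimensional space, the two are equivalent, which is the claimed reference inequality. Finally I would transform back, collecting the $h$-powers as described, to obtain \eqref{p2:eq:l2norm-control-via-jumps}; quasi-uniformity lets me replace the various $h_{T_i}, h_F$ by the global $h$ up to constants.

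The main obstacle is the norm-equivalence argument on the reference patch, specifically the verification that the jump-seminorm plus $\|\widehat v\|_{\widehat T_2}$ is definite: one must argue cleanly that vanishing of $\partialbar_n^j \widehat v$ on $\widehat F$ for all $j \le k$ forces $\widehat v|_{\widehat T_1} \equiv 0$. This is where the precise definition $\partialbar_n^j v = \sum_{|\alpha|=j} \frac{D^\alpha v\, n^\alpha}{\alpha!}$ matters — these are exactly the coefficients in the Taylor expansion of $\widehat v$ about a point of $\widehat F$ in the direction $n$, so their vanishing on all of $\widehat F$ encodes that the restriction of $\widehat v$ to every line normal to $\widehat F$ is the zero polynomial, hence $\widehat v \equiv 0$ on a neighbourhood of $\widehat F$ in $\widehat T_1$ and therefore on all of $\widehat T_1$ by analyticity of polynomials. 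A secondary technical point is handling the fact that $T_1 \cup T_2$ is not itself a fixed domain but ranges over a shape-regular family; this is dealt with by a standard compactness/continuity argument on the (compact) set of admissible reference configurations, so that the equivalence constant can be taken uniform. Everything else — the bookkeeping of $h$-powers under the affine pullback and the passage from local to global $h$ via quasi-uniformity — is routine.
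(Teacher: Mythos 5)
Your proposal is correct in substance but takes a genuinely different route from the paper. The paper argues constructively: it writes the Taylor expansions of the two polynomials $v_1,v_2$ about the foot point $x_F$ of the normal projection onto the plane of $F$, subtracts them to get the pointwise identity $v_1(x) = v_2(x) + \sum_{j=0}^k |x-x_F|^j \jump{\partialbar_n^j v(x_F)}$, squares, and integrates over the sub-region $T_1^r \subset T_1$ reachable from $F$ along the normal; the single power of $h$ then comes from integrating a face quantity over a normal extent of height $\lesssim h$ and the $h^{2j}$ from the factor $|x-x_F|^{2j}$, while the elementwise norm equivalences $\|v\|_{T_1}\lesssim \|v\|_{T_1^r}$ and $\|v_2\|_{T_1^r}\lesssim\|v_2\|_{T_2}$ (shape regularity plus finite dimension) close the argument. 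You instead scale to a reference patch and invoke equivalence of norms on the finite-dimensional broken polynomial space, checking definiteness of the right-hand side by essentially the same Taylor-along-normal-lines observation. Your route is shorter and more modular, but it pays with two technicalities the constructive proof sidesteps: (i) uniformity of the equivalence constant over the shape-regular family of patch configurations, which you correctly flag and which requires a compactness/contradiction argument on the admissible positions of the off-face vertex of $\widehat T_2$; and (ii) under a non-conformal affine pullback the physical normal $n$ is carried to a direction $B^{-1}n$ that is generally \emph{not} the reference face normal, so the reference inequality must be formulated for a compact family of directions transversal to $\widehat F$ (transversality constant from shape regularity), or the direction must be included in the configuration data; your definiteness argument works verbatim for any transversal direction, so this is easily repaired, but it should be stated. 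What the paper's computation buys in exchange for being slightly longer is an explicit, transparent constant (essentially $2\max\{C_2,k+1\}$ times the elementwise equivalence constants) and a visible explanation of why the weight is exactly $h^{2j+1}$; what your argument buys is brevity and a template that generalizes immediately to other seminorms controlled by face jumps.
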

   For the reader's convenience, we include a slightly improved version of the proof from~\cite{MassingLarsonLoggEtAl2013a}.
\begin{proof}
  Let $n_F$ be the inward pointing face normal vector associated with $F$.
  For a given point $x \in T_1$, we write $x_F = x_F(x)$
  for the normal projection of $x$ onto the plane defined by the
  face $F$. 
  We define the set 
  \begin{align}
    T_1^r = \{ x  \in T_1 \st x_F + t(x -  x_F) \in T_1 \;\foralls t \in [0,1] \},
  \end{align}
  see also Figure~\ref{fig:boundary-zone-zoom} (left).
  By shape regularity and a finite dimension argument,
  there is a constant~$C_1$ such that
  \begin{align}
    \| v \|_{T_1} \leqslant C_1 \| v \|_{T_1^r}.
  \end{align}
  Denote by $v_i$ the uniquely and globally defined polynomial
  satisfying $v_i |_{T_i} = v |_{T_i}$.
  For $i = 1, 2$ and $x \in T_1^r$, we may
  express $v_i(x)$ in terms of its
  Taylor expansion around $x_F$,
  \begin{equation}
    v_i(x) = \sum_{|\alpha| \leqslant k}
    \dfrac{D^{\alpha}v_i(x_F)}{\alpha !} (|x-x_F| n )^{\alpha},
  \end{equation}
  and subtracting the two Taylor expansions, we find that
  \begin{align}
    v_1(x)
    &= v_2(x)
      + \sum_{|\alpha| \leqslant k}
      \dfrac{\jump{D^{\alpha}v(x_F)n^{\alpha}}} {\alpha !} |x-x_F|^{\alpha}
    = v_2(x)
      + \sum_{j=0}^{k}
     |x-x_F|^{j} \jump{\partialbar_n^jv(x_F)}.
      \label{eq:taylor-expansion-subtracted}
  \end{align}
  After taking squares of the
  identity~\eqref{eq:taylor-expansion-subtracted}, multiple
  applications of a Cauchy-Schwarz inequality of the form
  $(\sum_i a_i b_i)^{2} \leqslant \sum a_i^2 \sum_i b_i^2$ show that
  \begin{align}
    v_1^2(x)
      \leqslant
      2v_2^2(x)
      + 2 (k+1)
      \sum_{j=0}^{k}
      |x-x_F|^{2j} \jump{\partialbar_n^jv(x_F)}^2.
      \label{eq:gp-pointwise}
  \end{align}
  Now, we integrate~\eqref{eq:gp-pointwise} over $T_1^r$ with respect to $x$
  and exploit the fact that $x_F(x)$ is constant when integrating in face normal direction.
  With the height of $T_1^r$ over $F$ being bounded by $h$, we thus find that
  \begin{align}
    C_1^{-1}
    \| v_1 \|_{T_1}^2
    &\leqslant 
    \| v_1 \|_{T_1^r}^2
    \leqslant 2
    \|v_2\|_{T_1^r}^2
      + 2(k+1)
      \sum_{j=0}^k
      h^{2j}
      \int_{T_1^r}
      | \jump{ \partialbar_n^j v( x_F(x) }|^2
      \dx
      \\
    &\leqslant 2
    \|v_2\|_{T_1^r}^2
      + 2(k+1)
      \sum_{j=0}^k
      h^{2j+1}
      \int_{F}
      | \jump{ \partialbar_n^j v( x_F) }|^2
      \dx_F,
      \\
    &\leqslant
      2 
      \max\{C_2, k+1\}
    \bigg( 
    \|v_2\|_{T_2}^2
      + \sum_{j=0}^k
      h^{2j+1}
      \| \jump{\partialbar_n^j v} \|_F^2
      \biggr),
  \end{align}
  where in the last step, we again used the fact that the inequality
  $\| v_2 \|_{T_1^r} \leqslant C_2 \| v_2 \|_{T_2}$ holds with
  some constant $C_2$ which only depends on the shape regularity parameter, the dimension $d$ and the
  order $k$. 
\qed
\end{proof}
The previous lemma is the main motivation to introduce the set of ghost
penalty faces
\begin{align}
  \mcF_h^g = \{ F \in \mcF_h:  T^+ \cap \Gamma \neq \emptyset \lor T^- \cap \Gamma \neq \emptyset \},
  \label{eq:faces-gp-bvp}
\end{align}
that is, the set of interior faces in active mesh belonging to
elements which are intersected by the boundary $\Gamma$.
See also Figure~\ref{fig:domain-set-up} (right), where
the of ghost penalty faces are represented by dashed lines.
Thanks to the
geometric assumptions~\ref{ass:mesh-quasi-uniformity}
and~\ref{ass:fat-intersection-property},
we deduce that there is a uniformly bounded
maximal number of ghost penalty faces which have to be crossed to
``walk'' from any element $T \in \mcT_{\Gamma}$ to an element $T'$ satisfying
the fat intersection property~(\ref{eq:fat-intersect-prop}),
see also Figure~\ref{fig:boundary-zone-zoom} (right).
Thus we have derived the first estimate of the following lemma.
\begin{lemma}
  \label{lem:face-based-gp}
  For $v \in V_h$, it holds that
  \begin{gather}
    \| v \|_{\mcT_h}^2
    \lesssim \| v \|_{\Omega}^2
    + \sum_{j=0}^k
    \sum_{F \in \mcF_h^g} h^{2j+1}
    (\jump{\partialbar_n^j v}, \jump{\partialbar_n^j v})_F,
    \label{eq:facet-gp-l2-control}
    \\
    \| \nabla v \|_{\mcT_h}^2
    \lesssim \| \nabla v \|_{\Omega}^2
    + \sum_{j=0}^k
    \sum_{F \in \mcF_h^g} h^{2j-1}( \jump{\partialbar_n^j v}, \jump{\partialbar_n^j v})_{F},
    \label{eq:facet-gp-h1-control}
  \end{gather}
  with the hidden constant only depending on the polynomial order $k$, the quasi-uniformity
  of $\mcT_h$ and the dimension $d$, but not on the particular cut configuration.
\end{lemma}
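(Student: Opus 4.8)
The plan is to combine the single-face $L^2$-transfer estimate of Lemma~\ref{lem:l2norm-control-via-jumps} with the finite ``walking'' argument along ghost penalty faces sketched before the statement, together with a polynomial inverse estimate on the fat element provided by Assumption~\ref{ass:fat-intersection-property}. First note that every active element is either intersected by $\Gamma = \partial\Omega$ or entirely contained in $\overline\Omega$ (indeed $T \in \mcT_h$ meets $\Omega^\circ$ by definition, and if $T$ is uncut, connectedness of $T$ prevents it from leaving $\overline\Omega$); uncut elements satisfy $\|v\|_T = \|v\|_{T\cap\Omega}$ and $\|\nabla v\|_T = \|\nabla v\|_{T\cap\Omega}$ and therefore only contribute to the first terms on the right-hand sides of~\eqref{eq:facet-gp-l2-control}--\eqref{eq:facet-gp-h1-control}. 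It thus remains to bound $\|v\|_T$ and $\|\nabla v\|_T$ for $T \in \mcT_\Gamma$.

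For~\eqref{eq:facet-gp-l2-control}, pick for each $T \in \mcT_\Gamma$ a path $T = T_0, T_1, \dots, T_M = T'$ of elements sharing faces $F_1, \dots, F_M \in \mcF_h^g$ and ending at an element $T'$ with the fat intersection property~\eqref{eq:fat-intersect-prop}; by Assumptions~\ref{ass:mesh-quasi-uniformity} and~\ref{ass:fat-intersection-property} the length $M$ and the number of paths through any fixed ghost penalty face are bounded uniformly in the cut configuration. Iterating~\eqref{p2:eq:l2norm-control-via-jumps} along the path, and using $h \lesssim 1$ to absorb the bounded number of repeated jump contributions, gives $\|v\|_T^2 \lesssim \|v\|_{T'}^2 + \sum_{j=0}^k \sum_{i=1}^M h^{2j+1} \|\jump{\partialbar_n^j v}\|_{F_i}^2$; since $v|_{T'}$ is a polynomial of bounded degree and $|T'\cap\Omega|_d \geqslant c_s |T'|_d$, a finite-dimensional argument exploiting this lower bound on the relative measure yields $\|v\|_{T'}^2 \lesssim \|v\|_{T'\cap\Omega}^2 \leqslant \|v\|_\Omega^2$. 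Summing over $T \in \mcT_h$ and using the bounded path overlap proves~\eqref{eq:facet-gp-l2-control}.

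For the gradient estimate~\eqref{eq:facet-gp-h1-control} the idea is to run the same walking argument, now applied componentwise to $\partial_i v \in \PP_{k-1}(\mcT_h) \subset \PP_k(\mcT_h)$, $i = 1,\dots,d$, and to close the recursion at $T'$ through $\|\nabla v\|_{T'}^2 \lesssim \|\nabla v\|_{T'\cap\Omega}^2 \leqslant \|\nabla v\|_\Omega^2$. Applying Lemma~\ref{lem:l2norm-control-via-jumps} to each $\partial_i v$ across a face $F = T_1 \cap T_2$, summing over $i$, and dropping the vanishing high-order terms gives $\|\nabla v\|_{T_1}^2 \lesssim \|\nabla v\|_{T_2}^2 + \sum_{l=0}^{k-1} h^{2l+1} \sum_{i=1}^d \|\jump{\partialbar_n^l \partial_i v}\|_F^2$. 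Each $\partialbar_n^l \partial_i v$ is a bounded-coefficient linear combination of the derivatives $D^\alpha v$ with $|\alpha| = l+1$, so the inner sum is controlled by $\sum_{|\alpha| = l+1} \|\jump{D^\alpha v}\|_F^2$. The crucial step is to rewrite these as purely normal jumps: at the affine face $F$ with unit normal $n$, decompose $D^\alpha = D_F^\beta \partialbar_n^{\alpha_n}$ where $\beta$ collects the tangential part of $\alpha$, so $|\beta| = l+1-\alpha_n$; since tangential differentiation commutes with one-sided traces on $F$, $\jump{D^\alpha v} = D_F^\beta \jump{\partialbar_n^{\alpha_n} v}$, and as $\jump{\partialbar_n^{\alpha_n} v}$ is a polynomial on $F$, the inverse estimate on $F$ yields $\|\jump{D^\alpha v}\|_F \lesssim h^{-(l+1-\alpha_n)} \|\jump{\partialbar_n^{\alpha_n} v}\|_F$. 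Hence $h^{2l+1} \|\jump{D^\alpha v}\|_F^2 \lesssim h^{2\alpha_n - 1} \|\jump{\partialbar_n^{\alpha_n} v}\|_F^2$; relabelling $m = \alpha_n \in \{0,\dots,k\}$, each transfer step costs at most $\sum_{m=0}^k h^{2m-1} \|\jump{\partialbar_n^m v}\|_F^2$, which is exactly the form appearing on the right-hand side of~\eqref{eq:facet-gp-h1-control}. With this per-step bound at hand, one finishes exactly as for the $L^2$ estimate.

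The main obstacle I expect is the power bookkeeping in the gradient estimate: in contrast to continuous cut finite elements, $\jump{v}$ and its tangential derivatives do not vanish, so mixed derivative jumps genuinely appear and must be traded for purely normal ones; the clean cancellation $h^{2l+1} \cdot h^{-2(l+1-m)} = h^{2m-1}$ hinges on invoking the facewise inverse inequality with the correct scaling and on the operators $\partialbar_n^j$ and the tangential derivatives being adapted to the affine geometry of $F$. The remaining ingredients --- the uniform bounds on the path length and on the path overlap, and the polynomial norm-equivalence on the fat element $T'$ --- are routine consequences of Assumptions~\ref{ass:mesh-quasi-uniformity} and~\ref{ass:fat-intersection-property}.
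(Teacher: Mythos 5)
Your proposal is correct and takes essentially the same route as the paper: the $L^2$ bound comes from the face-transfer Lemma~\ref{lem:l2norm-control-via-jumps} plus the bounded ``walk'' to a fat-intersection element (closed by the polynomial norm equivalence $\|v\|_{T'}\lesssim\|v\|_{T'\cap\Omega}$), and the gradient bound comes from applying that $L^2$ result componentwise to $\nabla v$ and trading tangential derivatives of jumps for a factor $h^{-1}$ via a face inverse estimate, which yields exactly the $h^{2j-1}$ scaling. The only difference is bookkeeping: the paper applies the finished global estimate to $\nabla v$ and then splits $\jump{\partialbar_n^j \nabla v}$ on each face into the normal part $\jump{\partialbar_n^{j+1} v}$ and the tangential part $P_F\nabla\jump{\partialbar_n^j v}$, whereas you perform the same conversion inside each transfer step via a multi-index expansion adapted to the face frame --- same mechanism and constants.
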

\begin{proof}
  It only remains to show the second inequality.
  The prove \eqref{eq:facet-gp-h1-control}, simply replace $v$ by
  $\nabla v$ in~\eqref{eq:facet-gp-l2-control} in a first step, yielding
  \begin{align}
    \| \nabla v \|_{\mcT_h}^2
    \lesssim \| \nabla v \|_{\Omega}^2
    + \sum_{j=0}^k
    \sum_{F \in \mcF_h^g} h^{2j+1}
    (\jump{\nabla \partialbar_n^j v}, \jump{ \nabla \partialbar_n^j v})_F.
    \label{eq:facet-gp-h1-full-grad}
  \end{align}
  In the second step, decompose
\mbox{$\nabla v = (\partial_n v) n_F + {P}_{F} \nabla v$}
into its facet normal and face tangential part
using the tangential projection
\mbox{${P}_F := I - n_F \otimes n_F$}
and then employ
the inverse estimate
\begin{align}
\|\jump{P_{F} \nabla \partialbar_n^j v} \|_F^2 = \|P_{F} \nabla \jump{\partialbar_n^j v} \|_F^2 \lesssim h^{-2} \|\jump{\partialbar_n^j v} \|_F^2 
\end{align}
on the tangential part to show that on each face $F$, we have
\begin{align}
  h^{2j+1}
  \|
  \jump{\partialbar_n^j \nabla v}
  \|_F^2
  \lesssim
  h^{2j+1}
  \|
  \jump{\partialbar_n^{j+1} v}
  \|_F^2
  + 
  h^{2j-1}
  \|
  \jump{\partialbar_n^{j} v}
  \|_F^2.
\end{align}
which establishes estimate~\eqref{eq:facet-gp-h1-control}.
\qed
\end{proof}
\begin{proposition}
  \label{prop:gh1-properties}
  For any set of positive parameters $\{\gamma_j\}_{j=0}^k$,
  the ghost penalty $g^1_h$ defined by
\begin{align}
  g_h^1 (v,w)
  &\coloneqq \sum_{j=0}^k \sum_{F\in F_h^g}  \gamma_j h_F^{2j-1} (\jump{\partialbar_n^j v}, \jump{\partialbar_n^j w})_F,
  \label{eq:jh-def-face-based}
\end{align}
for $v,w \in V_h$ satisfies Assumption~\ref{ass:ghost-penalty-coerc}--\ref{ass:inverse-estimate-jh}.
\end{proposition}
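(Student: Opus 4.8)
\emph{Strategy.} The plan is to verify the four assumptions EP1--EP4 in turn. Three of them are essentially immediate consequences of Lemma~\ref{lem:face-based-gp} (which already absorbs the geometric ``walking to a fat element'' argument) together with standard inverse and trace estimates on the fitted simplices, whereas the weak consistency EP2 requires one additional, though routine, estimate. Throughout I would write $\gamma_{\min} = \min_{0\le j\le k}\gamma_j > 0$ and $\gamma_{\max} = \max_{0\le j\le k}\gamma_j$, and use the quasi-uniformity of $\mcT_h$ (Assumption~\ref{ass:mesh-quasi-uniformity}) to replace every $h_F$ by the global $h$ at the expense of constants depending only on $k$, $d$ and the shape regularity, so that $|v|_{g^1_h}^2 \sim \sum_{j=0}^k\sum_{F\in\mcF_h^g} h^{2j-1}\|\jump{\partialbar_n^j v}\|_F^2$ for $v\in V_h$.

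\emph{EP1, EP3, EP4.} For EP1, bound $|v|_{g^1_h}^2$ from below by $\gamma_{\min}\sum_{j,F} h^{2j-1}\|\jump{\partialbar_n^j v}\|_F^2$ and insert this into the $H^1$-control estimate~\eqref{eq:facet-gp-h1-control}; this gives $\|\nabla v\|_{\mcT_h}^2 \lesssim \|\nabla v\|_\Omega^2 + |v|_{g^1_h}^2$. For EP3, note that $h^{2j+1} = h^2 h^{2j-1}\lesssim h^{2j-1}$ for $h\lesssim 1$, so the right-hand side of the $L^2$-control estimate~\eqref{eq:facet-gp-l2-control} is dominated by $\|v\|_\Omega^2 + \gamma_{\min}^{-1}|v|_{g^1_h}^2$, which is exactly~\eqref{eq:assumption-jh-l2norm-est}. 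For EP4, I would argue face by face: on $F = T_1\cap T_2$ one has $\|\jump{\partialbar_n^j v}\|_F^2\lesssim \sum_{i=1,2}\|\partialbar_n^j(v|_{T_i})\|_F^2$, and the trace inequality~\eqref{eq:trace-inequality} combined with the standard inverse estimate $\|\partialbar_n^j q\|_{T_i}\lesssim h^{-j}\|q\|_{T_i}$ for $q\in\PP_k(T_i)$ bounds each term by $h^{-1-2j}\|v\|_{T_i}^2$; hence $h^{2j-1}\|\jump{\partialbar_n^j v}\|_F^2\lesssim h^{-2}(\|v\|_{T_1}^2+\|v\|_{T_2}^2)$, and summing over the $k+1$ values of $j$ and over all faces (each simplex belonging to a shape-regularity-bounded number of faces) yields $|v|_{g^1_h}^2\lesssim h^{-2}\|v\|_{\mcT_h}^2$, i.e.~\eqref{eq:inverse-estimate-jh}.

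\emph{EP2.} Fix $v\in H^s(\Omega)$, set $r=\min\{s,k+1\}$ and use the bounded extension $v^e\in H^s(\Omega^e)$. On a ghost-penalty face $F\in\mcF_h^g$ with element patch $\omega_F=T_1\cup T_2$, I would introduce an averaged Taylor (Bramble--Hilbert) polynomial $P_F\in\PP_k(\omega_F)$ of $v^e$ on $\omega_F$. Since $P_F$ is a single polynomial across $F$, all its normal-derivative jumps vanish, $\jump{\partialbar_n^j P_F}|_F=0$, so $\jump{\partialbar_n^j\pi_h^e v}|_F=\jump{\partialbar_n^j(\pi_h^e v - P_F)}|_F$. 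On each $T_i$ the difference $\pi_h^e v - P_F$ lies in $\PP_k(T_i)$, so a discrete trace inequality followed by an inverse estimate gives $\|\partialbar_n^j(\pi_h^e v - P_F)\|_F\lesssim h^{-1/2-j}\|\pi_h^e v - P_F\|_{T_i}$, while the triangle inequality together with the local $L^2$-approximation property of $\pi_h$ applied to $v^e$ (recall $\pi_h^e v = \pi_h v^e$) and the Bramble--Hilbert bound for $P_F$ yields $\|\pi_h^e v - P_F\|_{T_i}\lesssim h^r|v^e|_{r,\omega_F}$. Combining these, $h^{2j-1}\|\jump{\partialbar_n^j\pi_h^e v}\|_F^2\lesssim h^{2r-2}|v^e|_{r,\omega_F}^2$; summing over $j=0,\dots,k$ and over all $F\in\mcF_h^g$, using the finite overlap of the patches $\omega_F$ and the stability $|v^e|_{r,\Omega^e}\lesssim\|v\|_{r,\Omega}$ of the extension operator, gives $|\pi_h^e v|_{g^1_h}\lesssim h^{r-1}\|v\|_{r,\Omega}$, which is~\eqref{eq:ghost-penalty-consist}.

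\emph{Main obstacle.} The only delicate point is EP2, and specifically the reason for subtracting the patch-wise polynomial $P_F$ rather than $v^e$ itself: since $g^1_h$ penalizes normal derivatives up to order $k$ but $v^e$ is only $H^r$ with $r$ possibly smaller than $k+1$, one cannot use $\jump{\partialbar_n^j v^e}=0$ for the high-order terms $j\ge r$. Replacing $v^e$ by $P_F$ circumvents this at once, since $P_F$ has vanishing jumps of every order and, after the subtraction, all remaining derivatives fall on polynomials, where inverse estimates are available and the negative powers of $h$ they produce are precisely balanced by the weights $h^{2j-1}$ in $g^1_h$.
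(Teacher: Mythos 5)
Your proposal is correct, and its overall skeleton coincides with the paper's proof: EP1 and EP3 are read off directly from Lemma~\ref{lem:face-based-gp}, and EP4 follows from a per-face trace--inverse estimate (the paper phrases this as using~\eqref{eq:inverse-est-higher-order} with $r=0$, which is the same computation as yours). Where you genuinely deviate is the weak consistency EP2. The paper splits $|\pi_h^e v|_{g_h^1}^2$ by derivative order: for $j\leqslant r-1$ it inserts $v^e$ itself, using that $\jump{\partialbar_n^j v^e}|_F=0$ (a trace-regularity fact, valid since $j\leqslant s-1$) together with the face interpolation estimate~\eqref{eq:interpol-est-cut-F}, while the high-order terms $j\geqslant r$ are bounded directly by the inverse estimate~\eqref{eq:inverse-est-higher-order} combined with the $H^r$-stability of $\pi_h$ and of the extension operator. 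You instead subtract on each face patch $\omega_F=T_1\cup T_2$ a single averaged Taylor polynomial $P_F$ of $v^e$, whose normal-derivative jumps of \emph{every} order vanish, and then treat all orders $j$ uniformly via trace and inverse estimates on the polynomial difference $\pi_h^e v - P_F$ plus a Bramble--Hilbert bound on the patch. Your route buys a uniform treatment of all $j$ and avoids both the vanishing-jump argument for $v^e$ and the separate stability argument for the high-order sum; the price is the introduction of the auxiliary quasi-interpolant $P_F$ and the (routine, but left implicit) verification that the two-element patch is star-shaped with chunkiness controlled by shape regularity so that the Bramble--Hilbert estimate $\|v^e-P_F\|_{\omega_F}\lesssim h^r|v^e|_{r,\omega_F}$ applies. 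The paper's argument stays entirely within the objects it has already set up ($\pi_h^e$ and the estimates~\eqref{eq:interpol-est-cut-T}--\eqref{eq:interpol-est-cut-Gamma}), at the cost of the case split. Both yield the same bound $|\pi_h^e v|_{g_h^1}\lesssim h^{r-1}\|v\|_{r,\Omega}$ with constants independent of the cut configuration.
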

\begin{remark}
  From a theoretical perspective, any pair of parameters choices
  $\{\gamma_j\}_{j=0}^k$ leads to equivalent discrete norms and thus
  we simply assume that $\gamma_0 = \ldots = \gamma_k = 1$ in all
  relevant proofs presented in this work. From a  practical
  perspective, the choice of $\{\gamma_j\}_{j=0}^k$ will clearly
  affect the final constants in the derived estimates and
  consequently, the numerically observed robustness and accuracy of
  the method.  In all our conducted numerical experiments, we chose
  $\beta=\gamma_0 = 50$ and $\gamma_i = 0.1$ for $i=1,2,3$ as a good
  compromise between accuracy, robustness and conditioning.
\end{remark}
\begin{proof}[Proposition~\ref{prop:gh1-properties}]
  Thanks to Lemma~\ref{lem:face-based-gp}, $g_h^1$ has both the $H^1$ and $L^2$ norm extension
  property and it only remains to show
  that~\ref{ass:ghost-penalty-consist} and \ref{ass:inverse-estimate-jh} are satisfied.
  Starting with the weak consistency estimate~(\ref{eq:ghost-penalty-consist}), we
  let $v \in H^s(\Omega)$ and set $r = \min\{s, k+1\}$.
  Then
  \begin{align}
    |\pi_h^e v|_{g_h^1}^2
    &=
      \sum_{j=0}^k h^{2j-1} \| \jump{\partialbar_n^j \pi_h^e v} \|_{\mcF_h^g}^2
      \\
    &=
      \sum_{j=0}^{r-1} h^{2j-1} \| \jump{\partialbar_n^j ( \pi_h^e v - v^e)} \|_{\mcF_h^g}^2
      +\sum_{j=r}^k h^{2j-1} \| \jump{\partialbar_n^j \pi_h^e v} \|_{\mcF_h^g}^2
      \label{eq:gh1-weak-consist-proof-step-1}
      \\
    &\lesssim
      h^{2r-2} \| v \|_{r, \Omega}^2 
      + h^{2j-2} \| D^r \pi_h^e v \|_{\mcT_h}^2
      \\
    &\lesssim
      h^{2r-2} \| v \|_{r, \Omega}^2.
  \end{align}
  where we combined
  the fact that $\jump{\partialbar_n^j  v^e}|_F = 0$ for $0 \leqslant j \leqslant r-1$
  and the approximation property~(\ref{eq:interpol-est-cut-F})
  to estimate the first sum appearing in~\eqref{eq:gh1-weak-consist-proof-step-1}.
  The second sum in~\eqref{eq:gh1-weak-consist-proof-step-1} was
  treated by successively employing 
  an inverse inequality of the form
  \begin{align}
    \| \partial_n^j v \|_{F} \lesssim 
    h^{r - j- \onehalf} \| D^r v \|_{T}, \quad v \in V_h,
    \label{eq:inverse-est-higher-order}
  \end{align}
  and the stability of the projection operator $\pi_h $ and the Sobolev extension operator
  in the $H^r$ norm. Finally,
  to establish the inverse estimate~\eqref{eq:gh1-weak-consist-proof-step-1},
  simply use~\eqref{eq:inverse-est-higher-order} with $r=0$.
  \qed
\end{proof}
\begin{remark}
  The previous proof shows that if  $v \in H^{k+1}(\Omega)$, the ghost penalty $g_h^1$ is
  in fact consistent since then $|v^e|_{g_h^1} = 0$.
\end{remark}

\begin{remark} \label{bvp:rem:alternative_penalty}
  A closer inspection of the proofs of Lemma~\ref{lem:face-based-gp}
  and Proposition~\ref{prop:gh1-properties} reveals that for
  $v \in \PP_1(\mcT_h)$, the ghost penalty
  \begin{align}
  \widetilde{g}_h^1(v,w)
  =
    \sum_{F \in \mcF_h^g} h( \jump{\nabla v}, \jump{\nabla w})_{F},
  \end{align}
  penalizing the \emph{full} gradient jump
  satisfies all required assumptions except the $L^2$ extension
  property~\ref{ass:assumption-jh-l2norm-est}.
  As a consequence, we would obtain geometrically robust a priori estimates
  but not robust condition number bounds when using $\widetilde{g}_h^1$, see
  also Remark~\ref{rem:difference-cutDG-cutFEM-I} and~\ref{rem:difference-cutDG-cutFEM-II}.
\end{remark}

\subsubsection{Projection-based ghost penalties}
To avoid the inconvenient evaluation of higher order
normal derivatives appearing in the ghost penalty defined
by~(\ref{eq:jh-def-face-based}) for polynomial order $k > 1$,
\citet{Burman2010} and~\citet{BurmanHansbo2013} proposed ghost penalty
based on local projections, which we briefly review here in the
context of cut discontinuous Galerkin methods.  Let $P$ be a patch of
$\diam(P) \lesssim h$ containing the two elements $T_1$ and $T_2$ and
define the projection $\pi_{P} : L^2(P) \to \PP_{k}(P)$ to be the
$L^2$~projection onto the space of polynomials of order $k$ defined on
the patch~$P$.  Then the next lemma first stated and proven
in~\cite{Burman2010} shows that the $L^2$ norm of $v\in V_h$ can be passed
from $T_1$ to $T_2$ by penalizing the deviation of $v |_{P}$ from
its $L^2$ projection $\pi_{\mcP} v \in \PP_k(P)$. 
\begin{lemma}
  \label{eq:gp-patch-transfer}
  Let $T_1, T_2 \in P$ and $\diam(P) \lesssim h$. Then for $v \in V_h$ its holds that
  \begin{align}
    \| v \|_{T_1}^2
    &\lesssim
      \| v \|_{T_2}^2 +  \|v - \pi_{P} v\|_{P}^2,
      \label{eq:gp-l2-ext-projection}
    \\
    \| \nabla v \|_{T_1}^2
    &\lesssim
    \|\nabla  v \|_{T_2}^2 +  h^{-2}\|v - \pi_{P} v\|_{P}^2,
      \label{eq:gp-h1-ext-projection}
  \end{align}
  with the hidden constant only depending on the shape regularity of $\mcT_h$ the polynomials order $k$
  and the dimension $d$.
\end{lemma}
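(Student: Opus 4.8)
The proof is the classical local projection stabilization argument, adapted to the broken space $V_h$. The plan is, on the element $T_1$, to split $v$ into its patchwise $L^2$-projection $\pi_{P} v$ and the remainder $v - \pi_{P} v$, to transport the polynomial part $\pi_{P} v$ from $T_1$ to $T_2$ by a scale-invariant norm equivalence for polynomials living on the whole patch, and to control the remainder either directly by its patch norm (for \eqref{eq:gp-l2-ext-projection}) or through an inverse inequality (for \eqref{eq:gp-h1-ext-projection}).

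The only ingredient that is not pure bookkeeping is the following scaled norm equivalence, which I would establish first: for every $q \in \PP_m(P)$ with $m \in \{k-1,k\}$ and for the two elements $T_1, T_2 \subset P$, one has $\| q \|_{T_1} \lesssim \| q \|_{T_2}$ with a constant depending only on $d$, $k$ and the shape regularity and quasi-uniformity of $\mcT_h$. Since $\diam(P) \lesssim h$ and, by quasi-uniformity, $|T_i| \sim h^d$, the affine rescaling $x \mapsto x/h$ maps $P$ onto a reference patch $\widehat{P}$ of diameter $\mcO(1)$ containing reference elements $\widehat{T}_1, \widehat{T}_2$ of size $\mcO(1)$. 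On $\widehat{P}$ both $q \mapsto \|q\|_{\widehat{T}_1}$ and $q \mapsto \|q\|_{\widehat{T}_2}$ are genuine norms on the finite-dimensional space $\PP_m(\widehat{P})$ --- a polynomial vanishing on an open subset vanishes identically --- and a compactness argument over the (uniformly shape-regular, hence precompact) family of admissible reference configurations yields a single equivalence constant valid simultaneously for $m=k-1$ and $m=k$. Rescaling back produces equal powers of $h$ on both sides, which cancel. I would also use the standard inverse inequality $\|\nabla w\|_{T} \lesssim h^{-1}\|w\|_{T}$ for $w \in \PP_k(T)$.

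For \eqref{eq:gp-l2-ext-projection}, write $v|_{T_1} = \pi_{P} v|_{T_1} + (v - \pi_{P} v)|_{T_1}$ and use $(a+b)^2 \leqslant 2a^2 + 2b^2$ to get $\|v\|_{T_1}^2 \leqslant 2\|\pi_{P} v\|_{T_1}^2 + 2\|v - \pi_{P} v\|_{T_1}^2$, where the last term is at most $2\|v - \pi_{P} v\|_{P}^2$. For the first term, the norm equivalence applied to $q = \pi_{P} v \in \PP_k(P)$ gives $\|\pi_{P} v\|_{T_1}^2 \lesssim \|\pi_{P} v\|_{T_2}^2 \leqslant 2\|v\|_{T_2}^2 + 2\|v - \pi_{P} v\|_{T_2}^2 \leqslant 2\|v\|_{T_2}^2 + 2\|v - \pi_{P} v\|_{P}^2$; collecting terms yields the claim. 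For \eqref{eq:gp-h1-ext-projection}, proceed identically with $\nabla v$: $\|\nabla v\|_{T_1}^2 \leqslant 2\|\nabla \pi_{P} v\|_{T_1}^2 + 2\|\nabla(v - \pi_{P} v)\|_{T_1}^2$. Since $v|_{T_1}$ and $\pi_{P} v|_{T_1}$ both lie in $\PP_k(T_1)$, the inverse inequality bounds the second term by $h^{-2}\|v - \pi_{P} v\|_{T_1}^2 \leqslant h^{-2}\|v - \pi_{P} v\|_{P}^2$. For the first term, $\nabla \pi_{P} v \in \PP_{k-1}(P)^d$, so the norm equivalence gives $\|\nabla \pi_{P} v\|_{T_1}^2 \lesssim \|\nabla \pi_{P} v\|_{T_2}^2 \leqslant 2\|\nabla v\|_{T_2}^2 + 2\|\nabla(v - \pi_{P} v)\|_{T_2}^2 \lesssim \|\nabla v\|_{T_2}^2 + h^{-2}\|v - \pi_{P} v\|_{P}^2$, using the inverse inequality once more on $T_2$. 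Adding the two contributions yields \eqref{eq:gp-h1-ext-projection}. The main --- and essentially only --- obstacle is making the scaled norm equivalence rigorous with a constant independent of how $T_1$ and $T_2$ sit inside $P$; once the rescaling to a compact reference setting is in place this is routine, and the rest of the argument is elementary manipulation with the triangle and inverse inequalities.
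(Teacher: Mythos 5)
Your argument is correct, and it is essentially the standard local-projection-stabilization argument: split $v$ into the patchwise projection $\pi_P v$ and the fluctuation $v-\pi_P v$, transfer the polynomial part between $T_1$ and $T_2$ by a scaled norm equivalence on $\PP_m(P)$ (uniform by shape regularity and quasi-uniformity), and absorb the fluctuation directly or via the inverse inequality $\|\nabla w\|_T \lesssim h^{-1}\|w\|_T$. The paper itself gives no proof but defers to \citet{Burman2010}, and the proof there follows the same route, so your proposal matches the intended argument; your acknowledgment that the only delicate point is the uniformity of the polynomial norm-equivalence constant over admissible cut/patch configurations is exactly right, and your rescaling-plus-compactness sketch closes it.
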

\begin{proof}
  For a proof we refer to~\citet{Burman2010}.
  \qed
\end{proof}
The previous lemma motivates the definition of a patch-wise local
projection stabilization $g_P$ and its corresponding (global) ghost
penalty~$g_h^2$ by setting
\begin{align}
  g_{P}(v,w) = h^{-2} (v - \pi_{P}v, w - \pi_{P}w)_{P},
                  \qquad
  g_h^2(v,w) = \sum_{P \in \mcP} g_{P}(v,w),
\end{align}
with $v,w \in V_h$.  By choosing a suitable collections of patches
$\mcP = \{P\}$, the previous lemma can now be applied in a number of
ways to define ghost penalties for the cutDG
formulation~(\ref{eq-cutdg-formulation}).  For instance, a local
projection stabilized analogue to the jump penalty based stabilization
$g_h^1$ can be obtained by defining the patch $P(F) = T^+ \cup T^-$
for two elements $T^+$, $T^-$ sharing the (interior) face $F$ and
setting
\begin{align}
  \mcP_1 = \{P(F) \}_{F\in \mcF_h^g}.
\end{align}
A second possibility is to simply use neighborhood patches $\omega(T)$,
\begin{align}
  \mcP_2 = \{\omega(T) \}_{T \in \mcT_{\Gamma}}.
\end{align}
Finally, one can mimic the cell agglomeration approach taken in
classical unfitted discontinuous Galerkin
approaches~\cite{BastianEngwer2009,HeimannEngwerIppischEtAl2013,SollieBokhoveVegt2011,JohanssonLarson2013}
by associating to each cut element $T \in \mcT_{\Gamma}$ with a small
intersection $|T \cap \Omega|_d \leqslant c_s h_T^d$ an element
$T' \in \omega(T)$ satisfying the fat intersection property
$|T' \cap \Omega|_d \geqslant c_s h_{T'}^d$.  Setting the ``agglomerated
patch'' $P_a(T)$ to $P_a(T) = T \cup T'$, a proper collection of patches
is given by
\begin{align}
  \mcP_3 = \{P_a(T)  \st T \in \mcT_{\Gamma} \wedge  |T \cap \Omega| \leqslant c_s h_T^d\}.
\end{align}

\begin{lemma}
  For $\mcP \in \{\mcP_1, \mcP_2, \mcP_3\}$, the resulting projection
  based ghost penalty $g_h^2$ satisfies
  Assumption~\ref{ass:ghost-penalty-coerc}--\ref{ass:inverse-estimate-jh}.
\end{lemma}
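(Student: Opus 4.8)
The plan is to verify the four abstract assumptions EP1--EP4 (i.e.\ Assumptions~\ref{ass:ghost-penalty-coerc}--\ref{ass:inverse-estimate-jh}) for $g_h^2$ by combining the patch-transfer estimates of Lemma~\ref{eq:gp-patch-transfer} with the fat-intersection property (Assumption~\ref{ass:fat-intersection-property}) and a finite-overlap argument for the patch families $\mcP_1,\mcP_2,\mcP_3$. In all three cases $\diam(P)\lesssim h$ and, thanks to the quasi-uniformity of $\mcT_h$, every element of $\mcT_h$ belongs to a uniformly bounded number of patches; I will use this finite-overlap property repeatedly.

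I would begin with EP4, which is immediate: since $\pi_P$ is the $L^2(P)$-orthogonal projection, $\|v-\pi_P v\|_P\leqslant\|v\|_P$, hence $|v|_{g_h^2}^2=\sum_{P\in\mcP}h^{-2}\|v-\pi_P v\|_P^2\leqslant h^{-2}\sum_{P\in\mcP}\|v\|_P^2\lesssim h^{-2}\|v\|_{\mcT_h}^2$ by finite overlap. For EP2 I would exploit that $\pi_P$ reproduces $\PP_k(P)\supseteq\PP_{r-1}(P)$ with $r=\min\{s,k+1\}$. Writing $v^e$ for the Sobolev extension of $v\in H^s(\Omega)$ and using that $\|(I-\pi_P)w\|_P\leqslant\|w-q\|_P$ for every $q\in\PP_k(P)$, the choice $q=\pi_P v^e$ yields $\|\pi_h^e v-\pi_P\pi_h^e v\|_P\leqslant\|\pi_h^e v-v^e\|_P+\|v^e-\pi_P v^e\|_P$. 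The first term is bounded cellwise by the interpolation estimate for $\pi_h$ together with extension stability, the second by a Bramble--Hilbert estimate on the patch; both are of order $h^r|v^e|_{r,\widetilde P}$ with $\widetilde P$ a fixed enlargement of $P$. Multiplying by $h^{-1}$, squaring, and summing over $P$ using finite overlap and $\|v^e\|_{r,\Omega^e}\lesssim\|v\|_{r,\Omega}$ gives $|\pi_h^e v|_{g_h^2}^2\lesssim h^{2r-2}\|v\|_{r,\Omega}^2$, which is EP2.

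The core of the argument is EP1 and EP3, which I would establish together from~\eqref{eq:gp-l2-ext-projection}--\eqref{eq:gp-h1-ext-projection}. For any $T'\in\mcT_h$ whose intersection with $\Omega$ is \emph{fat} --- in particular every $T'\subset\Omega$, and by construction every cut cell with $|T'\cap\Omega|_d\gtrsim|T'|_d$ --- norm equivalence on the finite-dimensional polynomial space restricted to a subset of comparable measure gives $\|v\|_{T'}\lesssim\|v\|_{T'\cap\Omega}$ and $\|\nabla v\|_{T'}\lesssim\|\nabla v\|_{T'\cap\Omega}$. It then remains to transport the $L^2$/$H^1$ mass of every remaining cell to such a fat cell along a chain of patches: for $\mcP_3$ this is a single step, since each small-cut cell $T$ is directly paired through $P_a(T)$ with a fat partner $T'\in\omega(T)$; for $\mcP_1$ and $\mcP_2$ it is the walking argument already used in the proof of Lemma~\ref{lem:face-based-gp}, where Assumptions~\ref{ass:mesh-quasi-uniformity} and~\ref{ass:fat-intersection-property} guarantee a uniformly bounded number of patch-transfers. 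Each transfer contributes $h^{-2}\|v-\pi_P v\|_P^2=g_P(v,v)$ (respectively $\|v-\pi_P v\|_P^2\leqslant h^2 g_P(v,v)\lesssim g_P(v,v)$ in the $L^2$ case), and since each cell and each patch is reused only a bounded number of times, the telescoped sums are bounded by $\|\nabla v\|_\Omega^2+|v|_{g_h^2}^2$ and $\|v\|_\Omega^2+|v|_{g_h^2}^2$, i.e.\ EP1 and EP3.

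The main obstacle I anticipate is precisely the bookkeeping in EP1/EP3: one has to check for each of the three patch families that the chains linking an arbitrary (cut) cell to a fat cell have uniformly bounded length and that the resulting summation does not accumulate a cut-configuration-dependent constant. This is exactly where Assumption~\ref{ass:fat-intersection-property} and the quasi-uniformity of $\mcT_h$ are decisive, in complete analogy with the face-based ghost penalty $g_h^1$; the remaining estimates (EP2 and EP4) are routine.
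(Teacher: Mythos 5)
Your proposal is correct and follows essentially the same route as the paper: EP1 and EP3 via Lemma~\ref{eq:gp-patch-transfer} combined with the fat-intersection property and a bounded-length patch-transfer argument, EP4 directly from $\|v-\pi_P v\|_P\leqslant\|v\|_P$ and finite patch overlap, and EP2 via a triangle-inequality split into the $\pi_h^e$ interpolation error and the patchwise best-approximation error of $\pi_P$ (the paper uses a three-term split with $L^2$-stability of $\pi_P$, which is an equivalent variant of your two-term split). Your write-up is in fact more explicit than the paper's proof, which dispatches EP1, EP3 and EP4 as ``clearly satisfied'' and only details EP2.
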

\begin{proof}
  Assumption~\ref{ass:ghost-penalty-coerc} and~\ref{ass:assumption-jh-l2norm-est}
  are clearly satisfied thanks to Lemma~\ref{eq:gp-patch-transfer}
  and the geometric assumption~\ref{ass:fat-intersection-property}.
  The inverse inequality~(\ref{eq:inverse-estimate-jh}) clearly holds
  by the very definition of $g_{P}$, so we are left with verifying
  Assumption~\ref{ass:ghost-penalty-consist}.
  Again, for $v \in H^s(\Omega)$ set $r = \min\{s, k+1\}$.
  Thanks to the approximation and stability properties of $\pi_h^e$ and $\pi_{P}$,
  the term  $| \pi_h^e v |_{g_h^2}^2$ can be bounded by
  \begin{align}
    | \pi_h^e v |_{g_h^2}^2
    &=  h^{-2}\| \pi_h^ev - \pi_{P} \pi_h^e v \|_{\mcP}^2
      \\
    &\lesssim
      h^{-2}
      \Bigl(
      \| \pi_h^ev - v^e \|_{\mcP}^2
      + \| v^e - \pi_{P} v^e \|_{\mcP}^2
      + \| \pi_{P} v^e - \pi_{P} \pi_h^e v \|_{\mcP}^2
      \Bigr)
      \\
    &\lesssim
      h^{-2}
      \Bigl(
       h^{2r} \| v \|_{r,\Omega}^2
       + h^{2r} \| v \|_{r,\Omega}^2
      + \| v^e - \pi_h^e v \|_{\mcP}^2
      \Bigr)
      \\
    &\lesssim
       h^{2r-2} \| v \|_{r,\Omega}^2,
  \end{align}
  where we also used the fact that the number of patch overlaps is uniformly bounded.
  \qed
\end{proof}
 \subsection{Numerical examples}
 \label{ssec:numerical-examples-bvp}
 This section is devoted to corroborate our theoretical analysis by a
 number of numerical experiments.  First, a convergence rate study
 for various approximation orders is conducted. Afterwards, we take
 a closer look at how the choice of the 
 stabilization parameters affects the geometrical robustness of the
 energy error and the condition number of the overall system matrix.
 
 \subsubsection{Convergence rate studies}
\label{ssec:bvp-conv-analysis}
 As a first test case, we numerically solve the Poisson problem~\eqref{eq:laplace-strong}
 posed on the flower-like domain 
 \begin{align}
   \Omega &= \{ (x,y) \in \RR^2 \st \phi(x,y) < 0 \} \quad \text{with }
   \phi(x,y) = \sqrt{x^2 + y^2} - r_0 - r_1 \cos(\atantwo(y,x)),
 \end{align}
 setting $r_0 = 0.6$ and $r_1 = 0.2$. The analytical reference solution given by
 \begin{align}
   u(z,y) =  \cos(2 \pi x)\cos(2\pi y) + \sin(2\pi x)\sin(2\pi y).
 \end{align}
 Starting from an initial tesselation $\widetilde{\mcT}_0$ of the
 domain $\Omega_0 = [-1.1,1.1]^2 \supset \Omega$, we generate a
 sequence of meshes $\{\mcT_k\}_{k=0}^{4}$ with mesh size
 $h_k = 2.2\cdot 2^{-3-k}$ by successively refining
 $\widetilde{\mcT}_0$ and extracting the resulting active background
 mesh. On each mesh $\mcT_k$, we compute the numerical solution
 $u_k^p \in \PP_p(\mcT_k)$ to~(\ref{eq-cutdg-formulation})
 using the ghost penalty $g_h^1$ defined by~(\ref{eq:jh-def-face-based})
 together with the parameter set $\beta = \gamma_0 = 50.0$
 and $\gamma_p = 0.1$ for $p=1,2,3$.
 Based on the manufactured solution~$u$,
 the experimental order of
 convergence (EOC) is calculated by
\begin{align*}
    \text{EOC}(k,p) = \dfrac{\log(E_{k-1}^p/E_{k}^p)}{\log(h_{k-1}/h_k)},
\end{align*}
where $E_k^p = \| e_k^p \| = \| u - u_k^p \| $ denotes the  error of the numerical
approximation $u_k^p$ measured in a certain norm $\|\cdot \|$. In our convergence tests,
we consider both the $\| \cdot \|_{H^1(\Omega)}$ and the $\|\cdot\|_{L^2(\Omega)}$ norm.
For each polynomial order $p=1,2,3$, we plot the resulting errors
against the corresponding mesh size $h_k$ in a $\log$-$\log$-plot
which confirms the theoretically predicted convergence rates
for both the $H^1$ and $L^2$ error, see Figure~\ref{fig:bvp:convergence_plots}.
\begin{figure}[htb]
  \begin{center}
    \includegraphics[width=0.47\textwidth,page=1]{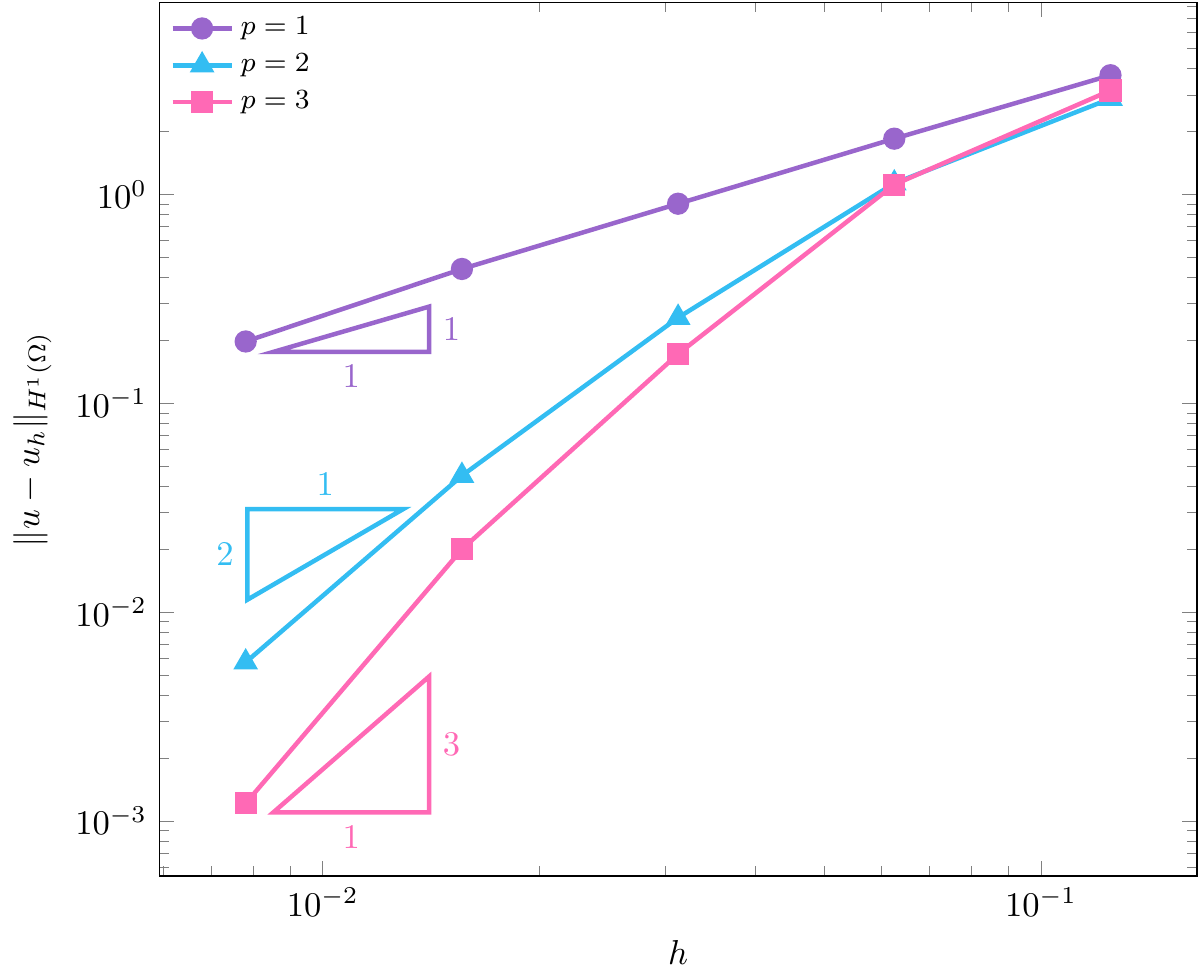}
    \hspace{0.01\textwidth}
    \includegraphics[width=0.47\textwidth,page=2]{figures/convergence_plots_bvp_p1_to_p3_example_1.pdf}
  \end{center}
  \caption{$H^1$ error (left) and $L^2$ error (right) convergence rates for the first, two-dimensional test case using
    different approximation orders $p =1,2,3$.}
  \label{fig:bvp:convergence_plots}
\end{figure}
To demonstrate the applicability of our code to complex
three-dimensional problems, we consider a second test case, where the
model problem~(\ref{eq:laplace-strong}) is solved over a flower
shaped, three-dimensional domain
$\Omega = \{ (x,y,z) \in \RR^3 \st \phi(x,y,z) < 0 \} \subset \RR^3$
defined by 
\begin{equation*}
\phi(x,y,z)=\sqrt{x^2+y^2+z^2} -r+(r/r_0)\cos({5\atantwo(y,x)})\cos({\pi z}),
\end{equation*}
with $r=0.5$ and $r_0=3.5$. This time, we choose an analytic reference solution of the form
\begin{equation}
u(x,y,z)=\exp(x+y+z) \cos(x+y+z) \sin(x+y+z).
\label{eq:bv-analy}
\end{equation}
After embedding $\Omega$ into the domain $\Omega_0=[-0.8,0.8]^3$, a
series of meshes $\{\mcT_k\}_{k=0}^{4}$ is generated with mesh size
$h = 1.6/N$, $N = 6\cdot 2^k$ and the numerical solution is computed
using $V_h = \PP_1(\mcT_k)$ and stabilization parameters $\beta=\gamma_0 = 50.0$, $\gamma_1=0.1$.
The resulting EOC displayed in
Table~\ref{tab:convanalysis2} corroborates the theoretical results
from Section~\ref{ssec::apriori}.
Plots of the computed solutions to both the two- and three-dimensional test problems
can be found in Figure~\ref{fig:boundary3D}.
\begin{table}[htb]
  \small
  \begin{center}
  \begin {tabular}{cr<{\pgfplotstableresetcolortbloverhangright }@{}l<{\pgfplotstableresetcolortbloverhangleft }cr<{\pgfplotstableresetcolortbloverhangright }@{}l<{\pgfplotstableresetcolortbloverhangleft }c}%
\toprule $N_k$&\multicolumn {2}{c}{$\|e_k^1 \|_{H_1(\Omega )}$}&EOC&\multicolumn {2}{c}{$\| e_k^1 \|_{L^2(\Omega )}$}&EOC\\\midrule %
\pgfutilensuremath {6}&$4.53$&$\cdot 10^{-1}$&--&$6.33$&$\cdot 10^{-2}$&--\\%
\pgfutilensuremath {12}&$2.46$&$\cdot 10^{-1}$&\pgfutilensuremath {0.88}&$1.88$&$\cdot 10^{-2}$&\pgfutilensuremath {1.75}\\%
\pgfutilensuremath {24}&$1.26$&$\cdot 10^{-1}$&\pgfutilensuremath {0.97}&$5.04$&$\cdot 10^{-3}$&\pgfutilensuremath {1.90}\\%
\pgfutilensuremath {48}&$6.35$&$\cdot 10^{-2}$&\pgfutilensuremath {0.99}&$1.28$&$\cdot 10^{-3}$&\pgfutilensuremath {1.98}\\%
\pgfutilensuremath {96}&$3.18$&$\cdot 10^{-2}$&\pgfutilensuremath {1.00}&$3.14$&$\cdot 10^{-4}$&\pgfutilensuremath {2.03}\\\bottomrule %
\end {tabular}%

  \caption{Convergence rates for three-dimensional test case using $\PP_1(\mcT_h)$.
  }
  \label{tab:convanalysis2}
  \end{center}
\end{table}

\begin{figure}[htb]
  \begin{subfigure}[t]{0.46\textwidth}
  \vspace{0em}
  \begin{center}
        \includegraphics[width=1.0\textwidth]{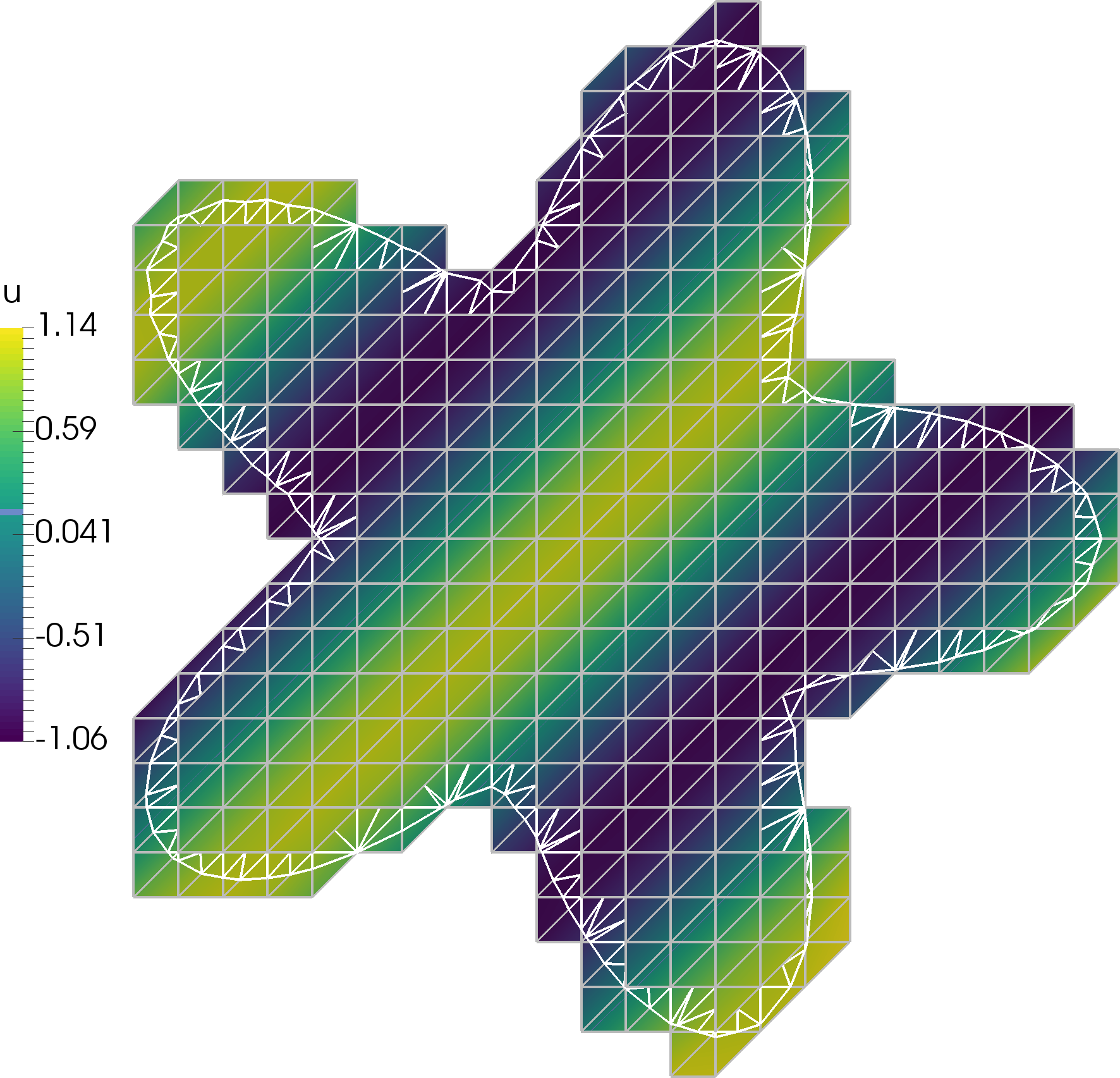}
  \end{center}
\end{subfigure}
\hspace{0.02\textwidth}
\begin{subfigure}[t]{0.52\textwidth}
\vspace{0em}
  \begin{minipage}[c]{0.15\linewidth}
  \vspace{2.5em}
    \includegraphics[width=1.0\textwidth]{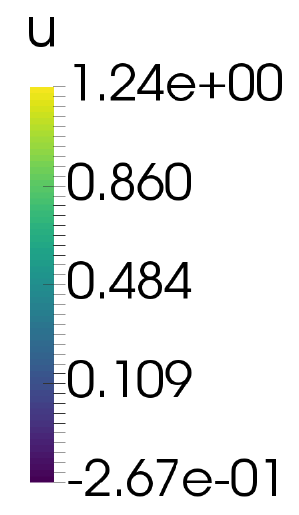}
  \end{minipage}
  \begin{minipage}[c]{0.83\linewidth}
  \vspace{0em}
\hspace{-0.01\textwidth}
    \includegraphics[width=1.0\textwidth]{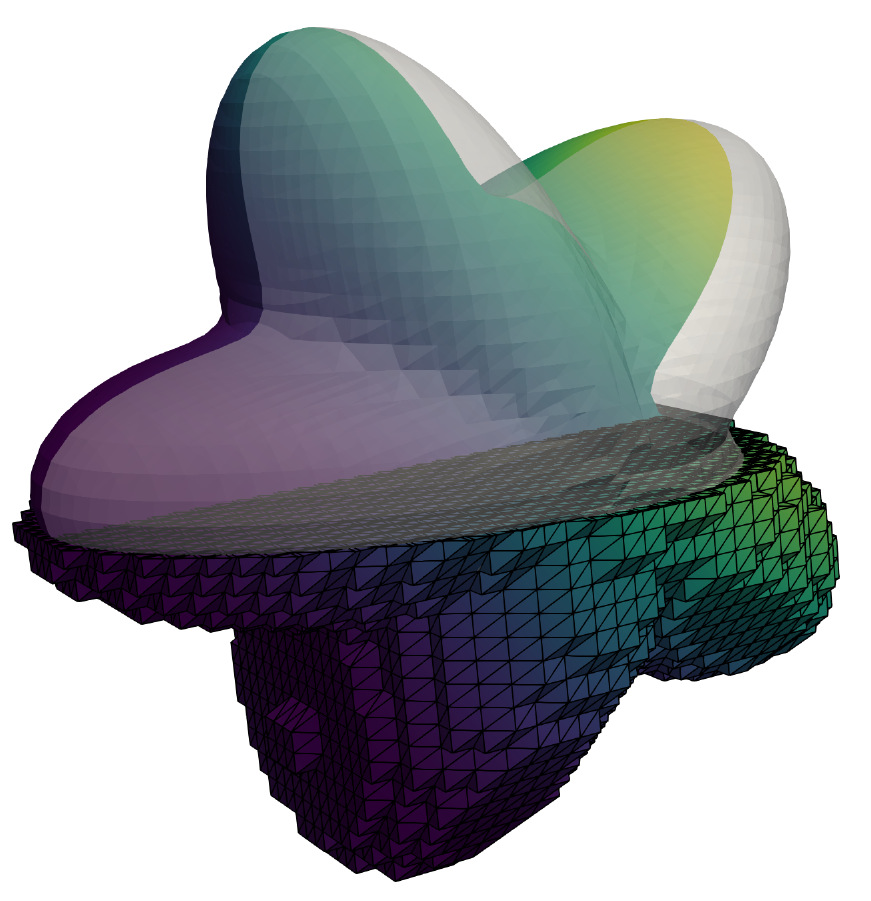}
  \end{minipage}
\end{subfigure}
\caption{Solutions plots for the two-dimensional (left) and the
  three-dimensional (right) convergence study. The solutions are
  plotted over the active background mesh, together with actual the
  physical domain embedded into it. For the two-dimensional problem,
  the $\PP_2(\mcT_h)$ based solution is shown.}
  \label{fig:boundary3D}
\end{figure}

\subsubsection{A numerical look at the $H^1$ extension property}
Next, we illustrate numerically the role of
the $H^1$~extension property defined in
Assumption~\ref{ass:ghost-penalty-coerc}.  In a first experiment, we
repeat the convergence study for the two-dimensional test problem from
Section~\ref{ssec:bvp-conv-analysis} using discontinuous $P_1$
elements and deactivate the ghost penalty by setting
$\gamma_0 = \gamma_1 = 0$.  To trigger critical cut configurations more
easily, we compute the corresponding numerical solutions on a series
of \emph{non-hierarchical} meshes $\{\mcT_k\}_{k=1}^{20}$ for the
domain $[-0.8,0.8]^2$. More precisely, for $k=1,2,\ldots,20$, a
structured mesh $\mcT_k$ was generated by subdividing each axis into
$N = k\cdot 5$ subintervals and subsequently dividing each
quadrilateral into two similar triangles.
Figure~\ref{fig:bvp:convergence_plots_h1_ext_prop} (left) displays the
computed $H^1$ discretization error over the mesh size in a double
logarithmic plot. The erratic convergence curve clearly illustrates
that without properly defined ghost penalties, the particular cut
configuration has a severe impact on the $H^1$ discretization error.
On the contrary, the corresponding convergence curve for stabilized
method with $\gamma_0 = 50$ and $\gamma_1 = 0.1$ has the theoretically
expected slope.  We repeat this experiment with alternative
ghost penalty
$ \widetilde{g}_h^1(v,w) = \gamma_1 h( \jump{\nabla v}, \jump{\nabla
  w})_{\mcF_h^g}$ satisfying only
Assumption~\ref{ass:ghost-penalty-coerc},
\ref{ass:ghost-penalty-consist}, and~\ref{ass:inverse-estimate-jh}
(see Remark~\ref{bvp:rem:alternative_penalty}) and observe that the
resulting convergence plot practically coincides with the one for the
standard stabilized cutDGM. In all three cases, the $L^2$ convergence
rate curve was practically unaffected by the particular cut
configuration and thus was omitted from the convergence rate plots.
\begin{figure}[htb]
  \begin{center}
    \begin{subfigure}[t]{0.472\textwidth}
      \includegraphics[width=1.0\textwidth,page=1]{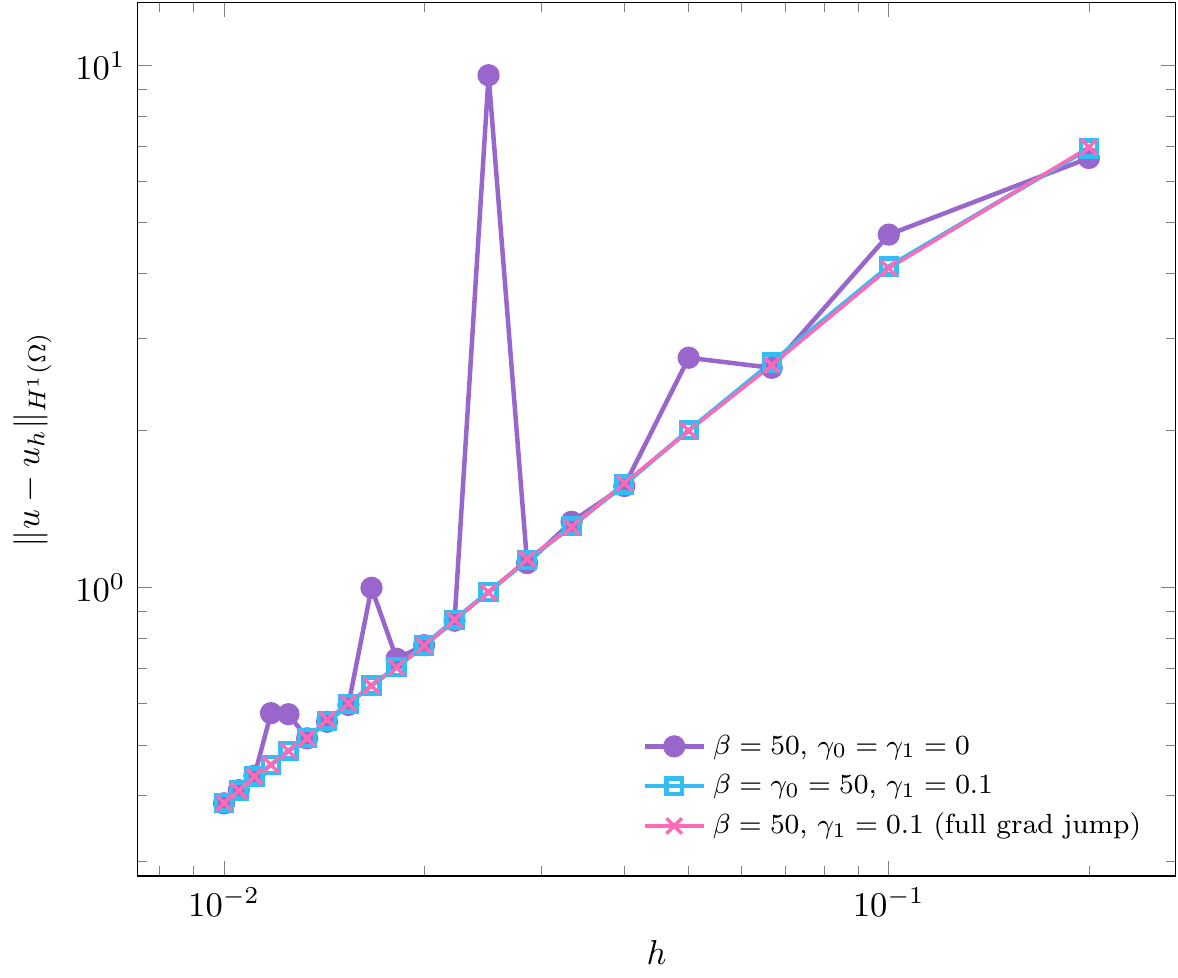}
    \end{subfigure}
    \hspace{0.02\textwidth}
    \begin{subfigure}[t]{0.49\textwidth}
    \includegraphics[width=1.0\textwidth,page=1]{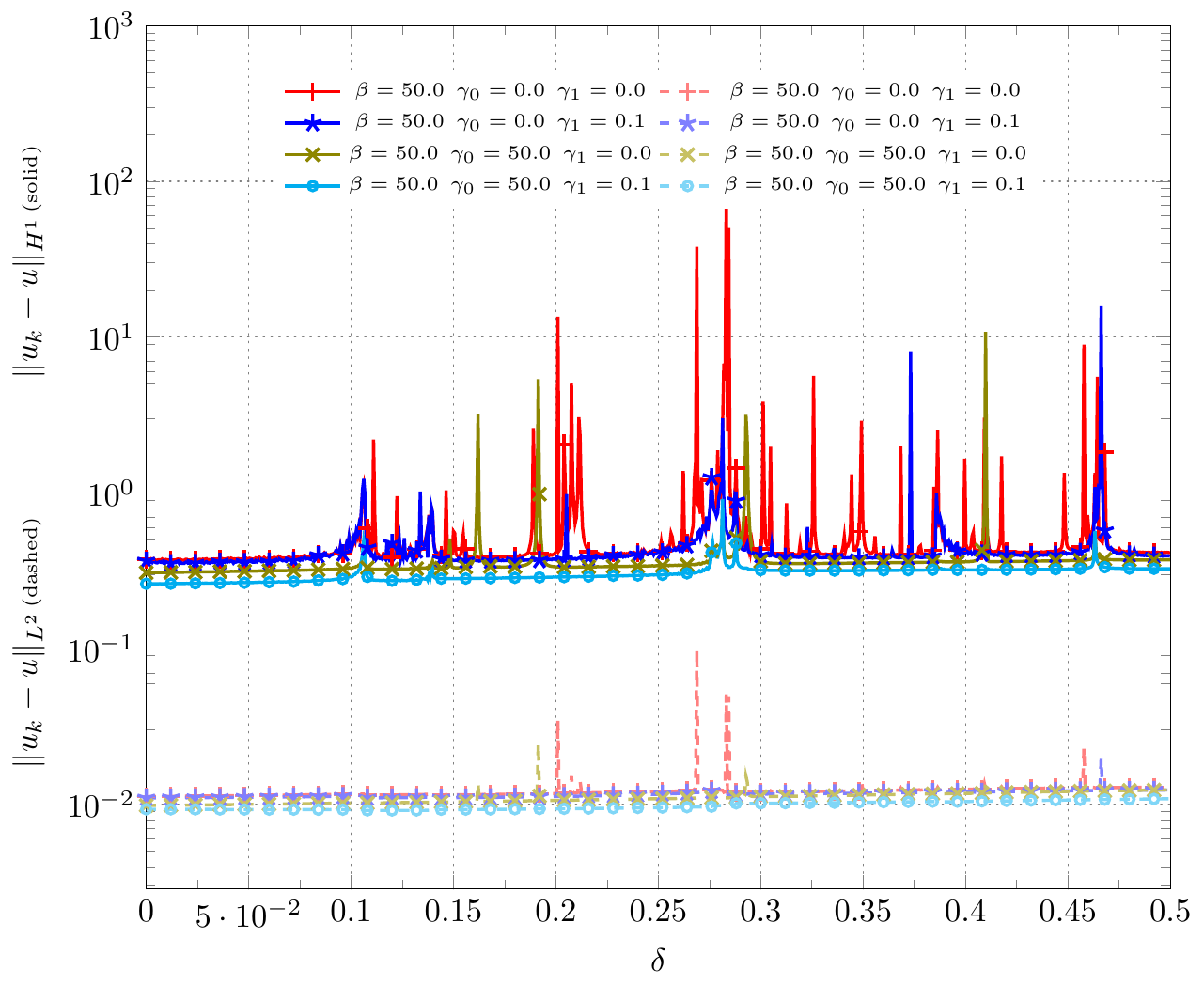}
    \end{subfigure}
  \end{center}
  \caption{$H^1$ error cut configuration dependency studies.  (Left)
    Convergence study for the two-dimensional test case on a series of
    non-hierarchical meshes using $\PP_1(\mcT_h)$ without ghost penalty,
    with ghost penalty $g_h^1$, and the alternative full grad jump ghost penalty
    $\widetilde{g}_h^1$.
    (Right) $H^1$ and $L^2$ error study on a single mesh computed
    for a family of gradually translated domains $\Omega_{1,\delta}$.
  }
  \label{fig:bvp:convergence_plots_h1_ext_prop}
\end{figure}

In a second experiment, we take a closer look at the influence of the
cut configuration on the discretization error for a single fixed mesh.
Again, we start from the two-dimensional test problem from
Section~\ref{ssec:bvp-conv-analysis} using discontinuous $P_1$
and define a $\mcT_h$. for $\Omega_0 = [-0.8, 0.8]^2$ with mesh size $h = 1.6/8$.
To create a large sample of possible cut configurations,
we then generate a family of gradually translated domains
$\{\Omega_{\delta_k}\}_{k=1}^{5000}$ where
$\Omega_{\delta_k} = \Omega + \delta_k (h,h) $ with $\delta_k = k\cdot 2e^{-4}$
and the direction vector $(h,h)$.
For each translated domain, we compute the $H^1$ and $L^2$ discretization
error plot them as functions of $\delta_k$ in a semi-$\log$ plot,
see Figure~\ref{fig:bvp:convergence_plots_h1_ext_prop} (right).
Again, the $H^1$ error for the fully activated ghost penalty $g_h^1$ with $\gamma_0 =  50$ and $\gamma_1 = 0.1$
is nearly completely unaffected by the cut configuration.
When deactivating any of its contribution
by either setting $\gamma_0$ or $\gamma_1$ (or both) to $0$, the $H^1$ error
becomes clearly much more dependent on the particular cut configuration.
In contrast, the $L^2$ error is nearly  unaffected and shows only a few, less drastic
spikes when the ghost penalty stabilization is turned off.

\subsubsection{Condition number studies}
\label{sssec:Cond_number_studies_bvp}
We conclude the presentation of the numerical results
with a series of experiments which illustrate the stabilizing effect
of the ghost penalty on the
condition number of the system matrix associated with the cutDGM~(\ref{eq-cutdg-formulation}).
Following the experimental setup in Section~\ref{ssec:bvp-conv-analysis},
we now pick $\Omega_1 = \{ (x,y,z) \in \RR^3 \st x^2 + y^2 + z^2 - 0.25^2\}$ which is embedded into the domain
$[-0.51, 0.51]^3$. The correspond family of translated domains $\{\Omega_{\delta_k}\}_{k=1}^{500}$
is defined through setting $\delta_k = k \cdot 0.002$. We also refer to
Figure~\ref{fig:condition-number-test-BV} for a principal sketch of the experimental setup.
\begin{figure}[htb]
  \begin{center}
    \begin{minipage}[c]{0.45\linewidth}
      \vspace{0pt}
      \centering
      \includegraphics[width=0.7\textwidth]{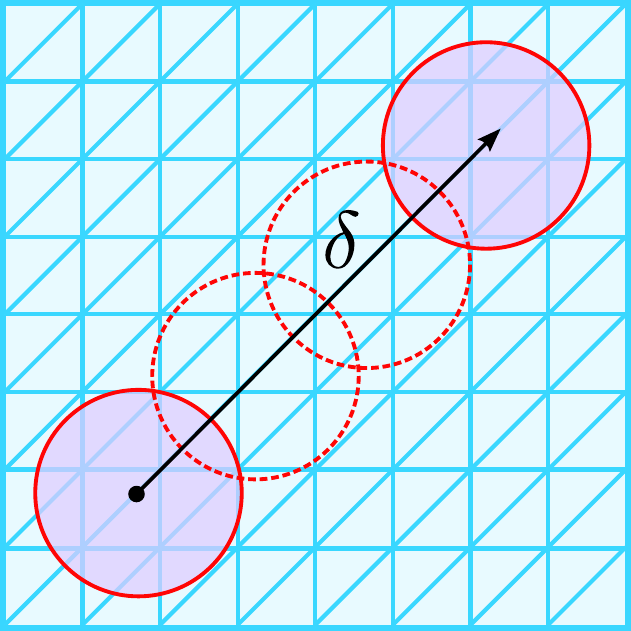}
    \end{minipage}
    \hspace{0.01\linewidth}
    \begin{minipage}[c]{0.5\linewidth}
      \vspace{0pt}
      \centering
      \includegraphics[width=0.7\textwidth]{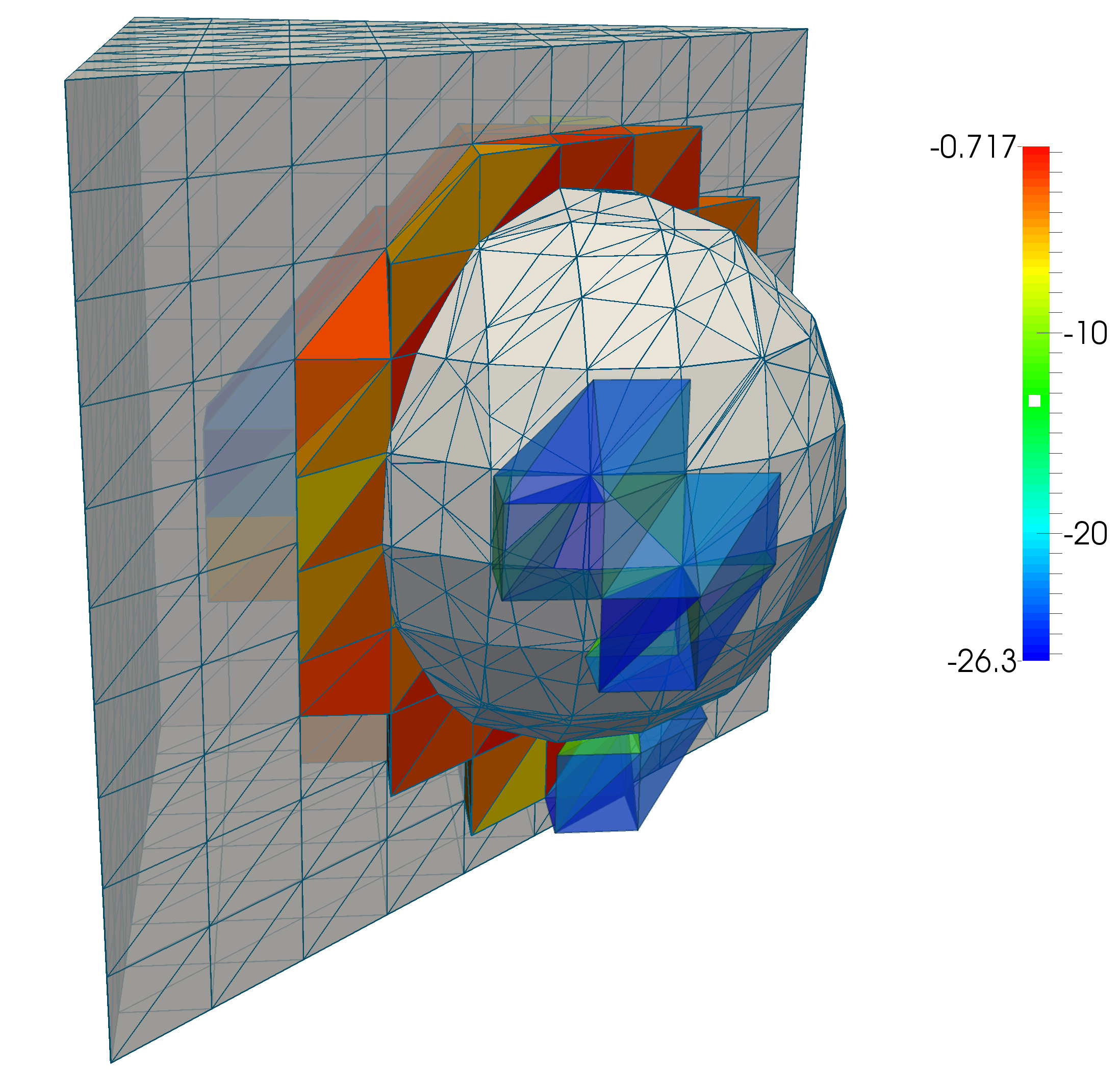}
    \end{minipage}
  \end{center}
  \caption{(Left) Principal experimental set-up to study the sensitivity  of the conditon number
    with respect to the relative $\Gamma$ position.
    (Right): Snapshot of an intersection configuration when moving $\Gamma$
    through the background mesh.
    To visualize ``extreme'' cut configurations,
    the color map plots
    for each intersected mesh element $T$ the value of $\log(\Gamma \cap T/\diam(T)^2)$.
    Thus blue-colored elements contain only an extremely small fraction of the surface.
  }
  \label{fig:condition-number-test-BV}
\end{figure}
For each cut configuration,
the condition number of system matrix associated with formulation~(\ref{eq-cutdg-formulation})
is computed and plotted against $\delta_k$ in a semi-$\log$ plot.
We consider the fully activated ghost penalty $g_h^1$ with $\gamma_0 = 50$ and $\gamma_1 = 0.1$,
partially deactivated versions of it and finally, the alternative ghost penalty $\widetilde{g}_h^1$
which does not satisfy the $L^2$ extension property~\ref{ass:assumption-jh-l2norm-est}.
As predicted by our theoretical analysis, the matrix condition number of all variants
except for the fully activated ghost penalty~$g_h^1$
is highly sensitive to the translation parameter $\delta$, see
Figure~\ref{fig:diff_sphere_stab}~(left).
\begin{figure}[htb]
\begin{center}
    \includegraphics[width=0.48\textwidth]{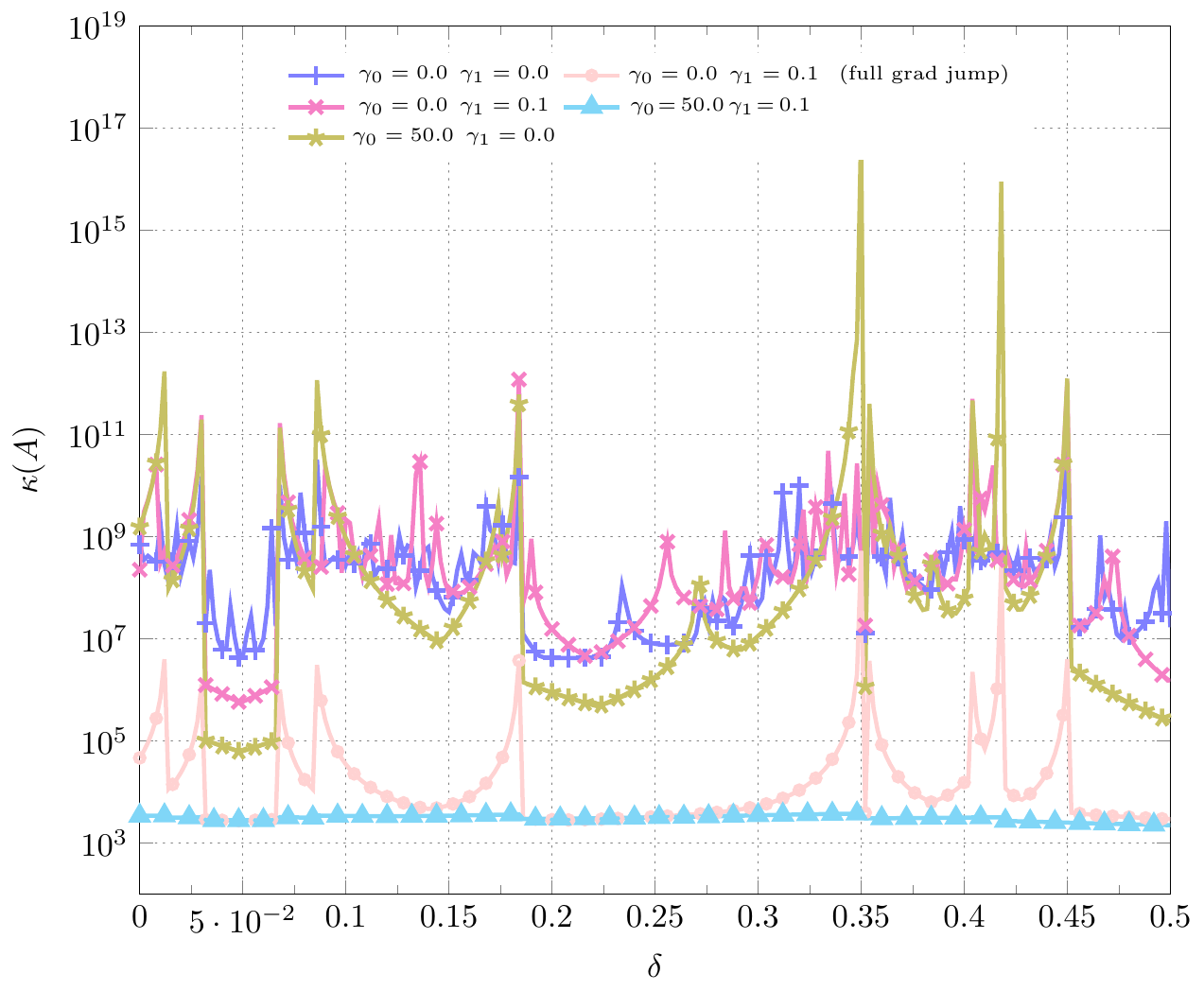} 
    \hspace{0.01\textwidth}
    \includegraphics[width=0.48\textwidth]{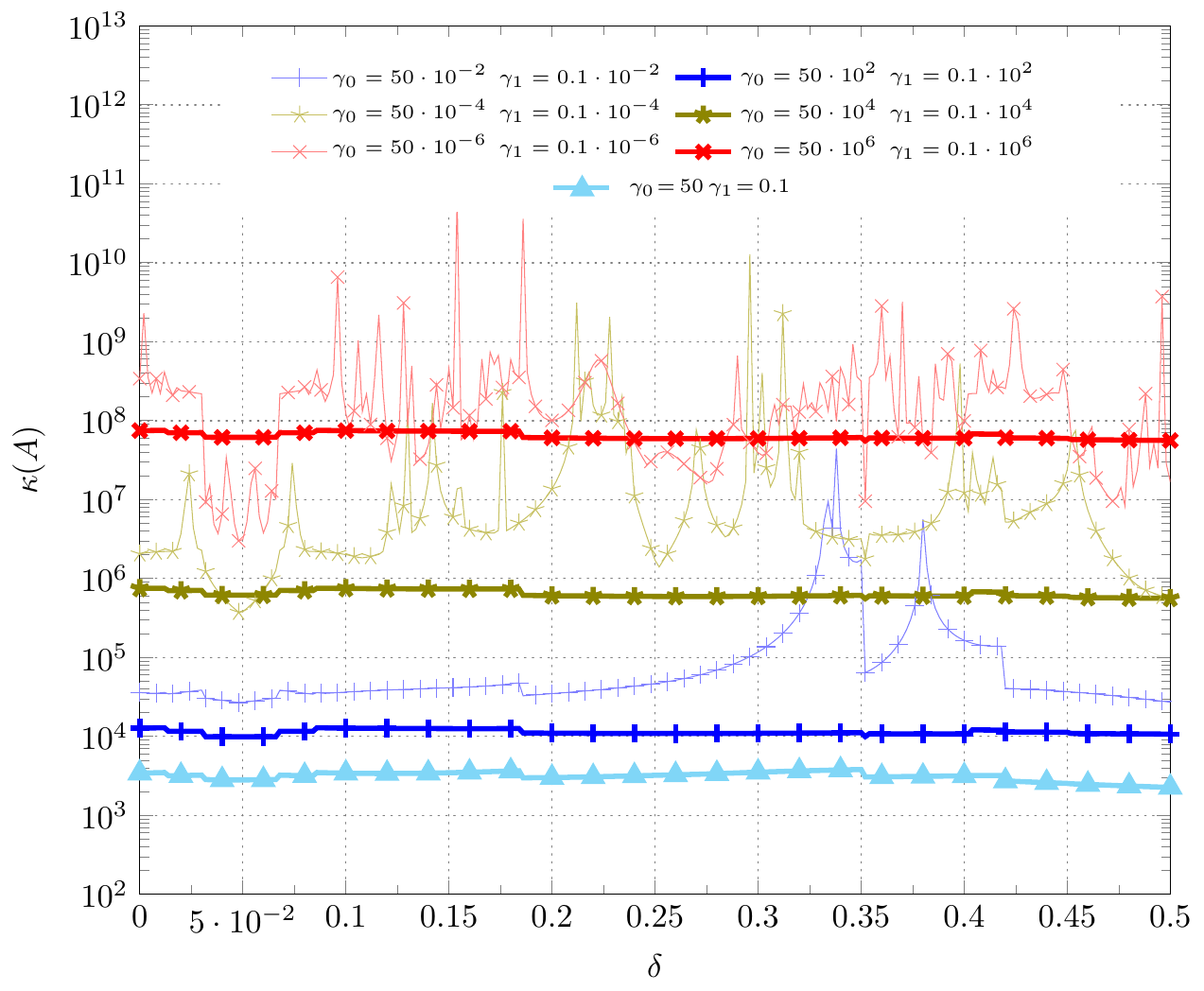}    
    \caption{(Left) Condition number analysis for shifting domain with
      and without ghost penalty stabilization. (Right) Condition
      number analysis for changing values of stabilization parameters
      $\gamma_0$ and $\gamma_1$.}
  \label{fig:diff_sphere_stab}
  \end{center}
\end{figure} 
In a final numerical experiment, we assess the effect of the stability
parameter magnitude on the magnitude and geometric robustness of the
condition number. Starting from our standard parameter choice
$\gamma_0 = 50$ and $\gamma_1 = 0.1$, we rescale the initial pair of
parameters using rescaling factors ranging from $10^{-6}$ to $10^6$
and repeat the previous experiments. The resulting plot in
Figure~\ref{fig:diff_sphere_stab} (right) shows that both the base
line magnitude and the fluctuation of the condition number decrease
with increasing size of the stability parameters with a minimum around
our parameter choice. A further increase of the stability parameters
leaves the condition number insensitive to $\delta$, but leads to an
increase of the overall magnitude. Combined with a series of
convergence experiments (not presented here) for various parameter
choices and combinations , we found that our standard parameter choice
offers a good balance between the accuracy of the numerical scheme and
the magnitude and fluctuation of the condition number.

\section{Interface problems}
\label{sec:interface-problems}
In the final section, we demonstrate how the theoretical framework
developed in
Section~\ref{ssec:cutDGM-bvp}--\ref{ssec:ghost-penalty-realizations}, can
be applied to formulate and analyze a cutDG method for interface
problems.  As before, we depart from a model problem and
define suitable computational domains and approximation spaces leading
us to a symmetric interior penalty based cutDG formulation.  We present a
short but complete theoretical analysis establishing geometrically
robust error estimates in the energy norm, which are complemented with
a number of convergence rate studies.  For a detailed presentation and
analysis of the corresponding cut finite element method for Poisson
interface problems using \emph{continuous} piecewise polynomials, we
refer to~\cite[Section 3]{BurmanZunino2012}.
\subsection{Model problem}
\label{ssec:ifp:model-problem}
Let $\Omega$ be a bounded, closed domain which is divided into two non-overlapping
subdomains $\Omega_1$ and $\Omega_2$ by an interface $\Gamma = \partial \Omega_1 \cap \partial \Omega_2$.
Consider the Poisson interface problem
\begin{subequations}
  \label{eq:interface-problem}
  \begin{alignat}{3}
    -\nabla \cdot (\kappa \nabla u) &= f &&\quad \text{in } \Omega_1
    \cup \Omega_2,
    \label{eq:interface-pde}
    \\
    u &= g &&\quad \text{on } \partial \Omega,
    \label{eq:interface-dirichlet}
    \\
    \jump{u} &= g_D &&\quad \text{on } \Gamma,
    \label{eq:interface-jump}
    \\
    \label{eq:interface-flux-jump}
    \jump{\kappa\partial_n u} &= g_N &&\quad \text{on } \Gamma,
  \end{alignat}
\end{subequations}
where the restricted diffusion coefficient
$\kappa_i  =  \kappa|_{\Omega_i}$ is supposed to be constant for $i=1,2$.
Assuming the interface normal $n$ is pointing outward with respect to $\Omega_1$,
the solution and normal flux jumps across $\Gamma$ are defined as usual by respectively 
\begin{align}
  \jump{u} = u_1|_{\Gamma} - u_2|_{\Gamma},
  \quad \text{and } \quad
\jump{\kappa \partial_n u} =  \kappa_1 \nabla u_1 \cdot n - \kappa_2 \nabla u_2 \cdot n.
\end{align}

\subsection{A cut discontinuous Galerkin method for the Poisson interface
  problem}
\label{ssec:cutDGM-if}
As for the Poisson boundary problem, we assume that
$\Omega$ is covered by a quasi-uniform background mesh $\widetilde{\mcT}_h$
with mesh size $h$. For each subdomain $\Omega_i$, $i=1,2$, 
the \emph{active} background mesh $\mcT_{h,i}$ is again given by
\begin{align}
  \mcT_{h,i} &= \{ T \in {\mcT}_{h} : T \cap \Omega_i^{\circ} \neq \emptyset \}.
\end{align}
Finally, we denote the set of (interior and exterior) faces belonging to $\mcT_{h,i}$
by $\mcF_{h,i}$. Figure~\ref{fig:ip-comp-domains} illustrates the various computational domains
and related mesh entities.
\begin{figure}[htb]
  \centering
  \begin{minipage}[b]{0.33\textwidth}
    \vspace{0pt}
    \includegraphics[width=1.0\textwidth]{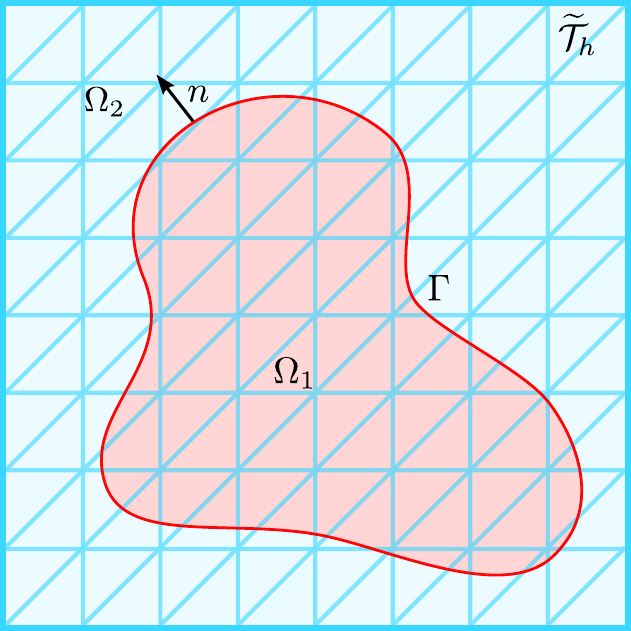}
  \end{minipage}
  \vspace{0.02\textwidth}
  \begin{minipage}[b]{0.285\textwidth}
    \includegraphics[width=1.0\textwidth]{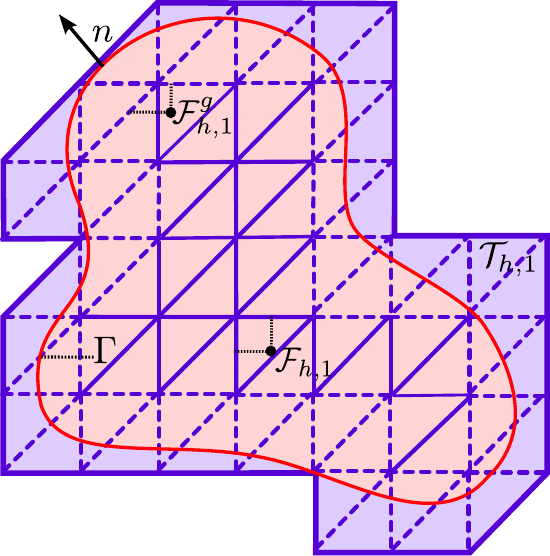}
  \end{minipage}
  \vspace{0.02\textwidth}
  \begin{minipage}[b]{0.33\textwidth}
    \vspace{0pt}
    \includegraphics[width=1.0\textwidth]{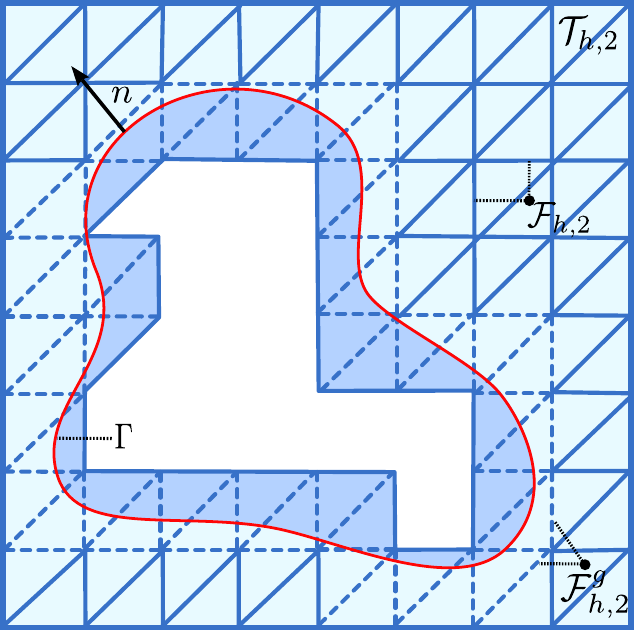}
  \end{minipage}
  \vspace{-2ex}
  \caption{Computational domains for the interface problem. (Left)
    Original background mesh covering $\Omega = \Omega_1 \cup \Omega_2$.
    (Middle)
    The active background mesh $\mcTone$, internal faces $\mcF_{h,1}$ and the
    ghost penalty facets $\mcF_{h,1}^{g}$ associated with $\Omega_1$.
    (Right) Corresponding mesh entities associated with $\Omega_2$.
  }
  \label{fig:ip-comp-domains}
\end{figure}
On the active mesh $\mcT_{h,i}$, we introduce the broken polynomial space of order $k$,
\begin{align}
  V_{h,i} &= \PP_k(\mcT_{h,i}),
            \intertext{and define the resulting total approximation space by}
  V_h &= V_{h,1} \times V_{h,2}.
  \label{eq:Vsh-def-ip}
\end{align}
For $v = (v_1, v_2) \in V_h$, 
the weighted average of the normal flux along $\Gamma$
is given by 
\begin{align}
  \wavg{\kappa \partial_n v}
  &=
    \omega_1 \kappa_1 \partial_n v_1 + \kappa_2 \partial_n v_2,
  \label{eq:weigthed-avg-def}
\end{align}
where the weights satisfy $0\leqslant \omega_1,\omega_2 \leqslant 1$ and  $\omega_1 + \omega_2 = 1$.
In the following, we will also make use of the dual weighted average,
\begin{align}
  \wavgd{v}
  &=
    \omega_2 v_1 + \omega_1 v_2.
    \label{eq:dual-weigthed-avg-def}
\end{align}
Moreover, unifying the notation for interior and exterior faces, we also set
the jump and average operator
on any \emph{exterior} face $F$ belonging $\mcT_{h,i}$ to
\begin{align}
  \avg{v}|_{F} =  \jump{v}|_{F} = v.
\end{align}

To  write our cutDG formulation
in a compact way and to
facilitate the forthcoming numerical analysis, 
we introduce for $i=1,2$ the uncoupled, discrete bilinear forms
\begin{align}
  a_{h,i}(v_i,w_i) &= (\kappa_i \nabla v_i, \nabla w_i)_{\mcT_{h,i}
                     \cap \Omega_i}
                     - (\mean{\kappa_i\partial_n v}, \jump{w})_{\mcF_{h,i} \cap \Omega_i}
  \\
                   &\quad
                     - (\jump{v}, \mean{\kappa_i\partial_n w})_{\mcF_{h,i} \cap \Omega_i}
                     + \beta \kappa_i (h^{-1} \jump{v},\jump{w})_{\mcF_{h,i} \cap \Omega_i},
\end{align}
which involve only geometric quantities defined in the physical domain $\Omega_i$.
The corresponding combined form is then
\begin{align}
 a_{h,\Omega}(v,w) &= \sum_{i=1}^2 a_{h,i} (v_i, w_i),
\end{align}
and the coupling between the domains is encoded
in the interface related bilinear form
\begin{align}
  a_{h,\Gamma}(v,w) &=
                      - (\wavg{\kappa_i\partial_n v}, \jump{w})_{\Gamma}
                      - (\jump{v}, \wavg{\kappa_i\partial_n w})_{\Gamma}
                      + \beta_{\Gamma}(\kappa_1, \kappa_2) (h^{-1}\jump{v},\jump{w})_{\Gamma}.
\end{align}
To account for high contrast interface problems where the ratio
$ \tfrac{\kappa_1}{\kappa_2}$ can become arbitrary large or small, we
employ harmonic weights
following~\cite{Dryja2003,ErnStephansenZunino2008,BurmanZunino2012}
and set the weights $\omega_i$ and the stability parameter~$\beta_{\Gamma}$ to
\begin{gather}
  \omega_1 = \frac{\kappa_2}{\kappa_1+\kappa_2},
  \quad
  \omega_2 = \frac{\kappa_1}{\kappa_1+\kappa_2},
  \quad 
   \beta_{\Gamma}(\kappa_1, \kappa_2) = \widetilde{\beta}_{\Gamma} \dfrac{2 \kappa_1 \kappa_2}{\kappa_1 + \kappa_2},
 \end{gather}
 with $\widetilde{\beta}_{\Gamma}$ independent of $\kappa$.
 Alternative weights were proposed in
~\cite{BarrauBeckerDubachEtAl2012a,AnnavarapuHautefeuilleDolbow2012},
 see also Remark~\ref{rem:alternative-weights}.
Next, similar to the cutDG formulation~(\ref{eq-cutdg-formulation}) for the
 Poisson boundary problem, we introduce ghost penalty enhanced
 versions of $a_{h,i}$ by setting
\begin{align}
  A_{h,i}(v_i, w_i) &= a_{h,i}(v_i, w_i) + g_{h,i}(v_i, w_i) \quad i = 1,2,
\end{align}
and require that each $g_{h,i}$ individually satisfies the
Assumption~\ref{ass:ghost-penalty-coerc}--\ref{ass:inverse-estimate-jh}
with respect to $\Omega_i$ and $\mcT_{h,i}$.
Occasionally, we will also use
the short-hand notation
$ g_{h}(v, w) = \sum_{i=1}^2g_{h,i}(v_i, w_i) $.  Now the cutDG
formulation for the interface problem~\eqref{eq:interface-problem} is
to seek $u_h = (u_{h,1}, u_{h,2}) \in V_h$ such that
\begin{align}
  A_h(u_h,v) \coloneqq
  \sum_{i=1}^2 A_{h,i} (u_{h,i}, v_i)  + a_{h,\Gamma}(u_h, v)
  = l_h(v) 
\quad  \foralls v = (v_1, v_2) \in V_h,
 \label{eq:Ah-IP-def} 
\end{align}
where the linear form $l_h$ is given by
\begin{align}
    l_h(v) &=\sum_{i=1}^2 (f_i,v_i)_{\mcT_{h,i} \cap \Omega_i} -
    (g_D, \wavg{\kappa_i\partial_n v})_{\Gamma}
    + \beta_{\Gamma}(h^{-1}
    {g_D} , \jump{v})_{\Gamma}
    \\
    &\quad + ( g_N,\wavgd{v})_{\Gamma}
    - (g, \kappa_2\partial_n v)_{\partial\Omega} + \beta_{F,2} \kappa_2 (h^{-1}
    g,v)_{\partial\Omega}.
\end{align}

\subsection{Stability and convergence properties}
\label{sec:ip-stabilityprop}
As the theoretical investigation of the cutDG formulation of the Poisson interface problem
will heavily rest upon the numerical analysis presented in Section~\ref{sec:bound-value-probl},
we decompose the energy norm for the final formulation~(\ref{eq:Ah-IP-def})
into domain specific and interface specific contributions. To this end, we define the
discrete energy norms for $v_i \in V_{h,i}$, $i=1,2$ by
\begin{align}
  \tn v_i \tn_{a_{h,i}}^2 
&= \| \nabla v_i \|^2_{\mcT_{h,i} \cap \Omega_i}
  + \|h^{-1/2} [v_i] \|^2_{\mcF_{h,i} \cap \Omega_i},
  \\
  \tn  v_i  \tn^2_{A_{h,i}} &= \tn  v_i  \tn^2_{a_{h,i}} + |v|^2_{g_{h,i}},
\end{align}
and the discrete energy norms associated with the total bilinear form $a_h$ and
$A_h$ by
\begin{align}
  \tn  v  \tn^2_{a_{h}} &= \sum_{i=1}^2 \tn  v_i  \tn^2_{a_{h,i}} + \beta_{\Gamma} \|\jump{h^{-1} v}\|_{\Gamma}^2,
                          \\
  \tn  v  \tn^2_{A_{h}} &= \sum_{i=1}^2 \tn  v_i  \tn^2_{A_{h,i}} + \beta_{\Gamma} \|\jump{h^{-1} v}\|_{\Gamma}^2,
\end{align}
respectively. As an immediate result of the numerical analysis
presented in Section~\ref{ssec::stab-prop}, we record the following
corollary, stating that the bulk forms $a_{h,i}$ are coercive with
respect to $\tn \cdot \tn_{a_{h,i}}$.
\begin{corollary}
  For $v = (v_1,v_2) \in V_h$, it holds that
  \begin{align}
    \tn v_i \tn_{A_h,i}^2 \lesssim A_{h,i}(v_i, v_i) \quad i = 1,2.
  \end{align}
\end{corollary}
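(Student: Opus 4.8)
The plan is to observe that, for each fixed $i$, the form $A_{h,i}=a_{h,i}+g_{h,i}$ is structurally identical to the form $A_h=a_h+g_h$ analysed in Section~\ref{ssec::stab-prop} for the Poisson boundary value problem, after the purely cosmetic replacements of $\Omega$ by $\Omega_i$, of $\mcT_h$ by $\mcT_{h,i}$, and of $\mcF_h$ by $\mcF_{h,i}$, with a constant diffusion coefficient $\kappa_i>0$ carried through all terms and with the r\^ole of the (weakly imposed) Dirichlet boundary played by the exterior faces of $\mcT_{h,i}$, on which the convention $\avg{v}|_F=\jump{v}|_F=v$ reproduces exactly the Nitsche contributions. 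In particular $A_{h,i}(v_i,v_i)$ carries \emph{no} term along the interface $\Gamma$ — those live in $a_{h,\Gamma}$ — so the asserted bound is a genuinely per-subdomain statement in which the coupling form plays no role, and it suffices to rerun the coercivity argument~\eqref{eq:coercivity-Ah} of Proposition~\ref{prop:Ah-coercivity}.

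Concretely, I would first record that, since by hypothesis $g_{h,i}$ satisfies Assumption~\ref{ass:ghost-penalty-coerc} relative to $(\Omega_i,\mcT_{h,i})$, the corresponding instance of Corollary~\ref{cor:ghost-penalty-bulk} is available, namely
\begin{align}
  \| h^{\onehalf} \partial_n v_i \|_{\mcF_{h,i}\cap\Omega_i}^2
  \;\lesssim\;
  \| \nabla v_i \|_{\Omega_i}^2 + |v_i|_{g_{h,i}}^2
  \qquad \foralls v_i \in V_{h,i},
\end{align}
which is precisely what is needed to control the normal flux on the cut faces $\mcF_{h,i}\cap\Omega_i$ (the exterior-face part being handled identically, using~\eqref{eq:inverse-est-normal-flux} when $\partial\Omega$ is mesh-fitted, or~\eqref{eq:inverse-est-normal-flux-Gamma} together with Assumption~\ref{ass:ghost-penalty-coerc} otherwise). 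Testing~\eqref{eq:Ah-IP-def} subdomain-wise with $w_i=v_i$, expanding $A_{h,i}(v_i,v_i)=\kappa_i\|\nabla v_i\|_{\Omega_i}^2+|v_i|_{g_{h,i}}^2+\beta\kappa_i\|h^{-\onehalf}\jump{v_i}\|_{\mcF_{h,i}\cap\Omega_i}^2-2\kappa_i(\mean{\partial_n v_i},\jump{v_i})_{\mcF_{h,i}\cap\Omega_i}$, an $\epsilon$-Young inequality on the cross term followed by substitution of the displayed flux bound gives, after choosing $\epsilon$ small enough (depending on $\kappa_i$) and $\beta\gtrsim\epsilon^{-1}$,
\begin{align}
  A_{h,i}(v_i,v_i)
  \;\gtrsim\;
  \kappa_i\|\nabla v_i\|_{\Omega_i}^2
  + |v_i|_{g_{h,i}}^2
  + \kappa_i\|h^{-\onehalf}\jump{v_i}\|_{\mcF_{h,i}\cap\Omega_i}^2
  \;\gtrsim\;
  \tn v_i \tn_{A_{h,i}}^2 ,
\end{align}
where the last step simply absorbs the fixed factor $\kappa_i$ into the hidden constant.

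There is no genuine obstacle here; the only things demanding a little care are bookkeeping points: (i) that Assumptions~\ref{ass:ghost-penalty-coerc}--\ref{ass:inverse-estimate-jh} are imposed \emph{for each $i$ separately} with respect to $(\Omega_i,\mcT_{h,i})$, which is exactly what licenses invoking Corollary~\ref{cor:ghost-penalty-bulk} subdomain-wise; (ii) keeping the constant $\kappa_i>0$ in view throughout, so that the final absorption step is legitimate — hence the hidden constant in the corollary is allowed to depend on $\kappa_i$, which is harmless since $\kappa_i$ is a fixed positive number; and (iii) remembering that $\mcF_{h,i}$ also contains exterior faces, which are treated by the jump/average convention exactly as the face terms in $a_h$. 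Modulo these remarks the proof is verbatim that of Proposition~\ref{prop:Ah-coercivity}, so I would state it in one or two lines referring back to that argument.
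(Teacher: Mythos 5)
Your proposal is correct and follows exactly the route the paper intends: the corollary is stated there without proof as an ``immediate result'' of Section~\ref{ssec::stab-prop}, i.e.\ one reruns the coercivity argument of Proposition~\ref{prop:Ah-coercivity} subdomain-wise, using that each $g_{h,i}$ satisfies Assumption~\ref{ass:ghost-penalty-coerc} relative to $(\Omega_i,\mcT_{h,i})$ so that Corollary~\ref{cor:ghost-penalty-bulk} controls the cut-face fluxes, with the fixed factor $\kappa_i$ absorbed into the hidden constant. Your bookkeeping remarks (per-subdomain assumptions, $\kappa_i$-dependence of the constant, exterior faces handled by the jump/average convention) match the paper's setup, so nothing further is needed.
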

This observation allows us to give a short proof showing that the
total discrete form $A_h$ defined by~(\ref{eq:Ah-IP-def}) is coercive
in the discrete energy norm $\tn \cdot \tn_{A_h}$.
\begin{proposition}
  \label{prop:Ah-IF-coerciv}
  It holds that
  \begin{alignat}{3}
    \tn  v  \tn_{A_h}^2 &\lesssim A_h(v, v) && \quad \foralls v \in V_h.
    \label{eq:coercivity-Ah-IP}
  \end{alignat}
\end{proposition}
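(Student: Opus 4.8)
The plan is to follow the coercivity argument used for the boundary value problem in Proposition~\ref{prop:Ah-coercivity}, now treating each subdomain through the corollary on the coercivity of the bulk forms $A_{h,i}$ stated just above, and handling only the interface coupling form $a_{h,\Gamma}$ by hand. Writing
\begin{align*}
  A_h(v,v) = \sum_{i=1}^2 A_{h,i}(v_i,v_i) + a_{h,\Gamma}(v,v),
  \qquad
  a_{h,\Gamma}(v,v) = -2(\wavg{\kappa \partial_n v},\jump{v})_{\Gamma} + \beta_{\Gamma}\,\|h^{-\onehalf}\jump{v}\|_{\Gamma}^2,
\end{align*}
and recalling that $\sum_{i=1}^2 A_{h,i}(v_i,v_i) \gtrsim \sum_{i=1}^2 \tn v_i \tn_{A_{h,i}}^2$, the task reduces to absorbing the symmetric flux term $(\wavg{\kappa \partial_n v},\jump{v})_{\Gamma}$ into the bulk contributions $\sum_i A_{h,i}(v_i,v_i)$ together with the interface penalty $\beta_{\Gamma}\|h^{-\onehalf}\jump{v}\|_{\Gamma}^2$.

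The key new ingredient compared to the fitted-mesh interface analysis is the control of the normal flux along $\Gamma$. By the harmonic choice of the weights, both coefficients $\omega_1\kappa_1$ and $\omega_2\kappa_2$ appearing in $\wavg{\kappa\partial_n v}$ equal the half harmonic mean $\kappa_H \coloneqq \kappa_1\kappa_2/(\kappa_1+\kappa_2) = \beta_{\Gamma}/(2\widetilde{\beta}_{\Gamma})$, which in particular satisfies $\kappa_H \leqslant \min\{\kappa_1,\kappa_2\}$. Hence a triangle inequality followed by Corollary~\ref{cor:ghost-penalty-bulk}, applied separately on each subdomain $\Omega_i$ with its ghost penalty $g_{h,i}$ — legitimate since each $g_{h,i}$ is assumed to satisfy Assumption~\ref{ass:ghost-penalty-coerc} with respect to $\Omega_i$ and $\mcT_{h,i}$ — gives
\begin{align*}
  \| h^{\onehalf} \wavg{\kappa\partial_n v}\|_{\Gamma}^2
  \lesssim \kappa_H^2 \sum_{i=1}^2 \| h^{\onehalf}\partial_n v_i\|_{\Gamma}^2
  \lesssim \kappa_H^2 \sum_{i=1}^2 \bigl(\|\nabla v_i\|_{\Omega_i}^2 + |v_i|_{g_{h,i}}^2\bigr).
\end{align*}

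Then I would apply a suitably scaled $\epsilon$-Young inequality to the flux term, $2(\wavg{\kappa\partial_n v},\jump{v})_{\Gamma} \leqslant \epsilon\,\| h^{\onehalf}\wavg{\kappa\partial_n v}\|_{\Gamma}^2 + \epsilon^{-1}\|h^{-\onehalf}\jump{v}\|_{\Gamma}^2$, insert the previous bound, and use $\kappa_H \leqslant \kappa_i$ to see that the first resulting term is controlled by a constant times $\epsilon\,\kappa_H\sum_i A_{h,i}(v_i,v_i)$, while the second is of the form $\epsilon^{-1}\kappa_H^{-1}\cdot\kappa_H\|h^{-\onehalf}\jump{v}\|_{\Gamma}^2$. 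Since $\beta_{\Gamma} = 2\widetilde{\beta}_{\Gamma}\kappa_H$, the interface penalty already present in $a_{h,\Gamma}(v,v)$ dominates this second term once $\epsilon^{-1} < \beta_{\Gamma}$, i.e. once $\widetilde{\beta}_{\Gamma}$ is chosen large enough; choosing $\epsilon$ small enough simultaneously makes the first term absorbable into $\sum_i A_{h,i}(v_i,v_i)$. Both thresholds on $\epsilon$ scale like $\kappa_H^{-1}$, so they are compatible as soon as $\widetilde{\beta}_{\Gamma}$ is sufficiently large, and one concludes $A_h(v,v) \gtrsim \sum_i \tn v_i \tn_{A_{h,i}}^2 + \beta_{\Gamma}\|h^{-\onehalf}\jump{v}\|_{\Gamma}^2 = \tn v \tn_{A_h}^2$.

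The main obstacle is the bookkeeping of the diffusion weights: one must ensure that the constant lost when absorbing the $\epsilon$-Young term into the \emph{already consumed} bulk coercivity $\sum_i A_{h,i}(v_i,v_i)$, together with the constraint $\epsilon^{-1}<\beta_{\Gamma}$ needed to dominate the jump term, can be met by a single choice of $\epsilon$ and $\widetilde{\beta}_{\Gamma}$. This is exactly where the harmonic weighting pays off, since it forces the flux coefficient to be the harmonic mean $\kappa_H$ and makes all relevant thresholds scale consistently in $\kappa_H$ rather than in the (possibly large) contrast $\kappa_1/\kappa_2$. A minor point is that Corollary~\ref{cor:ghost-penalty-bulk} is invoked on each subdomain with trace and inverse constants that now also depend on the curvature of $\Gamma$, which is admissible under the standing geometric assumptions.
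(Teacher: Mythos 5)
Your proposal is correct and follows essentially the same route as the paper: split $A_h$ into the bulk forms $A_{h,i}$ (handled by the subdomain coercivity corollary) plus $a_{h,\Gamma}$, use the harmonic weighting to reduce the flux coefficient to $\kappa_1\kappa_2/(\kappa_1+\kappa_2)\leqslant\min\{\kappa_1,\kappa_2\}$, control $\|h^{\onehalf}\partial_n v_i\|_{\Gamma}$ via the cut trace estimate together with the $H^1$ extension property of $g_{h,i}$, and absorb via an $\epsilon$-Young inequality with $\epsilon$ small and $\widetilde{\beta}_{\Gamma}$ large. The only cosmetic difference is that you apply an unweighted Young inequality and then track the $\kappa_H$-scaling of the thresholds, whereas the paper distributes the harmonic weight inside the Young inequality directly; the two are equivalent up to reparametrizing $\epsilon$.
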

\begin{proof}
  Since 
  \begin{align}
    A_h(v, v) &= \sum_{i=1}^2 A_{h,i}(v_i,v_i) + a_{h,\Gamma}(v,v)
                \gtrsim \sum_{i=1}^2 \tn v_i \tn_{A_{h,i}}^2  + a_{h,\Gamma}(v,v),
                \label{eq:Ah-IP-coerc-step-1}
  \end{align}
  it only remains to treat the interface related contribution in~\eqref{eq:Ah-IP-coerc-step-1}.
  Recalling the definition of the weighted average $\wavg{\cdot}$,
  and successively applying an $\epsilon$-Young inequality and 
  the trace estimate~(\ref{eq:inverse-est-cut-grad-and-trace}), we deduce that
  \begin{align}
      2 (\wavg{\kappa \partial_n v}, \jump{v})_{\Gamma}
    &=
      2 ( \omega_1 \kappa_1 \partial_n v_1 + \omega_2 \kappa_2 \partial_n v_2, \jump{v})_{\Gamma}
      \\
    &=
      \dfrac{2\kappa_1 \kappa_2}{\kappa_1 + \kappa_2}
       ( \partial_n v_1 +\partial_n v_2, \jump{v})_{\Gamma}
    \\
    &\lesssim
      \epsilon \dfrac{\kappa_1 \kappa_2}{\kappa_1 + \kappa_2}
      (  \| h^{\onehalf} \partial_n v_1\|_{\Gamma}^2 
      +  \| h^{\onehalf} \partial_n v_2\|_{\Gamma}^2 )
      + \epsilon^{-1} \dfrac{2\kappa_1 \kappa_2}{\kappa_1 + \kappa_2}
      \| h^{-\onehalf} \jump{v}\|_{\Gamma}
    \\
    &\lesssim
      \epsilon \sum_{i=1}^2
      \kappa_i \| \nabla v_i\|_{\mcT_h}^2
      + \epsilon^{-1} \dfrac{2\kappa_1 \kappa_2}{\kappa_1 + \kappa_2}
      \| h^{-\onehalf} \jump{v}\|_{\Gamma}
    \\
    &\lesssim
      \epsilon \sum_{i=1}^2
      \tn v_i \tn_{A_{h,i}}^2
      + \epsilon^{-1} \dfrac{2\kappa_1 \kappa_2}{\kappa_1 + \kappa_2}
      \| h^{-\onehalf} \jump{v}\|_{\Gamma},
  \end{align}
  where in the last two steps, we used the fact that
  $ \tfrac{\kappa_1 \kappa_2}{\kappa_1 + \kappa_2} \leqslant
  \min\{\kappa_1, \kappa_2\}$ and the $H^1$ extension property of
  $g_{h,i}$.  Now recalling that
  $\beta_{\Gamma} = \widetilde{\beta}_{\Gamma} \tfrac{2\kappa_1
    \kappa_2}{\kappa_1 + \kappa_2}$, it follows that
  \begin{align}
    a_{h,\Gamma}(v,v)
    &=
      -2 (\wavg{\kappa \partial_n v} \jump{v})_{\Gamma}
      + \beta_{\Gamma} \| h^{-\onehalf}\jump{v}\|_{\Gamma}
    \\
    &\gtrsim
      - \epsilon \sum_{i=1}^2
      \tn v_i \tn_{A_{h,i}}^2
      +  (\widetilde{\beta}_{\Gamma} - \epsilon^{-1}) \dfrac{2\kappa_1 \kappa_2}{\kappa_1 + \kappa_2}
      \| h^{-\onehalf} \jump{v}\|_{\Gamma},
  \end{align}
  which together with~\eqref{eq:Ah-IP-coerc-step-1} gives the desired result for $\epsilon$ 
  small enough and $\widetilde{\beta}$ large enough.
  \qed
\end{proof}
\begin{remark}
  The previous proof reveals that the key role of the ghost penalty
  forms $g_{h,i}$ is to extend the control of the domain-related
  energy norms $\tn \cdot \tn_{a_{h,i}}$ from physical subdomains $\Omega_i$ to the
  respective active meshes $\mcT_{h,i}$.  As a consequence, we can 
  closely follow the standard derivation of coercivity
  results for discontinuous Galerkin methods for high contrast
  interface and heterogeneous diffusion problems presented in~\cite{DiPietroErn2012,BurmanZunino2012}.
\end{remark}
\begin{remark}
  \label{rem:alternative-weights}
  \citet{BarrauBeckerDubachEtAl2012a}
  and~\citet{AnnavarapuHautefeuilleDolbow2012} proposed an unfitted
  Nitsche-based formulation for large contrast interface problems
  without ghost penalties using the weights
  \begin{gather}
    \omega_1 =  \dfrac{\kappa_2 |T\cap \Omega_1|_{d}}{\kappa_2  |T\cap \Omega_1|_{d} + \kappa_1  |T\cap \Omega_2|_{d}},
    \qquad
    \omega_2 =  \dfrac{\kappa_1 |T\cap \Omega_2|_{d}}{\kappa_2  |T\cap \Omega_1|_{d} + \kappa_1  |T\cap \Omega_2|_{d}},
    \\
     \intertext{and instead of $\beta_{\Gamma}h^{-1}$, a stability parameter of the form}
    \beta^{*} =  \widetilde{\beta_{\Gamma}}\dfrac{\kappa_1 \kappa_2 |\Gamma \cap T|_{d-1}}{\kappa_2  |T\cap \Omega_1|_{d} + \kappa_1  |T\cap \Omega_2|_{d}}.
    \label{eq:alt-weights}
  \end{gather}
  was employed.
  By incorporating the area of the physical element parts
  $T\cap \Omega_i$, this choice of weights accounts for both the contrast in the diffusion
  coefficient and the particular cut configuration.
  Consequently, this technique is not completely robust in the most extreme cases,
  where both a large contrast and a bad cut configuration must be handled simultaneously.
  For instance, let us consider again the sliver cut case in Figure~\ref{fig:critical-cut-cases} (left)
  with the normal $n$ pointing outwards with respect to $\Omega_1$.
  For a fixed mesh and mesh size, we see that
  if $\kappa_2 \delta \geqslant \kappa_1 h$ or equivalently,
  $\tfrac{\kappa_2}{\kappa_1} \geqslant \tfrac{h}{\delta}$,
  the stability parameter scales like
  \begin{align}
  \dfrac{\kappa_1 \kappa_2 |\Gamma \cap T|_{d-1}}
    {\kappa_2  |T\cap \Omega_1|_{d} + \kappa_1  |T\cap \Omega_2|_{d}}
    \sim
  \dfrac{\kappa_1 \kappa_2 h^{d-1}}
    {\kappa_2  \delta h^{d-1} + \kappa_1  h^{d}}
    \geqslant
  \dfrac{\kappa_1 \kappa_2 h^{d-1}}
    {2\kappa_2  \delta h^{d-1}}
    =
  \dfrac{\kappa_1}
    {2\delta}
  \end{align}
  and thus it can become arbitrary large in case of large contrast
  and a bad cut configuration.  Using ghost penalty enhanced
  (continuous or discontinuous) unfitted finite element methods on the
  other hand, the stability parameter $\beta_{\Gamma}h^{-1}$ scales
  like $\beta_{\Gamma}h^{-1} \sim \min\{\kappa_1, \kappa_2\} h^{-1}$
  and thus is not affected by a large diffusion parameter
  $\kappa_2 \gg \kappa_1$ or a particular bad cut configuration with
  $\delta \ll h$.
\end{remark}

Thanks to the discrete coercivity estimate~\eqref{eq:coercivity-Ah-IP},
we can now follow the derivation in Section~\ref{ssec::apriori}
to prove optimal and geometrically robust a priori error estimates.
To do so, we first define a suitable interpolation operator similar as
in Section~\ref{ssec::apriori} by setting
\begin{align}
  \pi_h^e u = (\pi_{h,1} u_1^e, \pi_{h,2} u_2^e)    
\end{align}
where $\pi_{h,i} : L^2(\mcT_{h,i}) \to V_{h,i}$ for $i=1,2$.
Then we have the following result. 
\begin{theorem}
  Let $u = (u_1 , u_2) \in H^{s}(\Omega_1) \times H^{s}(\Omega_2)$
  be the solution to the interface problem~(\ref{eq:interface-problem})
  and let $u_h = (u_{h,1}, u_{h,2}) \in \PP_{k}(\mcT_{h,1}) \times \PP_k(\mcT_{h,2})$
  be the solution to the cut discontinuous Galerkin formulation~(\ref{eq:Ah-IP-def}).
  Setting $r = \min\{s, k+1\}$, it holds that
  \begin{align}
    \tn  u - u_h \tn_{a_h} 
    \lesssim
     \sum_{i=1}^{2} \kappa_1^{\onehalf} h^{r-1} \| u_i \|_{r, \Omega_i}.
  \end{align}
\end{theorem}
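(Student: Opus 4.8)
The strategy mirrors the a priori analysis of the Poisson boundary value problem in Section~\ref{ssec::apriori}, now carried out subdomain-wise and keeping careful track of the diffusion contrast. First I would introduce the interface analogue of the $\tn\cdot\tn_{a_h,\ast}$-norm, for instance
\begin{align*}
  \tn v \tn_{a_h,\ast}^2
  = \tn v \tn_{a_h}^2
  + \sum_{i=1}^2 \kappa_i \| h^{\onehalf}\avg{\partial_n v_i}\|_{\mcF_{h,i}\cap\Omega_i}^2
  + \tfrac{2\kappa_1\kappa_2}{\kappa_1+\kappa_2} \sum_{i=1}^2 \| h^{\onehalf}\partial_n v_i\|_{\Gamma}^2,
\end{align*}
which controls every normal-flux contribution in $a_{h,\Omega}$ and $a_{h,\Gamma}$. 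I would then split the error as $u - u_h = e_\pi + e_h$ with $e_\pi = u - \pi_h^e u$ and $e_h = \pi_h^e u - u_h \in V_h$, and use the triangle inequality $\tn u - u_h\tn_{a_h} \leqslant \tn e_\pi\tn_{a_h,\ast} + \tn e_h\tn_{A_h}$, so that it suffices to bound the two pieces separately.

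For the projection error I would apply the interpolation estimates \eqref{eq:interpol-est-cut-T}--\eqref{eq:interpol-est-cut-Gamma} on each $\Omega_i$ (with $\pi_{h,i}$ and the Sobolev extension of $u_i$), exactly as in Corollary~\ref{cor:projection-error}, and collect the scalar factor $\kappa_i$ in front of the bulk gradient and face-flux terms and the harmonic weight $\tfrac{2\kappa_1\kappa_2}{\kappa_1+\kappa_2}\leqslant\min\{\kappa_1,\kappa_2\}$ in front of the interface flux term. Since all of these weights are bounded by $\kappa_i$, this yields $\tn e_\pi\tn_{a_h,\ast} \lesssim \sum_{i=1}^2 \kappa_i^{\onehalf} h^{r-1}\|u_i\|_{r,\Omega_i}$.

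For the discrete error I would invoke the coercivity of $A_h$ from Proposition~\ref{prop:Ah-IF-coerciv}, the weak Galerkin orthogonality $a_h(u-u_h,v)=g_h(u_h,v)$ for all $v\in V_h$, and a continuity estimate for $a_h$ in the $\tn\cdot\tn_{a_h,\ast}$/$\tn\cdot\tn_{A_h}$ pairing. The Galerkin identity is obtained, as in Lemma~\ref{lem:weak-galerkin-orth}, by testing the interface problem~\eqref{eq:interface-problem} against $v$, integrating by parts subdomain-wise, and using the weighted-average product rule $\jump{ab}=\wavg{a}\jump{b}+\jump{a}\wavgd{b}$ together with $\jump{u}=g_D$ and $\jump{\kappa\partial_n u}=g_N$, which shows $a_h(u,v)=l_h(v)$. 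Then
\begin{align*}
  \tn e_h\tn_{A_h}^2
  \lesssim A_h(e_h,e_h)
  = a_h(\pi_h^e u - u, e_h) + g_h(\pi_h^e u, e_h)
  \lesssim \bigl(\tn\pi_h^e u - u\tn_{a_h,\ast} + |\pi_h^e u|_{g_h}\bigr)\tn e_h\tn_{A_h},
\end{align*}
where the Galerkin identity cancels the ghost-penalty inconsistency and a Cauchy--Schwarz inequality on the symmetric form $g_h$ handles the remaining term. Dividing by $\tn e_h\tn_{A_h}$ and bounding $\tn\pi_h^e u - u\tn_{a_h,\ast}$ by the projection-error estimate above, together with $|\pi_h^e u|_{g_h}=\bigl(\sum_i|\pi_{h,i}u_i^e|_{g_{h,i}}^2\bigr)^{1/2}$ estimated by the weak consistency Assumption~\ref{ass:ghost-penalty-consist} on each subdomain (again producing the $\kappa_i^{\onehalf}$ weights), gives the claimed bound.

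The only ingredient not already recorded in the excerpt is the continuity of $a_h$, but it follows exactly as \eqref{eq:continuity-ah}: the bulk forms $a_{h,i}$ are bounded by Cauchy--Schwarz combined with the cut trace and ghost-penalty estimates of Corollary~\ref{cor:ghost-penalty-bulk} applied on $\mcT_{h,i}$, while $a_{h,\Gamma}$ is bounded by Cauchy--Schwarz on $\Gamma$ after extracting the harmonic weight, just as in the coercivity argument of Proposition~\ref{prop:Ah-IF-coerciv}. \textbf{The main obstacle} is the $\kappa$-bookkeeping: one must arrange that every term — in particular the mixed interface coupling with the weighted averages $\wavg{\cdot}$, $\wavgd{\cdot}$ and the stability weight $\beta_\Gamma\sim\tfrac{2\kappa_1\kappa_2}{\kappa_1+\kappa_2}$ — is ultimately controlled by $\min\{\kappa_1,\kappa_2\}\leqslant\kappa_i$, so that no power of the contrast $\kappa_1/\kappa_2$ survives; this is precisely what the harmonic choice of weights is designed to achieve, and it is the source of the contrast- and geometry-robustness of the estimate.
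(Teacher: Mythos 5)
Your proposal is correct and follows essentially the same route as the paper's own (sketched) proof: split $u-u_h$ into projection and discrete error, use coercivity of $A_h$ together with weak Galerkin orthogonality and continuity in the $\tn\cdot\tn_{a_h,\ast}$/$\tn\cdot\tn_{A_h}$ pairing, estimate the projection error subdomain-wise, and control the interface coupling via the harmonic-weight bound $\tfrac{\kappa_1\kappa_2}{\kappa_1+\kappa_2}\leqslant\min\{\kappa_1,\kappa_2\}$. You in fact supply slightly more detail than the paper (e.g.\ the consistency argument behind $a_h(u,v)=l_h(v)$ via the identity $\jump{ab}=\wavg{a}\jump{b}+\jump{a}\wavgd{b}$ and the jump data $g_D$, $g_N$), which the paper leaves implicit.
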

\begin{proof}
  Following closely the derivation of the a priori estimate~(\ref{eq:apriori-est-energy}),
  we only sketch the proof. As before, by decomposing
  $ u - u_h = (u - \pi_h^e u) + (\pi_h^e u -    u_h) = e_{\pi} + e_h$,
  it is enough to focus on the discrete error $e_h$.
  Combining the discrete coercivity and the weak Galerkin orthogonality
  of $A_h$,
  we see that
  \begin{align}
    \tn  e_h  \tn_{A_h}^2
    &\lesssim
      \sum_{i=1}^2
      \bigl(
      a_{h,i} (e_{\pi, i}, e_{h,i}) + g_{h,i}(\pi^{e}_{h,i} u_i, e_{h,i})
      \bigr)
      + a_{h,\Gamma} (e_{\pi, i}, e_h)
      \\
    &\lesssim
      \sum_{i=1}^2 h^{r-1}\kappa_i^{\onehalf} \| u\|_{r,\Omega} \tn e_{h,i} \tn_{A_{h,i}}
      + a_{h,\Gamma} (e_{\pi}, e_h).
    \label{eq:discrete-error-est-step-last-IF}
  \end{align}
  Unwinding the definition of $a_{h,\Gamma}$ and
  recalling that
  $
  \omega_i \kappa_i \partial_n v_i
  =
  \tfrac{\kappa_1 \kappa_2}{\kappa_1 + \kappa_2} \partial_n v_i
  $
  and that
  $\tfrac{\kappa_1 \kappa_2}{\kappa_1 + \kappa_2} \leqslant \min\{\kappa_1, \kappa_2\}$,
  the remaining interface term can be estimated as in Corollary~\ref{cor:projection-error},
  \begin{align}
    a_{h,\Gamma} (e_{\pi}, e_h)
    &=
      - (\wavg{\kappa_i\partial_n e_{\pi}}, \jump{e_h})_{\Gamma}
      - (\jump{e_{\pi}}, \wavg{\kappa_i\partial_n e_{h}})_{\Gamma}
      + \widetilde{\beta}_{\Gamma} \dfrac{2 \kappa_1 \kappa_2}{\kappa_1 + \kappa_2} (h^{-1}\jump{e_{\pi}},\jump{e_h})_{\Gamma}
      \\
    &\lesssim
      \min\{\kappa_1, \kappa_2\}
      h^{r-1} \| u \|_{r, \Omega_1 \cup \Omega_2}
      \| e_h \|_{A_h}.
  \end{align}
\end{proof}
\subsection{Numerical examples}
\label{ssec:ifp:numerical-examples-ifp}
We conclude this work by briefly presenting a number of convergence
rates studies conducted for two and three-dimensional interface
problems with various contrast ratios.  Throughout the numerical
studies, we employ as ghost penalty the analog of $g_h^1$ defined
in~(\ref{eq:jh-def-face-based}) and set for $i=1,2$,
\begin{align}
  g_{h,i} (v,w)
  &\coloneqq \sum_{j=0}^k \sum_{F\in F_{h,i}^g} \gamma_j h_F^{2j-1} (\jump{\partialbar_n^j v}, \jump{\partialbar_n^j w})_F,
\end{align}
with the ghost penalty  faces 
\begin{align}
  \mcF_{h,i}^{g} &= \{ F = T^+ \cap T^- \in \mcF_{h,i} \st  T^+ \cap \Gamma \neq \emptyset \lor T^- \Gamma \neq \emptyset \}.
  \label{eq:faces-gp-ip}
\end{align}

\subsubsection{Convergence rate studies for 2D interface problems}
Based on geometry for the two-dimensional test case presented in Section~\ref{ssec:bvp-conv-analysis},
we now consider the interface problem~\eqref{eq:interface-problem}
where the domains $\Omega_1$ and $\Omega_2$ are given by
 \begin{align}
   \Omega_1 &= \{ (x,y) \in \RR^2 \st \phi(x,y) < 0 \} \quad \text{with }
              \phi(x,y) = \sqrt{x^2 + y^2} - r_0 - r_1 \cos(\atantwo(y,x)),
   \\
   \Omega_2 &= [-1.1, 1.1]^2 \setminus \Omega_1.
 \end{align}
 As before, we set $r_0 = 0.6$ and $r_1 = 0.2$.
 In our convergence study, we consider two cases.
 First, we define the simplest possible interface problem and set
 $\kappa_1 = \kappa_2 = 1$ and $g_D = g_N = 0$. We construct a manufactured solution
 based on the smooth analytical reference solution
 \begin{align}
   u_i (x,y) =  \cos(2 \pi x)\cos(2\pi y) + \sin(2\pi x)\sin(2\pi y) \quad i =1,2
 \end{align}
 and compute the right-hand side $f$ and the boundary data $g$ accordingly.
 In the second test case, a high contrast interface problem is
 considered, setting $\kappa_1 = 1$ and $\kappa_2 = 10^6$ and employing the analytical reference
 solutions
 \begin{align}
   u_1 (x,y) &=  \sin(\pi (x-y))\cos(\pi (x+y)),
               \\
   u_2 (x,y) &=  \frac{1}{\kappa_1} \sin(0.5 \pi (x+y))\cos(0.5 \pi (x+y)).
 \end{align}
 This time, the interface data $g_D$ and $g_N$ are non-trivial functions
 as the chosen combination of $u_1$ and $u_2$ results in discontinuities
 in both the solution and the normal flux across the interface.
 
 Following the presentation in Section~\ref{ssec:bvp-conv-analysis}, we conduct a convergence
 study for both cases, employing
 $V_{h}^p = \PP_{p}(\mcT_{h,1}) \times \PP_{p}(\mcT_{h,2})$ for
 $p=1,2,3$. The resulting convergence curves
 associated with the $\|\kappa^{\onehalf} \nabla (\cdot)\|_{L^2(\Omega_1 \cup \Omega_2)}$ norm
 are plotted in Figure~\ref{fig:if:convergence_plots} (left) and have the predicted optimal slopes.
 We also plot the convergence curves for the error measured in the
 $\| \cdot \|_{L^2(\Omega_1 \cup \Omega_2)}$ norm in Figure~\ref{fig:if:convergence_plots} (right),
 also showing optimal convergence rates for the chosen examples.
 
\begin{figure}[htb]
  \begin{subfigure}{1.0\textwidth}
  \begin{center}
    \includegraphics[width=0.47\textwidth,page=1]{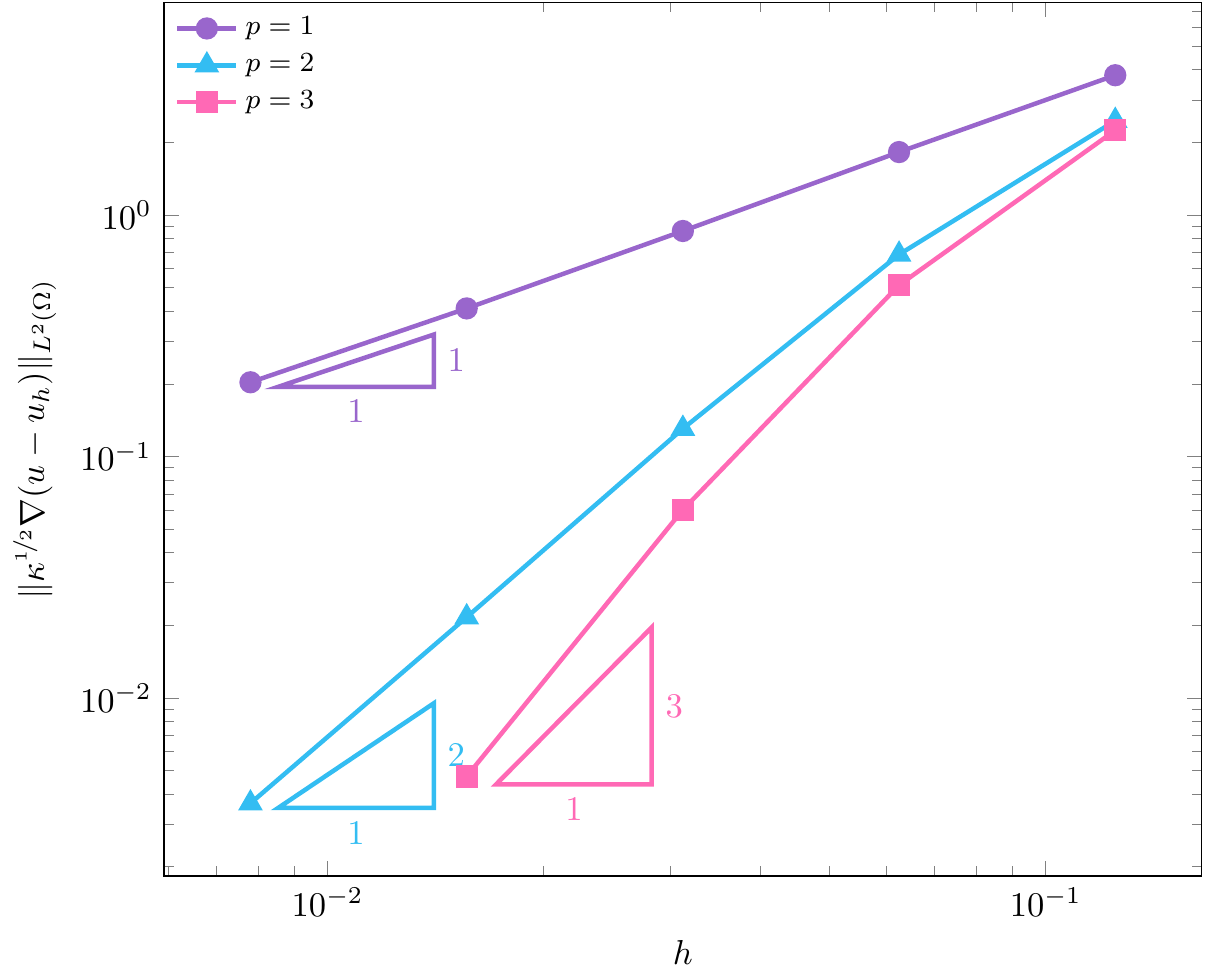}
    \hspace{0.01\textwidth}                                                 
    \includegraphics[width=0.47\textwidth,page=2]{figures/convergence_plots_ip_p1_to_p3_example_3_and_5.pdf}
  \end{center}
\end{subfigure}
  \begin{subfigure}{1.0\textwidth}
\vspace{2em}
  \begin{center}
    \includegraphics[width=0.47\textwidth,page=3]{figures/convergence_plots_ip_p1_to_p3_example_3_and_5.pdf}
    \hspace{0.01\textwidth}                                                 
    \includegraphics[width=0.47\textwidth,page=4]{figures/convergence_plots_ip_p1_to_p3_example_3_and_5.pdf}
  \end{center}
\end{subfigure}
\caption{Convergence rate plots for the first (top) and second (bottom) two-dimensional test cases. Both the
  $\| \kappa^{\onehalf} \nabla (\cdot)\|_{\Omega}$ (left) and $\|\cdot \|_{\Omega}$ (right) error plots
  show optimal convergence rates.}
  \label{fig:if:convergence_plots}
\end{figure}

 \subsubsection{Convergence rate studies for 3D interface problems}
In a final convergence study, we consider two test cases involving a three-dimensional interface problem
posed in the domain $\Omega_0 = [-1.1, 1.1]^3$.
For the first test problem, we define $\Omega_1$ as the union of $8$ balls $B_r(\bfx_k)$
with $r = 0.3$ and the~$8$ center points $\bfx_k = (x_k, y_k, z_k)$ given by $(\pm 0.5, \pm 0.5, \pm 0.5)$.
Thus
\begin{align}
  \Omega_1 &= \{ (x,y,z) \in \RR^3 \st \phi(x,y,z) < 1 \}, \quad \Omega_2 = \Omega_0 \setminus \Omega_1,
\end{align}
with the level set function $\phi$ being defined by
\begin{equation*}
  \phi (x_1,x_2,x_2) =  \min_{0\leqslant k \leqslant 7} {\sqrt{(x-x_k)^2+(y-y_k)^2+(z-z_k)^2}} - r.
\end{equation*}
We set $k_1 = k_2 = 1$ and  construct a manufactured solution
from
\begin{equation}
    u_1 = u_2 =\sin{(3\pi x)} +\sin{(3\pi y)} +\sin{(3\pi z)}  \quad \mbox{ in } \Omega_1 \cup \Omega_2,
    \label{eq:ip:soln-no-cont}
  \end{equation}
leading to a smooth solution and solution gradient across the
interface $\Gamma$ and thus $g_D= g_N=0$.

 \begin{figure}[htb]
   \begin{center}
     \begin{minipage}[c]{0.4\linewidth}
       \includegraphics[width=1.0\textwidth]{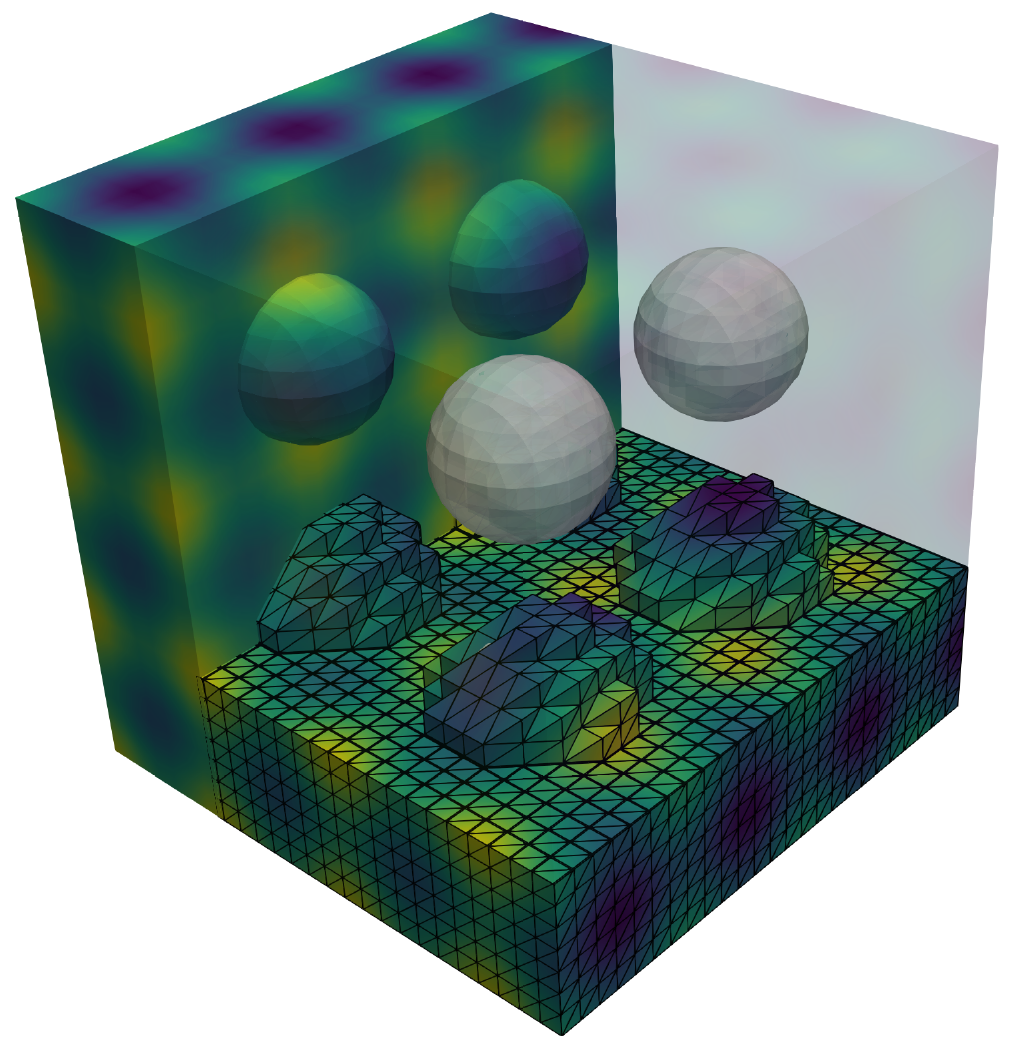}
     \end{minipage}
     \hspace{0.1cm}
     \begin{minipage}[c]{0.1\linewidth}
       \includegraphics[width=1.0\textwidth]{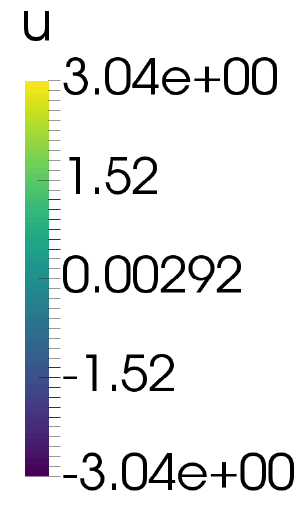}
     \end{minipage}
   \end{center}
   \caption{Solution plot for the first three-dimensional test case with
     no contrast and homogeneous jump conditions.
     The computed solution shows as expected a smooth transition across the interface.}
   \label{fig:sol_if_3d_no_constrast}
 \end{figure}
In the second test case, we place $8$ balls of radius $r_k=0.8$,
$k=0,\ldots 7$ at the $8$ corner points $(\pm 1, \pm 1, \pm 1)$ of
$\Omega_0$ and add a cylinder with radius $r_8 = 0.6$ centered around
$x$-axis. Using the standard notation $\delta_{ij}$ with  $\delta_{ij} = 1$ if $i=j$ and $0$ else,
we can define the level set function $\phi$ and corresponding domains $\Omega_1$ and $\Omega_2$ by
\begin{gather}
  \phi (x_1,x_2,x_2) =  \min_{0\leqslant k \leqslant 8} {\sqrt{(x-x_k)^2\delta_{k8} +(y-y_k)^2+(z-z_k)^2}} - r_k,
  \\
  \Omega_1 = \{ (x,y,z) \in \Omega_0 \st \phi(x,y,z) < 0\}, \quad \Omega_2 = \Omega_0 \setminus \Omega_1,
\end{gather}
respectively.
This time, we construct a manufactured solution with solution jumps
and gradient jumps across the interface by setting
\begin{alignat}{3}
    u_1 &= 1/\kappa_1(\sin{(3\pi x)} +\sin{(3\pi y)} +\sin{(3\pi z)}), \quad  &&\kappa_1=1  && \quad \mbox{ in } \Omega_1,  \\
    u_2 &=  1/\kappa_2(\cos{(3\pi x)} +\cos{(3\pi y)} +\cos{(3\pi z)}), \quad  &&\kappa_2= 10  && \quad \mbox{ in } \Omega_2.
    \label{eq:ip:soln-high-cont}
  \end{alignat}

\begin{figure}[htb]
  \begin{subfigure}{1.0\textwidth}
    \begin{center}
      \begin{minipage}[T]{0.47\linewidth}
        \vspace{0pt}
        \includegraphics[width=1.0\textwidth]{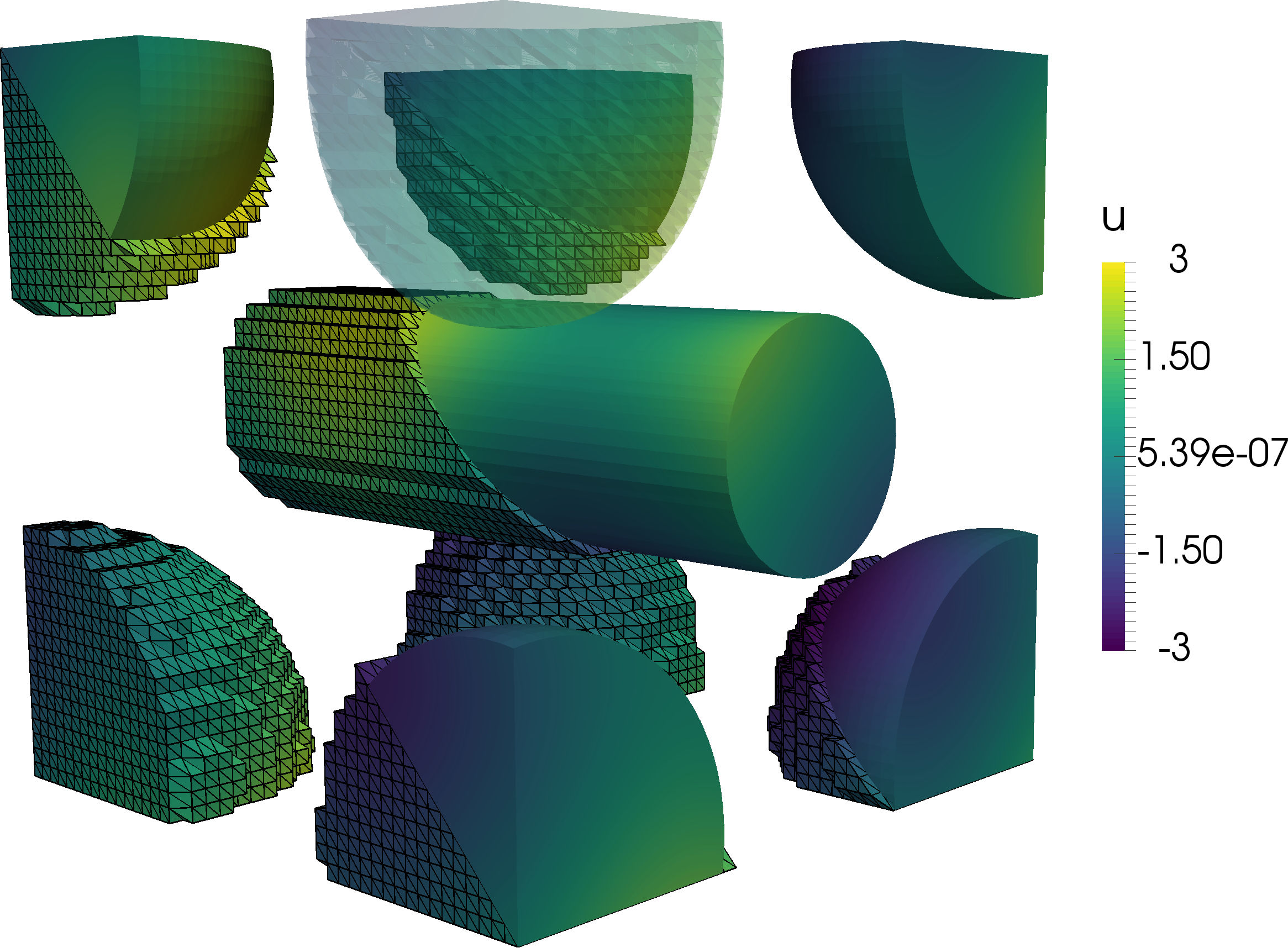}
      \end{minipage}
      \begin{minipage}[T]{0.47\linewidth}
        \vspace{0pt}
        \includegraphics[width=1.0\textwidth]{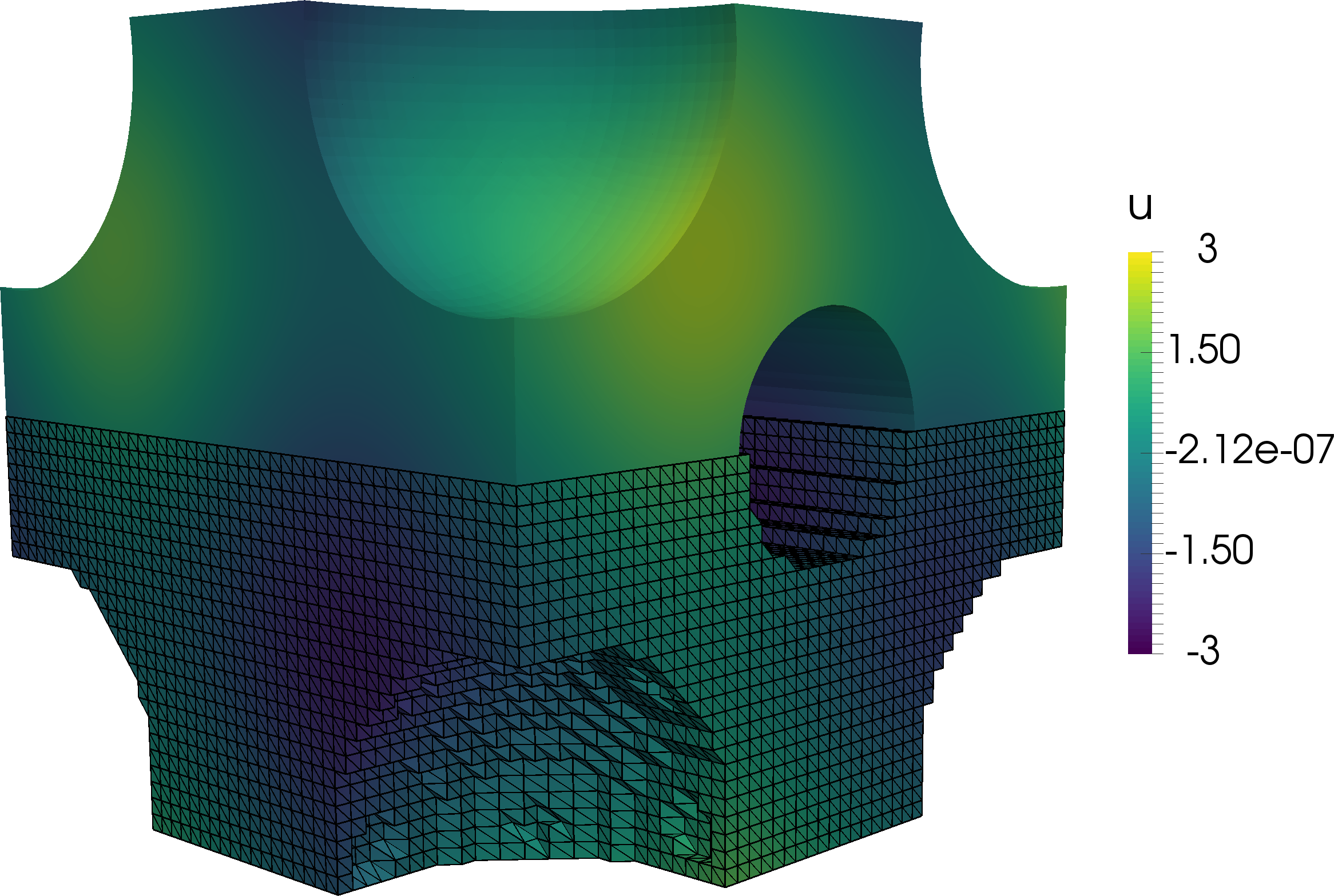}
      \end{minipage}
    \end{center}
  \end{subfigure}
   \caption{Solution plot for the second three-dimensional test case with
     a mild contrast and inhomogeneous jump conditions.
     The plots for $u_1$ (left) and $u_2$ (right)
     show the computed solution on parts of the physical domain as well as on its corresponding
     active mesh.}
      \label{fig:sol_if_3d_mild_constrast}
\end{figure}
Focusing on piecewise linear approximations,
we conduct a convergence study using $V_h = \PP_1(\mcT_{h,1}) \times \PP_1(\mcT_{h,2})$
on  a series of background  meshes $\{\mcT_k\}_{k=1}^7$
with mesh size $h_k = 2.2/N_k$ and $N_k \in \{6, 9, 12, 18, 24, 36 , 48\}$.
The computed EOC for both three-dimensional test cases are summarized in
Table~\eqref{tab:convanalysis_if_3d} and show optimal convergence
with respect to the $\|\kappa^{\onehalf}\nabla(\cdot)\|_{L^2(\Omega_1 \cup \Omega_2)}$ and
$\| \cdot \|_{L^2(\Omega_1 \cup \Omega_2)}$ norm. Solution plots
for the first and second test examples can be found in Figure~\ref{fig:sol_if_3d_no_constrast}
and~\ref{fig:sol_if_3d_mild_constrast}.

\begin{table}[htb]
  \begin{center}
    \begin {tabular}{r<{\pgfplotstableresetcolortbloverhangright }@{}l<{\pgfplotstableresetcolortbloverhangleft }|r<{\pgfplotstableresetcolortbloverhangright }@{}l<{\pgfplotstableresetcolortbloverhangleft }cr<{\pgfplotstableresetcolortbloverhangright }@{}l<{\pgfplotstableresetcolortbloverhangleft }r<{\pgfplotstableresetcolortbloverhangright }@{}l<{\pgfplotstableresetcolortbloverhangleft }|r<{\pgfplotstableresetcolortbloverhangright }@{}l<{\pgfplotstableresetcolortbloverhangleft }cr<{\pgfplotstableresetcolortbloverhangright }@{}l<{\pgfplotstableresetcolortbloverhangleft }c}%
\toprule \multicolumn {2}{c|}{$N_k$}&\multicolumn {2}{c}{$\|\kappa ^{\onehalf } \nabla e_k^1 \|_{\Omega }$}&EOC&\multicolumn {2}{c}{$\|e_k^1\|_{\Omega }$}&\multicolumn {2}{c|}{EOC}&\multicolumn {2}{c}{$\|\kappa ^{\onehalf } \nabla e_k^1 \|_{\Omega }$}&EOC&\multicolumn {2}{c}{$\|e_k^1\|_{\Omega }$}&EOC\\\midrule %
$6$&$$&$3.68$&$\cdot 10^{1}$&--&$4.20$&$\cdot 10^{0}$&--&&$1.06$&$\cdot 10^{1}$&--&$6.93$&$\cdot 10^{-1}$&--\\%
$9$&$$&$2.51$&$\cdot 10^{1}$&\pgfutilensuremath {0.94}&$2.47$&$\cdot 10^{0}$&$1$&$.31$&$6.94$&$\cdot 10^{0}$&\pgfutilensuremath {1.04}&$3.46$&$\cdot 10^{-1}$&\pgfutilensuremath {1.71}\\%
$12$&$$&$1.98$&$\cdot 10^{1}$&\pgfutilensuremath {0.83}&$1.76$&$\cdot 10^{0}$&$1$&$.19$&$5.20$&$\cdot 10^{0}$&\pgfutilensuremath {1.01}&$1.96$&$\cdot 10^{-1}$&\pgfutilensuremath {1.97}\\%
$18$&$$&$1.29$&$\cdot 10^{1}$&\pgfutilensuremath {1.05}&$9.09$&$\cdot 10^{-1}$&$1$&$.62$&$3.44$&$\cdot 10^{0}$&\pgfutilensuremath {1.02}&$8.45$&$\cdot 10^{-2}$&\pgfutilensuremath {2.08}\\%
$24$&$$&$9.48$&$\cdot 10^{0}$&\pgfutilensuremath {1.08}&$5.24$&$\cdot 10^{-1}$&$1$&$.92$&$2.57$&$\cdot 10^{0}$&\pgfutilensuremath {1.01}&$4.66$&$\cdot 10^{-2}$&\pgfutilensuremath {2.07}\\%
$36$&$$&$6.20$&$\cdot 10^{0}$&\pgfutilensuremath {1.05}&$2.36$&$\cdot 10^{-1}$&$1$&$.97$&$1.73$&$\cdot 10^{0}$&\pgfutilensuremath {0.98}&$2.01$&$\cdot 10^{-2}$&\pgfutilensuremath {2.07}\\%
$48$&$$&$4.63$&$\cdot 10^{0}$&\pgfutilensuremath {1.02}&$1.32$&$\cdot 10^{-1}$&$2$&$.01$&$1.29$&$\cdot 10^{0}$&\pgfutilensuremath {1.02}&$1.11$&$\cdot 10^{-2}$&\pgfutilensuremath {2.06}\\\bottomrule %
\end {tabular}%

    \caption{Convergence rates for the first (left) and second (right) three-dimensional interface test problem using $\PP_1(\mcT_h)$.}
  \label{tab:convanalysis_if_3d}
  \end{center}
\end{table}

\section*{Acknowledgments}
The authors gratefully acknowledge financial support 
from the Kempe Foundation Postdoc Scholarship JCK-1612
and from the Swedish Research Council under Starting Grant 2017-05038.

\bibliographystyle{elsarticle-num-names.bst}
\bibliography{bibliography}

\end{document}